\documentclass[10pt,a4paper,reqno, oneside]{amsart}
\usepackage[foot]{amsaddr}
\usepackage{graphicx}

\makeatletter
\renewcommand\subsection{\@startsection{subsection}{1}
	\z@{.5\linespacing\@plus.7\linespacing}{-.5em}
	{\normalfont\scshape}}
\makeatother

\RequirePackage[utf8]{inputenc}
\RequirePackage[T1]{fontenc}
\RequirePackage[english]{babel}

\RequirePackage[toc, page]{appendix}

\RequirePackage[left=3cm,right=5cm,top=4cm,bottom=4cm]{geometry}
\RequirePackage{amsmath, bbm, amsfonts, amssymb, amsthm} % packages for mathematics
\RequirePackage{hyperref, verbatim}
\RequirePackage{enumerate}  % enumerate with custom symbols
\RequirePackage{xcolor, extarrows}
\RequirePackage{graphicx, tikz, tikz-cd}
\RequirePackage[autostyle=true]{csquotes} % for quotation
\RequirePackage{aligned-overset}
\RequirePackage{breakcites}
\RequirePackage{breqn}
\RequirePackage{url}
\RequirePackage{enumitem}
\usepackage{nomencl}
\newcommand{\ml}{\left\{ } % left set bracket
\newcommand{\mr}{\right\} } % right set bracket
\newcommand{\setv}[2]{\left\{ #1 \,\middle|\, #2 \right\}} % set with bracket and vertical line
\newcommand{\kl}{\left(} % left round bracket
\newcommand{\kr}{\right)} % right round bracket
\newcommand{\abs}[1]{\left| #1 \right|} % absolute value in one command
\newcommand{\il}{\left [} % left square bracket
\newcommand{\ir}{\right ]} % right square bracket
\newcommand{\floor}[1]{\left\lfloor #1 \right\rfloor} % floor
\newcommand{\N}{\mathbb{N}} % natural numbers
\newcommand{\Z}{\mathbb{Z}} % whole numbers
\newcommand{\R}{\mathbb{R}} % real numbers
\newcommand{\C}{\mathbb{C}} % complex numbers
\newcommand{\symdiff}{\bigtriangleup} % symmetric difference

\newcommand{\Per}{\mathrm{Per}}	% Periodic positions of Toeplitz sequence
\newcommand{\tm}{\subseteq}
\newcommand{\mt}{\supseteq}

\newcommand{\htop}{h_{\mathrm{top}}}

\newtheorem{thm}{Theorem}[section]
\newtheorem{lem}[thm]{Lemma}
\newtheorem{prop}[thm]{Proposition}

\newtheorem{cor}[thm]{Corollary}

\newtheorem*{thm*}{Theorem}

\newtheoremstyle{named}{}{}{\itshape}{}{\bfseries}{.}{.5em}{\thmnote{#3}}
\theoremstyle{named}
\newtheorem*{namedtheorem}{Theorem}

\theoremstyle{definition}

\newtheorem{rem}[thm]{Remark}

\newenvironment{construction}[1][Construction]
	{\begin{proof}[#1]
		}
	{\end{proof}}

\newcommand{\1}{\mathbbmss{1}} % 1 with double stroke

\newcommand{\ol}[1]{\overline{#1}} % overline

\numberwithin{equation}{section}

\title{Amorphic Complexity and Entropy for Symbolic Model Sets}
\author{Jamal Drewlo}
\address{Institute of Mathematics, Friedrich Schiller University Jena, 07743 Jena, Germany}
\email{jamal.drewlo@uni-jena.de}

\begin{document}

\begin{abstract}
We show a continuity result for the Weyl pseudometric on subshifts which are generated by model sets.
This fact is then used for multiple constructions of subshifts that exhibit different behavior regarding entropy, amorphic complexity and their maximal equicontinuous factor.\\[2pt]
\noindent{\em 2020 Mathematics Subject Classification.} 37B05 (primary), 37B10 (secondary).
\end{abstract}

\maketitle

\tableofcontents

\section{Introduction}
The \textit{cut and project method} is one of the most versatile and powerful construction schemes in the study of aperiodic order and mathematical quasicrystals.
Since their introduction by Meyer in \cite{Me1972Algebraic}, cut and project schemes have been a focal point in numerous studies -- especially on their dynamical properties (see \cite{Sc2000GeneralizedModel, LaPl2003Repetitive, BaLeMo2007Model, BjHaPo2018AperiodicOrder, JaLeOe2019ModelPositive, FuGlJaOe2021IrregularModel} to name a few).
In this work, we restrict our viewpoint to specific cut and project schemes of the form $(G,H,\mathcal{L})$, where $G$ is a countably infinite, amenable group, $H$ is a metrizable group compactification of $G$ with embedding $\tau: G \to H$.
Furthermore, the \textit{lattice} $\mathcal{L}\tm G\times H$ is given by the graph of $\tau$, i.e.\ $\mathcal{L}= \{(g,\tau(g)): g\in G\}$.
In this setting, one can naturally associate to any subset $W\tm H$ a $G$-array $x_W := \1_W\circ \tau \in \{0,1\}^G$, which shall be called a \textit{symbolic model set}.
Here, $\1_G$ denotes the indicator function of $W$.\\
Our first goal is to examine symbolic model sets and their associated subshifts with respect to the so-called \textit{Besicovitch} and \textit{Weyl pseudometric}.
Both these pseudometrics represent a notion of average distance and -- especially in the context of symbolic dynamics -- present classical objects of study (see for example \cite{DoIw1988QuasiUniform, CaFoMaMa1997Besicovitch, BlFoKu1997Cellular, LackaStraszak2018QuasiUniform}).
Regarding the first goal, we make the following fairly straightforward observation.
\begin{thm}\label{thm: properties Phi intro}
    The set $\mathfrak{B}(H)$ of Borel-measurable subsets of $H$ can be naturally equipped with pseudometrics $D$ and $\ol{D}$, given by
    \[ D(A,B) = \nu(A\symdiff B) \text{ and } \ol{D}(A,B) = \nu(\ol{A\symdiff B}), \]
    where $\nu$ is the normalized Haar measure on $H$.
    Moreover, let $\mathcal{F}$ be a F\o lner sequence in $G$ and equip $\{0,1\}^G$ with the Besicovitch pseudometric $D_{\mathcal{F}}$.
    Then
\begin{enumerate}
	\item The map $\Phi: \mathfrak{B}(H)\to \{0,1\}^G, \:W \mapsto x_W$ is continuous w.r.t.\ $\ol{D}$ and $D_{\mathcal{F}}$.
	\item Restricting $\Phi$ to the regular windows in $H$ yields an isometry between $D$ and $D_{\mathcal{F}}$.
\end{enumerate}
Moreover, (1) and (2) also hold with $D_{\mathcal{F}}$ replaced by the Weyl pseudometric $D_{\mathrm{Weyl}}$.
\end{thm}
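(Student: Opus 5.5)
The key identity is $x_A \symdiff x_B = x_{A\symdiff B}$ (pointwise, identifying $\{0,1\}$-arrays with subsets of $G$). Hence
\[ D_{\mathcal{F}}(x_A,x_B) = \limsup_{n\to\infty} \frac{|\{g\in F_n : \tau(g)\in A\symdiff B\}|}{|F_n|}, \]
and similarly for $D_{\mathrm{Weyl}}$, where the $\limsup$ is taken over $n$ together with the supremum over all translates $F_n g$ (or the $\limsup$ over all Følner sequences, depending on the definition used). So everything reduces to estimating asymptotic frequencies of visits of $\tau(g)$ to a Borel set $E = A\symdiff B$. That $D$ and $\ol{D}$ are pseudometrics is immediate: $D$ from $A\symdiff C\tm (A\symdiff B)\cup(B\symdiff C)$ and subadditivity of $\nu$; $\ol{D}$ from the same inclusion, since the closure of a union is the union of closures.

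The main tool will be uniform distribution of $(\tau(g))_{g\in G}$ in $H$. Since $H$ is a compact group compactification with $\tau(G)$ dense, the $G$-action $h\mapsto \tau(g)h$ is minimal and uniquely ergodic (a minimal group rotation) with Haar measure $\nu$. I will use the standard consequence that, for every continuous $f$ on $H$, the averages $\frac{1}{|F_n|}\sum_{g\in F_n} f(\tau(g)h)$ converge to $\int f\,d\nu$ \emph{uniformly in $h$}. Uniformity in $h$ also absorbs translates of $F_n$, because translating $F_n$ amounts to changing $h$; this is what handles the Weyl case. By the Portmanteau argument (Urysohn functions squeezed around closed and open sets), this gives
\[ \limsup \frac{|\{g\in F_n: \tau(g)\in C\}|}{|F_n|} \le \nu(C) \]
for closed $C$, and $\liminf \ge \nu(U)$ for open $U$, uniformly over translates.

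For (1), apply the closed-set bound to $C=\ol{A\symdiff B}$ to get $D_{\mathcal{F}}(x_A,x_B)\le D_{\mathrm{Weyl}}(x_A,x_B)\le \ol{D}(A,B)$. So $\Phi$ is $1$-Lipschitz, hence continuous. For (2), with $A,B$ regular (closures of their interiors with null boundary), $E=A\symdiff B$ satisfies $\mathrm{int}(E)\supseteq \mathrm{int}A\symdiff \mathrm{int}B$ up to the boundaries. More precisely, $\ol{E}\setminus\mathrm{int}(E)\tm \partial A\cup\partial B$, which is $\nu$-null, so $\nu(\mathrm{int}E)=\nu(\ol E)=\nu(E)$. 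The lower bound from the open set $\mathrm{int}E$ and the upper bound from $\ol E$ then coincide, so the limit exists and equals $\nu(A\symdiff B)$ for both $D_{\mathcal F}$ and $D_{\mathrm{Weyl}}$. This gives the isometry with $D$.

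The main obstacle is justifying the uniform convergence of Følner averages over arbitrary translates for a general amenable $G$ and an arbitrary (not necessarily tempered) Følner sequence. I would prove it directly rather than by pointwise ergodic theorems. Approximate a continuous $f$ uniformly by linear combinations of matrix coefficients of irreducible unitary representations (Peter–Weyl). For a nontrivial irreducible $\pi$, the invariance $|F_n\gamma\symdiff F_n|/|F_n|\to 0$ yields $\frac1{|F_n|}\sum_{g\in F_n}\pi(\tau(g))\to 0$ in operator norm. This works because any limit point $P$ satisfies $P\pi(\tau(\gamma))=P$ for all $\gamma$, hence $P=0$ by density of $\tau(G)$ and irreducibility. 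The norm bound is independent of $h$, which gives the uniformity. Care is needed on the side of the Følner condition (left versus right), matching it to how translates enter the Weyl pseudometric. For abelian $G$ this is just the character argument.
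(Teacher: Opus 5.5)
Your proposal is correct and follows essentially the same route as the paper. You reduce to visit frequencies of $\tau(g)$ in $A\symdiff B$, bound them above by $\nu(\ol{A\symdiff B})$ and below by $\nu(\mathrm{int}(A\symdiff B))$ via Urysohn functions and uniform equidistribution, use $\partial(A\symdiff B)\tm\partial A\cup\partial B$ for regular windows, and obtain the Weyl case because these bounds hold for every F\o lner sequence. The only differences are minor. The paper simply cites the Uniform Ergodic Theorem for the uniquely ergodic rotation $(H,\rho,G)$ rather than your Peter--Weyl derivation, which is valid and in fact works for either side of the F\o lner condition. Also, in the paper ``regular'' means only $\nu(\partial W)=0$, not closure of interior; that null-boundary property is all your argument uses anyway.
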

It turns out that there is an intimate connection between the Weyl (resp.\ Besicovitch) pseudometric and topological entropy (resp.\ amorphic complexity) \cite{LackaStraszak2018QuasiUniform, FuGrJa2016Amorphic}.
Both topological entropy and amorphic complexity are means of measuring the chaoticity of a dynamical system.
However, while entropy takes finite values even on systems which can be considered fairly chaotic (e.g.\ the full shift) amorphic complexity is finite-valued only for systems of low complexity (in particular of zero entropy).
A natural question to ask is what values those quantities can attain for a specific subclass of systems.
For example, in \cite{Ku2024Amorphic} it is shown that any non-negative, possibly infinite value can be obtained by a (not necessarily minimal) $\Z$-subshift on two symbols.
In this paper, we aim to give answers to the following two questions:
\begin{itemize}
    \item What values of amorphic complexity can be obtained with a regular Toeplitz $G$-subshift?
    \item What values of topological entropy can be obtained with an almost automorphic $G$-subshift?
\end{itemize}
In order to answer both questions, we will use the fact that the class of symbolic model sets is versatile enough to construct a vast range of almost automorphic (Toeplitz) subshifts.
Regarding the first question, we are able to show for a large class of groups $G$ that any finite value greater or equal to $1$ can be attained.
\begin{thm}\label{thm: realize ac Toeplitz intro}
    Let $G$ be a finitely generated, torsion-free, nilpotent group and let $a\in [1,\infty)$.
    Then there exists a regular Toeplitz array $x\in \{0,1\}^G$ whose Toeplitz subshift $(\ol{O_G(x)},G)$ admits amorphic complexity of value $a$.
\end{thm}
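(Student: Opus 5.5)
We plan to realize $x$ as a symbolic model set $x_W=\1_W\circ\tau$, where $H$ is an odometer-type compactification of $G$ and $W\tm H$ is a carefully chosen window. For $G$ finitely generated, torsion-free and nilpotent we fix a decreasing chain of finite-index normal subgroups $G=\Gamma_0\supseteq\Gamma_1\supseteq\dots$ with $\bigcap_n\Gamma_n=\{e\}$ and $[\Gamma_{n}:\Gamma_{n+1}]$ controlled. A natural choice is $\Gamma_n=\{g^{p^n}\}$-generated subgroups, or, via a Malcev basis, subgroups whose index grows like $p^{n\cdot d}$, with $d$ the Hirsch length or the homogeneous dimension. We let $H=\varprojlim G/\Gamma_n$ be the associated odometer. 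It is known that $x_W$ is a regular Toeplitz array whenever $W$ is a regular window: $W$ should be a closed set with $\nu(\partial W)=0$ that is a union of cylinder sets plus a boundary part, with $\tau(G)\cap\partial W=\emptyset$, so that every $g$ lies in an open cylinder contained in $W$ or in $H\setminus W$. The subshift $\ol{O_G(x_W)}$ is then almost automorphic over $H$.

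The first step computes amorphic complexity through the window. By Theorem~\ref{thm: properties Phi intro}(2), $D_{\mathcal F}(x_{W+h},x_{W+h'})=\nu((W+h)\symdiff(W+h'))$ for $h,h'$ in the relevant set, and the same holds with $D_{\mathrm{Weyl}}$. For almost automorphic systems whose factor map is injective on a residual set, the separation numbers in the definition of amorphic complexity can be computed on the factor: two points $\delta$-separate exactly when $\nu(W\symdiff(W+h-h'))\ge\delta$. We therefore reduce to estimating
\[
\mathrm{ac}=\lim_{\delta\to0}\frac{\log N(\delta)}{-\log \delta},
\]
where $N(\delta)$ is the maximal number of points of $H$ that are pairwise $\delta$-separated with respect to the pseudometric $d_W(h,h')=\nu(W\symdiff(W+h-h'))$. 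Finitely many exceptional orbits do not matter by the continuity statement (1).

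The second step chooses $W$. We take $W=H\setminus\bigcup_n U_n$, where $U_n$ is a union of $k_n$ cylinders of level $\ell_n$ placed in distinct cosets, with total measure $\sum_n\nu(U_n)<1$, which makes $W$ regular. For a shift $h$ that is nontrivial modulo $\Gamma_m$ but trivial modulo $\Gamma_{m-1}$, $d_W(0,h)$ is comparable to the measure of the holes at levels $\ge m$, namely $r_m:=\sum_{n\ge m}\nu(U_n)$. Hence $N(\delta)\approx[G:\Gamma_{m(\delta)}]$, where $m(\delta)$ is the largest $m$ with $r_m\gtrsim\delta$. It then suffices to tune $r_m$ so that $\log[G:\Gamma_m]/(-\log r_m)\to a$, for example $r_m\approx[G:\Gamma_m]^{-1/a}$. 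This requires $r_m\ge 1/[G:\Gamma_m]$ (roughly, the holes cannot be smaller than one cylinder), and that constraint is exactly where $a\ge1$ enters. Polynomial growth of the indices in nilpotent groups, $[\Gamma_m:\Gamma_{m+1}]$ bounded, keeps consecutive scales comparable, so the limit exists rather than oscillating between $\liminf$ and $\limsup$.

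The main obstacle is the lower and upper bound matching in the second step. The upper bound needs $d_W(0,h)\lesssim r_m$ uniformly, and the lower bound needs $d_W(0,h)\gtrsim r_m$ for every $h\notin\Gamma_m$-cosets. Both require the holes at level $n$ to be spread so that every nontrivial translate moves a fixed fraction of them off themselves. Our first attempt is to place the level-$n$ holes in a single coset-representative pattern that is a "Sidon-like" subset of $G/\Gamma_n$: a small set whose translates intersect it in at most half its size. Such sets exist in finite groups, for instance any random subset of small density works. This gives $d_W(0,h)\ge\frac12\sum_{n\ge m}\nu(U_n)$ directly.
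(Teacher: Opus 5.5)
\textbf{There is a genuine gap, in your second step.} The claim that $\sum_n\nu(U_n)<1$ makes $W=H\setminus\bigcup_n U_n$ regular is false. Worse, the hole pattern you propose for the lower bound destroys regularity.

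For $a>1$ your scaling $r_m\approx[G:\Gamma_m]^{-1/a}$ forces $\nu(U_m)\approx r_m-r_{m+1}\gtrsim r_m$. Hence there are $k_m\approx[G:\Gamma_m]^{1-1/a}\to\infty$ hole cylinders at level $m$. If these are spread over all of $G/\Gamma_m$ (random, or ``Sidon-like'' in the whole quotient), then every cylinder of every fixed level eventually contains a hole. So $\bigcup_n U_n$ is dense, and $W$ is a closed set with empty interior and $\nu(W)\ge 1-\sum_n\nu(U_n)>0$.

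Then $\partial W=W$ has positive measure and $W$ is not proper. The array $x_W$ is not a regular Toeplitz array either. If $W$ is generic, then $\tau(G)\cap W=\tau(G)\cap\partial W=\emptyset$, so $x_W\equiv 0$. If $W$ is not generic, no $g$ with $\tau(g)\in W$ lies in any $\Per(x_W,\Gamma_n)$, since that would force an open cylinder into $W$. Either way your first step is unavailable, because the isometry of Theorem~\ref{thm: properties Phi}(2) and the identification $\mathrm{ac}=\mathrm{Dim}_{\mathrm{Box}}(H,d_W)$ both need $W\in\mathcal{W}_{\mathrm{reg}}(H)$.

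Two smaller problems remain as well. Your per-level ``Sidon'' bound ignores translates of level-$n$ holes landing in holes of other levels, so $d_W(0,h)\ge\frac12\sum_{n\ge m}\nu(U_n)$ does not follow directly. And irredundancy of $W$ is never checked.

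\textbf{What is missing is a nested structure.} All deeper holes must be confined to an ``undecided'' region whose measure tends to $0$, and everything outside it must be decided to lie in $\mathrm{int}(W)$ or in $H\setminus W$. This is exactly what the paper does. It first produces a chain with $[G:\Gamma_n]=sp^n$, using uniform subgroups of the pro-$q$ completion. It then works in the carry-over coordinates $\prod_n(D_n\cap\Gamma_{n-1})$ and splits each digit set into $A_n$ (into $W$), $B_n=\{1_G\}$ (out of $W$) and $C_n$ (undecided). This gives $W=\ol{\bigcup_n X_n}$ with $\partial W=\bigcap_n Z_n$ and $\nu(Z_n)\to0$. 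Note that $W$ is then also of your form $H\setminus\bigcup_n Y_n$, but with the holes $Y_n\subseteq Z_{n-1}$ nested.

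In that setting $\nu(Z_{m-1})$ plays the role of your $r_m$. The lower bound does not come from any Sidon property: the carry-over lemma shows that an element first nontrivial at level $n$ carries $Y_n$ (or a level-$(n+1)$ piece of it) into $\mathrm{int}(W)$. Together with the upper bound this gives Lipschitz equivalence of $d_W$ with an explicit ultrametric $d_t$ whose box dimension is computable, and irredundancy comes for free.

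Finally, in this scheme the undecided measure shrinks by a factor between $p/(p-2)$ and $p$ per level. A fixed chain therefore only reaches $a\le\log p/(\log p-\log(p-2))$, and the full range $[1,\infty)$ comes from passing to $(\Gamma_{Kn})_n$, which gives an isomorphic odometer. Your assumption that $r_m$ can simply be tuned to $[G:\Gamma_m]^{-1/a}$ hides this constraint.
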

We will see in the proof in Section~\ref{sec: krieger/ac} that one also has some degree of freedom for choosing the maximal equicontinuous factor of $(\ol{O_G(x)},G)$.
It should be mentioned that in \cite[Example 3.16, 3.17]{KasjanKeller2025Besicovitch}, using techniques for $\mathcal{B}$-free subshifts, the authors prove the same conclusion as in Theorem~\ref{thm: realize ac Toeplitz intro} but only for $G=\Z$ (however allowing also $a = \infty$). \\
We are furthermore able to prove an upper bound on amorphic complexity, which is a direct generalization of \cite[Thm.\ 1.6]{FuGrJa2016Amorphic}.
\begin{thm}\label{thm: estimate ac Toeplitz intro}
    Let $G$ be a countably infinite, residually finite amenable group, let $\mathbf{\Gamma}=(\Gamma_n)_{n\in\N}$ be a decreasing sequence of finite index normal subgroups of $G$ and let $x\in \{0,1\}^G$ be a regular Toeplitz array which is relative to $\mathbf{\Gamma}$.
    Then, one has
    \[ \ol{\mathrm{ac}}(\ol{O_G(x)},G) \leq \limsup_{n\to\infty} \frac{\log [G:\Gamma_{n+1}]}{-\log(1-D(x,\Gamma_n))},\]
    where $\ol{\mathrm{ac}}(\ol{O_G(x)},G)$ denotes the upper amorphic complexity of the subshift $(\ol{O_G(x)},G)$ and $D(x,\Gamma_n)$ denotes the asymptotic density of $\Per(x,\Gamma_n)$ in $G$.
\end{thm}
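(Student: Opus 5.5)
We bound the upper amorphic complexity by counting separated sets with respect to the Besicovitch pseudometric, following \cite[Thm.\ 1.6]{FuGrJa2016Amorphic}. Recall that $\ol{\mathrm{ac}}(\ol{O_G(x)},G)=\lim_{\delta\to 0}\limsup_{\nu\to 0}\frac{\log \mathrm{Sep}(\delta,\nu)}{-\log \nu}$, where $\mathrm{Sep}(\delta,\nu)$ is the maximal cardinality of a $(\delta,\nu)$-separated subset of $\ol{O_G(x)}$. Here two points $y,z$ are $(\delta,\nu)$-separated if the upper density, along the F\o lner sequence, of $\{g: d(gy,gz)\ge\delta\}$ is at least $\nu$. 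For a shift, $d(gy,gz)\ge\delta$ means that $y$ and $z$ differ somewhere in a finite window $Kg$ with $K=K_\delta$. Hence separation is controlled by the density of the set where $y$ and $z$ differ, up to a factor $|K_\delta|$: if $y,z$ differ on a set of upper density $\eta$, then the separation density is at most $|K_\delta|\,\eta$.

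\emph{Step 1 (structure of the orbit closure).} For regular Toeplitz $x$ relative to $\mathbf\Gamma$, each $y\in\ol{O_G(x)}$ has a $\Gamma_n$-skeleton. This is the restriction of $y$ to a translate of $\Per(x,\Gamma_n)$, which is $\Gamma_n$-periodic and agrees with a translate of $x$ there. There are at most $[G:\Gamma_n]$ possible skeletons, one per coset of $\Gamma_n$. Two points with the same $\Gamma_n$-skeleton can differ only on the complement of that translated periodic set, which has density $1-D(x,\Gamma_n)$. Densities here exist uniformly, since these sets are unions of cosets of a finite-index subgroup, so upper Besicovitch densities along any F\o lner sequence equal these values.

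\emph{Step 2 (counting).} Fix $\delta$ and $\nu$ small, and choose $n$ with
\[ |K_\delta|\,(1-D(x,\Gamma_{n+1}))<\nu\le |K_\delta|\,(1-D(x,\Gamma_n)). \]
This is possible by regularity, since $1-D(x,\Gamma_n)\to0$, and the densities are monotone because $\Gamma_n$ is decreasing. Points sharing a $\Gamma_{n+1}$-skeleton are not $(\delta,\nu)$-separated, so any separated set has at most one point per skeleton. Thus
\[ \mathrm{Sep}(\delta,\nu)\le [G:\Gamma_{n+1}]. \]
Since $\nu\le|K_\delta|(1-D(x,\Gamma_n))$, we get $-\log\nu\ge -\log(1-D(x,\Gamma_n))-\log|K_\delta|$. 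Therefore
\[ \frac{\log\mathrm{Sep}(\delta,\nu)}{-\log\nu}\le\frac{\log[G:\Gamma_{n+1}]}{-\log(1-D(x,\Gamma_n))-\log|K_\delta|}. \]
As $\nu\to0$ we have $n\to\infty$, and the additive constant $\log|K_\delta|$ becomes negligible because $-\log(1-D(x,\Gamma_n))\to\infty$. The limsup is therefore bounded by the right-hand side of the theorem for every $\delta$, and letting $\delta\to0$ gives the claim.

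The main obstacle is Step 1 for general $y$ in the orbit closure, not just translates of $x$. One has to show that $\Per(y,\Gamma_n)$ contains a translate of $\Per(x,\Gamma_n)$ on which $y$ agrees with the corresponding translate of $x$. I would prove this by approximating $y$ by $g_kx$: on finite windows the $\Gamma_n$-periodic pattern of $g_kx$ depends only on the coset $g_k\Gamma_n$, so passing to a subsequence with constant coset and taking the limit transfers the skeleton to $y$. A second point needs care. The upper density of the separation set must be controlled along the given F\o lner sequence. For this I would note that the set where $y,z$ differ is contained in a union of cosets of $\Gamma_{n+1}$, whose density exists uniformly for every F\o lner sequence. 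Its $K_\delta$-thickening then has upper density at most $|K_\delta|(1-D(x,\Gamma_{n+1}))$.
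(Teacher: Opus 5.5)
Your proof is correct, but it takes a different route from the paper. It is essentially the original skeleton-counting argument of \cite[Thm.\ 1.6]{FuGrJa2016Amorphic} carried over to $G$, which the paper deliberately replaces by a model-set argument.

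In the paper, one writes $x=x_W$ with $W\subseteq\overleftarrow{G}$ proper, generic and regular (Proposition~\ref{prop: relative Toeplitz generic windows}). Corollary~\ref{cor: ac is box dim of d_W} then expresses $\ol{\mathrm{ac}}$ as $\limsup_{\varepsilon\to0}\log\nu(B^{d_W}_\varepsilon(1_{\overleftarrow{G}}))/\log\varepsilon$. The only dynamical input is that right multiplication by an element of the level-$n$ cylinder of $1_{\overleftarrow{G}}$ preserves the unions of level-$n$ cylinders $U_n,V_n$ built from $\Per(x,\Gamma_n,1)$ and $\Per(x,\Gamma_n,0)$. Hence that whole cylinder lies in the $d_W$-ball of radius $1-D(x,\Gamma_n)$, which gives $\nu(B^{d_W}_\varepsilon(1_{\overleftarrow{G}}))\ge 1/[G:\Gamma_n]$ for $\varepsilon>1-D(x,\Gamma_n)$. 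Your $\Gamma_{n+1}$-skeleton classes in the orbit closure play the role of these cylinders in the odometer. The bookkeeping over $\varepsilon\in(\varepsilon_{n+1},\varepsilon_n]$, with $\varepsilon_n=1-D(x,\Gamma_n)$, is the same as yours over the frequency parameter.

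The paper's route is shorter once the torus parametrisation and the isometry $\psi_W$ are available, and it sets up the $d_W$-geometry reused in Section~\ref{sec: krieger/ac}. Yours is self-contained and holds for every F\o lner sequence without further comment. It uses only that $\Per(x,\Gamma_n)$ is a union of cosets of the normal subgroup $\Gamma_n$ whose density tends to $1$. In particular it never needs irredundancy of $W$. This matters because Corollary~\ref{cor: ac is box dim of d_W} is stated for $W\in\mathcal{W}_{\mathrm{reg}}(H)$, while the window of Proposition~\ref{prop: relative Toeplitz generic windows} can fail to be irredundant when $x$ is merely relative to $\mathbf{\Gamma}$. So the model-set proof tacitly needs an extra reduction there, which your approach avoids.

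One simplification: work directly with the paper's definition $\ol{\mathrm{ac}}_{\mathcal{F}}=\ol{\mathrm{Dim}}_{\mathrm{Box}}([X]_{\mathcal{F}},D_{\mathcal{F}})$. Two points with a common $\Gamma_{n+1}$-skeleton are at $D_{\mathcal{F}}$-distance at most $1-D(x,\Gamma_{n+1})$. This eliminates $K_\delta$, the $\log|K_\delta|$ correction, and the appeal to the equivalence of the two definitions of amorphic complexity.
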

The second question as it is formulated above, can already be answered using Krieger's Theorem \cite[Thm.\ 1.1]{Krieger2007Toeplitz}, which shows that any possible entropy can be attained by a Toeplitz subshift -- and those are always almost automorphic.
However, Krieger's result lacks specification on the maximal equicontinuous factor of the subshift, apart from the fact that it is a $G$-odometer.
We are able to strengthen this by showing that not only any $G$-odometer, but in fact any metric compactification of $G$ can be realized as the maximal equicontinuous factor.
\begin{thm}\label{thm: realize entropy intro}
    Let $G$ be a countably infinite, residually finite amenable group, let $(H,\tau)$ be a metric compactification of $G$ and let $h\in [0,\log 2)$.
    Then, there exists an array $x\in \{0,1\}^G$ which is almost automorphic over $H$ such that $\htop(\ol{O_G(x)},G) = h$.
\end{thm}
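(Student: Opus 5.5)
We construct a window $W\tm H$ whose symbolic model set $x_W=\1_W\circ\tau$ is almost automorphic over $H$, and then tune the entropy of $\ol{O_G(x_W)}$ to equal $h$. Write $d=\htop$ for brevity. The natural source of entropy is the boundary $\partial W$: points $\tau(g)$ that land far from $\partial W$ have a determined symbol, while near $\partial W$ the orbit closure may see either symbol. The standard heuristic is that $\htop(\ol{O_G(x_W)},G)\le \nu(\partial W)\log 2$. Conversely, if $\partial W$ is a "fat Cantor–like" set on which both symbols occur freely, the entropy is $\nu(\partial W)\log 2$. So the plan is to pick a closed nowhere dense set $C\tm H$ with $\nu(C)=h/\log 2\in[0,1)$ and build $W$ with $\partial W=C$ in a way that realizes all local patterns on $C$.

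\textbf{Step 1: almost automorphy.} Take $W$ with $\ol{\mathrm{int} W}=\ol W$, so $W$ is proper, and with $\partial W$ nowhere dense. We also need the window to be irredundant, i.e.\ $\{h: h+W=W\}$ trivial; we enforce this by a small modification of $W$ in an open set. Then the standard cut and project theory gives the following: the torus parametrization $\pi:\ol{O_G(x_W)}\to H$ exists, and it is injective at points $h$ with $h\tau(G)\cap\partial W=\emptyset$. This set is a dense $G_\delta$ because $\partial W$ is nowhere dense, $\tau(G)$ is countable, and $H$ is Baire. So $x_W$ (up to choosing $\tau(1)$ off the bad set, e.g.\ replacing $W$ by a translate) is almost automorphic over $H$, and $H$ is the maximal equicontinuous factor.

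\textbf{Step 2: upper bound.} Let $F_n$ be a Følner sequence. For a fixed open neighbourhood $U_\varepsilon\supseteq\partial W$ with $\nu(U_\varepsilon)<\nu(\partial W)+\varepsilon$, choose a finite partition of $H$ into pieces of small diameter and regular boundary. A pattern of any $y\in\ol{O_G(x_W)}$ on $F_n$ is then determined by the piece containing $\pi(y)$ together with the free choices at the $g\in F_n$ where $\pi(y)\tau(g)\in U_\varepsilon$. By uniform distribution of $\tau(G)$ in $H$ (Weyl-type equidistribution for regular sets along Følner sequences, as used in Theorem~\ref{thm: properties Phi intro}), this number is at most $(\nu(U_\varepsilon)+\varepsilon)|F_n|$ uniformly. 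This gives $d\le \nu(\partial W)\log2$.

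\textbf{Step 3: lower bound, the main obstacle.} We must ensure that the orbit closure really contains all $2^{\nu(C)|F_n|}$ patterns roughly. Here I would use residual finiteness. Choose a decreasing sequence of finite index normal subgroups, or pull back finite quotients of $H$ when $H$ has them. Since $H$ is only a metric compactification, we instead build $W$ inductively: $W=\bigcup_k W_k$, where $W_k$ is a finite union of small open balls placed inside $C$'s complement pieces. The balls are chosen so that, for some finite set $E_k\subseteq G$ with $\tau(E_k)$ $\delta_k$-dense near $C$, every $0/1$ assignment on a prescribed positive proportion of points near $C$ is realized at some translate. This is a Cantor-type construction à la Krieger/Williams: at stage $k$ we partition a neighbourhood of $C$ into small cells and within each cell plant all configurations. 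Controlling that the limit boundary has measure exactly $h/\log2$, while keeping the free patterns of density $\nu(C)$, is the delicate part. An alternative I would try first is to take $C$ a fat Cantor set and $W=\bigcup$ of the open gaps of alternate "levels", so that $\partial W=C$. I would then directly show that for $y$ with $\pi(y)\in$ a set of full measure the symbols on $\tau^{-1}(C-\pi(y))$ can be chosen independently, using that $C$ is perfect and that limits of translates of $x_W$ from both sides realize arbitrary choices. Finally, the exact value follows from Steps 2 and 3, and $h=0$ is handled by $\nu(C)=0$.
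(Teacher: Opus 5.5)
\textbf{There is a genuine gap in Step~3.} Your Steps~1 and~2 are essentially sound. The upper bound $\htop(\ol{O_G(x_W)},G)\le\nu(\partial W)\log 2$ does hold here, provided you read the free positions off finitely many fixed reference points $\zeta_i$ rather than off $\pi(y)$. Take an open $U\supseteq\partial W$ with $\nu(\ol U)$ close to $\nu(\partial W)$. Then $\ol W\setminus U$ and $H\setminus(\mathrm{int}(W)\cup U)$ are compact subsets of $\mathrm{int}(W)$ and of $H\setminus\ol W$, and the uniform ergodic theorem does the counting.

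Step~3 carries the entire content of the theorem. To obtain the exact value $h$, your route needs the matching lower bound $\htop(\ol{O_G(x_W)},G)\ge\nu(\partial W)\log 2$. You do not prove it; you call it the delicate part and leave it open. It is also not a general fact. For $H$ an odometer, Proposition~\ref{prop: relative Toeplitz generic windows} identifies proper generic windows with $\nu(\partial W)>0$ with irregular Toeplitz arrays, and these can have zero entropy. For example, for $G=\Z$, build each block from rapidly growing numbers of copies of the previous one, and fill all holes of a single copy with one constant symbol, alternating the symbol between stages. Only two aligned blocks occur per level, yet the hole density stays positive.

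Your fallback does not repair this. ``Fat Cantor set'', ``gaps of alternate levels'' and ``from both sides'' only make sense for one-dimensional $H$. Here $H$ is an arbitrary compact metrizable group, possibly totally disconnected or non-abelian; your notation $h+W$, $C-\pi(y)$ already presupposes commutativity. Even on the circle the claimed independence is unjustified. A point in the fibre over $\xi$ is a limit of $g_kx_W$, whose symbol at $g$ is $\lim_k\1_W(\tau(g)\xi\,\eta_k)$ with the \emph{same} perturbation $\eta_k=\xi^{-1}\tau(g_k)$ for every $g$. So symbols at different boundary positions are strongly coupled, exactly as in the zero-entropy example above. Similarly, the ``small modification'' enforcing irredundancy in Step~1 is unspecified. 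You would have to show it neither changes $\nu(\partial W)$ nor destroys the richness of patterns.

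\textbf{What the paper does instead.} The missing idea is that one never needs an identity of the form $\htop=\nu(\partial W)\log 2$. The paper builds a proper generic window $W_\gamma$ by an explicit induction, using the F\o lner fundamental domains of Lemma~\ref{lem: good fundamental domains}. In $W_\gamma$, \emph{every} $0$-$1$ word on a set $S_n\tm F_{k_n}$ with $\#S_n\ge\gamma\,\#F_{k_n}$ is planted on pairwise disjoint translates $S_nt_{n,j}$, $t_{n,j}\in\widehat T_n$, by means of disjoint small $\varepsilon_{n+1}$-balls. This yields only the inequality $\htop(\ol{O_G(x_{W_\gamma})},G)\ge\gamma\log 2$.

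The paper then takes the zero-entropy regular window $W_0=\ol{\bigcup_n B_{\varepsilon_n}(g_{n-1}^*)}\tm W_\gamma$. It joins $W_0$ to $W_\gamma$ by the $\ol D$-continuous path $W(t)=(W_\gamma\cap A(t))\cup(W_0\setminus\mathrm{int}(A(t)))$ from Theorem~\ref{thm: generic windows path connected}. Entropy is $\ol D$-continuous (Corollary~\ref{cor: entropy continuous}, i.e.\ Theorem~\ref{thm: properties Phi} combined with Theorem~\ref{thm: entropy cts wrt Weyl}), so the intermediate value theorem hits $h$ exactly. Properness is restored by passing to $W^*(t)=\ol{\mathrm{int}(W(t))}$. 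Irredundancy comes for free from the sandwich $W_0\tm W^*(t)\tm W_\gamma$ (Lemma~\ref{lem: window on path irredundant}), which uses the nested balls $B_{\varepsilon_n}(g_n^*)$ and the zeros planted at the points $g_{n,i}$.

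To rescue your approach you would need such an explicit planting construction in Step~3. You would then still need a device like this continuity argument to turn an inequality into the exact value $h$.
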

\textit{Structure of the article.}
Several basic notions, such as amorphic complexity, are introduced in Section~\ref{sec: prelims}.
In Section~\ref{sec: odometers} we discuss $G$-odometers for countably infinite residually finite groups and outline an alternative representation of $G$-odometers.
This will be of importance for later constructions.
Section~\ref{sec: Toeplitz} introduces the notion of a Toeplitz array and necessary background.
The definition of a cut and project scheme as well as of a model set is given in Section~\ref{sec: model}, followed by the proof of Theorem~\ref{thm: properties Phi intro}.
Section \ref{sec: ac} is dedicated to proving Theorem~\ref{thm: realize ac Toeplitz intro} and Theorem~\ref{thm: estimate ac Toeplitz intro}.
Moreover, it turns out that with similar techniques we can construct a regular window whose boundary is a singleton but whose associated subshift admits infinite amorphic complexity (see Proposition~\ref{prop: counterexample}).
This is a counterexample to \cite[Thm.\ 1.3]{FuGrJaKw2023Amorphic} in the case of an odometer as the internal group.
Lastly, in Section \ref{sec: krieger} we begin by constructing paths which are continuous w.r.t.\ the pseudometric $\ol{D}$.
This is then used to show Theorem~\ref{thm: realize entropy intro}.

\section{Preliminaries}\label{sec: prelims}
\subsection{Basic definitions and background}\label{sec: definitions and background}
Let $G$ be a countably infinite, amenable group (see \cite{Pier1984Amenable} for further reading on amenable groups).
A \textbf{dynamical system} $(X,\alpha,G)$ consists of a compact metric space $(X,d)$ and an action $\alpha: G\times X\to X,\: (g,x)\mapsto \alpha_g(x)$ such that for each $g\in G$ the mapping $x\mapsto \alpha_g(x)$ is continuous.
We often suppress the action $\alpha$ and write $gx$ instead of $\alpha_g(x)$ and $(X,G)$ instead of $(X,\alpha,G)$ for the dynamical system.
We denote the \textbf{$G$-orbit} of $x\in G$ by $O_G(x) = \{gx: g\in G\}$.
One calls $(X,G)$ \textbf{minimal} if every $G$-orbit is dense in $X$, i.e.\ $\ol{O_G(x)} = X$ for all $x\in X$.
If $(Y,G)$ is another dynamical system, we call a map $\varphi: X\to Y$ a \textbf{factor map} if $\varphi$ is surjective, continuous and $\varphi(gx) = g\varphi(x)$ holds for all $g\in G, x\in X$.
In this case, $(Y,G)$ is called a \textbf{factor} of $(X,G)$.
If $\varphi$ is additionally bijective, one calls $\varphi$ a \textbf{conjugacy}.\\
An \textbf{invariant measure} of $(X,G)$ is a Borel probability measure $\mu$ on $X$ which satisfies $\mu(gA)= \mu(A)$ for all $g\in G$ and measurable sets $A$.
If $(X,G)$ has exactly one invariant measure, we call the system \textbf{uniquely ergodic}.
The following is well-known.
\begin{thm}[Uniform Ergodic Theorem]\label{eq: uniform ergodic theorem}
    Let $\mathcal{F}=(F_n)_{n\in\N}$ be a F\o lner sequence in $G$.
    Let $(X,G)$ be a uniquely ergodic dynamical system with unique invariant measure $\mu$.
    Then, for any $\varphi\in C(X)=\{\psi:X\to \C: \psi\text{ is continuous}\}$, the sequence $\frac{1}{\# F_n} \sum_{g\in F_n} \varphi(gx)$ converges uniformly in $x$ to $\int_X \varphi\: d\mu$ as $n\to \infty$.
\end{thm}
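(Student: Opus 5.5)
The plan is the classical argument: weak-* compactness of the space of probability measures, combined with uniqueness of the invariant measure and a contradiction argument to upgrade pointwise convergence to uniform convergence. Fix $\varphi\in C(X)$ and write $A_n\varphi(x) = \frac{1}{\# F_n}\sum_{g\in F_n}\varphi(gx)$.

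Suppose the conclusion fails. Then there exist $\varepsilon>0$, a subsequence $(n_k)_{k\in\N}$ and points $x_k\in X$ with
\[
\abs{A_{n_k}\varphi(x_k) - \int_X\varphi\,d\mu}\geq \varepsilon \quad\text{for all } k.
\]
Consider the empirical measures
\[
\mu_k = \frac{1}{\# F_{n_k}}\sum_{g\in F_{n_k}}\delta_{gx_k},
\]
so that $A_{n_k}\varphi(x_k)=\int\varphi\,d\mu_k$. Since $X$ is compact metric, the Banach--Alaoglu and Riesz representation theorems make the space of Borel probability measures on $X$ weak-* sequentially compact. Passing to a further subsequence, we may assume $\mu_k\to\mu'$ weak-* for some Borel probability measure $\mu'$.

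The main step is to show that $\mu'$ is $G$-invariant. Fix $h\in G$ and $\psi\in C(X)$. Then
\[
\int\psi\circ h\,d\mu_k-\int\psi\,d\mu_k = \frac{1}{\# F_{n_k}}\Big(\sum_{g\in hF_{n_k}}\psi(gx_k)-\sum_{g\in F_{n_k}}\psi(gx_k)\Big),
\]
which is bounded in absolute value by
\[
\|\psi\|_\infty\,\frac{\#(hF_{n_k}\symdiff F_{n_k})}{\# F_{n_k}}.
\]
This tends to $0$ by the Følner property; I will use the convention matching the action, i.e. left Følner sets, and if the paper uses right Følner sets the same computation works with the appropriate side. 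Taking limits gives $\int\psi\circ h\,d\mu'=\int\psi\,d\mu'$ for all $\psi\in C(X)$. By the Riesz uniqueness of the representing measure, $h_*\mu'=\mu'$, so $\mu'$ is invariant.

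By unique ergodicity, $\mu'=\mu$. Hence
\[
A_{n_k}\varphi(x_k)=\int\varphi\,d\mu_k\longrightarrow\int\varphi\,d\mu,
\]
which contradicts the choice of $\varepsilon$. For complex-valued $\varphi$, we either apply the argument to the real and imaginary parts or run it directly, since weak-* convergence is tested against complex continuous functions as well.

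The only delicate point is the invariance step. One has to check that the side of the Følner condition is compatible with how $gx$ is composed in the action, namely that $(hg)x = h(gx)$ is a left action. Reindexing the sum by $g\mapsto hg$ then produces exactly the symmetric difference $hF_n\symdiff F_n$.
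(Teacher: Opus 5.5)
Your proof is correct. It is the standard argument: weak-$*$ limit points of the empirical measures $\frac{1}{\# F_{n_k}}\sum_{g\in F_{n_k}}\delta_{gx_k}$ are $G$-invariant by the F\o lner property, so unique ergodicity forces them to equal $\mu$. The paper gives no proof of this statement; it only cites it as well-known, so there is no other route to compare yours with.

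Your hedge about sides needs one correction. Both actions the paper uses, $(gx)(h)=x(hg)$ and $\rho_g(\xi)=\tau(g)\xi$, are left actions. Your computation therefore genuinely needs $\#(hF_n\symdiff F_n)/\#F_n\to 0$. A right condition $\#(F_nh\symdiff F_n)/\#F_n\to 0$ would not make $\mu'$ invariant by this argument. So the theorem should be read with the F\o lner side matching the action, which is what you actually assume. It is not true that the same computation works for the other side.
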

A dynamical system $(Y,G)$ is called \textbf{equicontinuous} if for all $\varepsilon>0$ there exists $\delta>0$ such that for all $x,y\in Y$ one has $d(x,y)<\delta \implies \forall g\in G: d(gx,gy)<\varepsilon$.
It is well-known that minimal equicontinuous systems are always uniquely ergodic (even for non-amenable $G$).
Furthermore, every dynamical system $(X,G)$ has a so-called \textbf{maximal equicontinuous factor (MEF)} $(X_{\mathrm{eq}},G)$ which is unique up to conjugacy \cite{ElGo1960Homomorphisms}.
Let $\pi_{\mathrm{eq}}:X\to X_{\mathrm{eq}}$ denote the corresponding factor map.
If $\pi_{\mathrm{eq}}^{-1}\{\pi_{\mathrm{eq}}(x)\} = \{x\}$, we call the point $x \in X$ \textbf{almost automorphic}.
Let $Y$ denote the MEF of $\ol{O_G(x)}$ with factor map $\pi$.
It is easy to see that $x$ is almost automorphic if and only if $\pi^{-1}\{\pi(x)\} = \{x\}$.
Thus, if we want to specify the MEF of $\ol{O_G(x)}$, we call $x$ \textbf{almost automorphic over $(Y,G)$} (or just over $Y$).\\
We are predominantly interested in subsystems of the full 0-1 shift.
It is defined as the space $\{0,1\}^G$ equipped with the (metrizable) product topology (where $\{0,1\}$ is endowed with the discrete topology).
We refer to the elements of $\{0,1\}^G$ as ($G$-)\textbf{arrays}.
The natural $G$-action on $\{0,1\}^G$ is the \textbf{(left) shift} which is defined via
\[ (gx)(h) = x(hg) \text{ for } g,h,\in G \text{ and } x\in \{0,1\}^G.\]
A compact and shift-invariant subset $X\tm \{0,1\}^G$ is called a \textbf{subshift}.
Restricting the $G$-action to $X$ yields a dynamical system, which one also calls a subshift.\\
Another class of dynamical systems that will play a role later originates from so-called \textbf{metric group compactifications} $(H,\tau)$ of $G$ which consist of a compact, metrizable group $H$ and an injective group homomorphism $\tau:G\to H$ with dense range.
In this case, $H$ admits a bi-invariant metric $d_H$ (see \cite[Prop.\ 8.43]{HoMo2020CompactGroups}).
Clearly, the dynamical system $(H,\rho,G)$ defined by $\rho: G\times H\to H,\: (g,\xi)\mapsto \tau(g)\cdot \xi$ is minimal and equicontinuous.
The unique invariant measure is given by the Haar measure on $H$.

\subsection{Topological entropy and amorphic complexity}\label{sec: entropy and ac}
Topological entropy and amorphic complexity are both ways to measure the complexity or chaoticity of a dynamical system.
Even though one can define these quantities for arbitrary dynamical systems, we decide -- for the sake of simplicity -- to restrict to subshifts here.
For further reading see \cite{Do2011Entropy} for entropy, and \cite{FuGrJa2016Amorphic, FuGr2020Constant, FuGrJaKw2023Amorphic} for amorphic complexity.\\
We shall first introduce the concept of \textbf{box dimension}.
Let $(Z,d_Z)$ be a metric space and let $\varepsilon>0$.
We say that a subset $A\tm Z$ is \textbf{$\varepsilon$-separated} if $d_Z(z,z^\prime)\geq \varepsilon$ holds for all $z,z^\prime\in A$.
Let $\mathrm{Sep}(Z,d_Z,\varepsilon)$ denote the (possibly infinite) maximal cardinality of an $\varepsilon$-separated subset of $Z$.
We can now define the \textbf{upper box dimension} of $(Z,d_Z)$ as
\[ \ol{\mathrm{Dim}}_{\mathrm{Box}}(Z,d_Z) = \limsup_{\varepsilon\to 0} \frac{\log \mathrm{Sep}(Z,d_Z,\varepsilon)}{-\log \varepsilon}.\]
Analogously we can define $\underline{\mathrm{Dim}}_{\mathrm{Box}}(Z,d_Z)$, and whenever both quantities coincide we denote their common value by $\mathrm{Dim}_{\mathrm{Box}}(Z,d_Z)$.\\
Now fix a F\o lner sequence $\mathcal{F}=(F_n)_{n\in\N}$ in $G$.
We define the \textbf{Besicovitch pseudometric} $D_{\mathcal{F}}$ w.r.t.\ the F\o lner sequence $\mathcal{F}$ via
\[ D_{\mathcal{F}}(x,y) =\limsup_{n\to\infty} \frac{1}{\# F_n} \cdot \# \{g\in F_n: x(g)\neq y(g)\} \quad (x,y \in \{0,1\}^G).\]
For $x\in \{0,1\}^G$, we let $[x]_{\mathcal{F}}$ denote the equivalence class of $x$ w.r.t.\ the equivalence relation
\[y\sim z :\iff D_{\mathcal{F}}(y,z)=0.\]
We denote the quotient space by $[\{0,1\}^G]_{\mathcal{F}}$ and the canonical projection by $\pi_{\mathcal{F}}: \{0,1\}^G \to [\{0,1\}^G]_{\mathcal{F}}$.
Note that $D_{\mathcal{F}}$ is a (well-defined) metric on $[\{0,1\}^G]_{\mathcal{F}}$.
We define the \textbf{upper amorphic complexity} of a subshift $(X,G)$ w.r.t.\ $\mathcal{F}$ as
\[ \ol{\mathrm{ac}}_{\mathcal{F}}(X,G) = \ol{\mathrm{Dim}}_{\mathrm{Box}}([X]_{\mathcal{F}}, D_{\mathcal{F}}).\]
Analogously, one can define $\underline{\mathrm{ac}}_{\mathcal{F}}(X,G)$ and $\mathrm{ac}_{\mathcal{F}}(X,G)$.
\begin{rem}
    It should be pointed out that the approach above is not the usual definition of amorphic complexity as given in \cite{FuGrJa2016Amorphic, FuGrJaKw2023Amorphic}.
    However, it is shown in \cite[Prop.\ 3.19]{FuGrJa2016Amorphic} that for symbolic systems both approaches are equivalent.
\end{rem}
Now we can turn to the definition of topological entropy.
Let $A\tm G$.
An element $\mathfrak{W}$ of the set $\{0,1\}^A$ is called an \textbf{$A$-word}.
We say $\mathfrak{W}$ \textbf{occurs} in the array $x\in \{0,1\}^G$, if there exists $h\in G$ such that for all $g\in A$ one has $x(gh) = \mathfrak{W}(g)$.
Similarly, we say that $\mathfrak{W}$ occurs in a subshift $X$ if $\mathfrak{W}$ occurs in some $x\in X$.
Let $\mathcal{B}_A(X)\tm \{0,1\}^A$ denote the set of all $A$-words which occur in $X$.
Then, the \textbf{topological entropy} of the subshift $(X,G)$ is defined as
\[ \htop(X,G) = \lim_{n\to\infty} \frac{\log \#\mathcal{B}_{F_n}(X)}{\# F_n}.\]
It follows from the Ornstein-Weiss Lemma (c.f.\ \cite[Thm.\ 5]{LackaStraszak2018QuasiUniform}) that above limit indeed always exists and is independent of the F\o lner sequence $\mathcal{F}$, justifying the notation.
This is in contrast to amorphic complexity, whose value in general depends on the choice of $\mathcal{F}$ (see \cite{FuGrJaKw2023Amorphic}).
It should be mentioned that it is also well-known that there is a metric $d$ on $\{0,1\}^G$ which generates the product topology and which satisfies $\htop(X,G) = \mathrm{Dim}_{\mathrm{Box}}(X,d)$.
An important property of entropy for symbolic systems, is the fact that it is continuous w.r.t.\ the \textbf{Weyl pseudometric} $D_{\mathrm{Weyl}}$ which is defined for $x,y\in \{0,1\}^G$ via $D_{\mathrm{Weyl}}(x,y) = \sup_{\mathcal{F}} D_{\mathcal{F}}(x,y)$, where the supremum is taken over all F\o lner sequences in $G$.
\begin{thm}[{\cite[Thm.\ 32]{LackaStraszak2018QuasiUniform}}]\label{thm: entropy cts wrt Weyl}
    The map $\{0,1\}^G \to [0,\infty), \: x \mapsto \htop(\ol{O_G(x)},G)$ is continuous w.r.t.\ $D_{\mathrm{Weyl}}$.
\end{thm}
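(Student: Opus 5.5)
The plan is to show that if $D_{\mathrm{Weyl}}(x,y)<\delta$ for small $\delta$, then every $F_n$-word of $\ol{O_G(y)}$ lies within normalized Hamming distance $\delta$ of some $F_n$-word of $\ol{O_G(x)}$, uniformly over positions. A counting argument then gives
\[ \htop(\ol{O_G(y)},G)\le \htop(\ol{O_G(x)},G)+H(\delta), \]
where $H(\delta)=-\delta\log\delta-(1-\delta)\log(1-\delta)$ is the binary entropy function. Since the situation is symmetric in $x$ and $y$ and $H(\delta)\to 0$ as $\delta\to 0$, continuity follows.

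\emph{Step 1: uniformity in position.} Fix a F\o lner sequence $\mathcal{F}=(F_n)$ and $\delta'>D_{\mathrm{Weyl}}(x,y)$. I claim there is $N$ such that for all $n\ge N$ and all $h\in G$,
\[ \#\{g\in F_n: x(gh)\neq y(gh)\}<\delta'\,\# F_n. \]
Suppose not. Then there are $n_k\to\infty$ and $h_k\in G$ for which the inequality fails. The sets $F_{n_k}h_k$ again form a F\o lner sequence for the invariance side relevant here, since the shift is $(gx)(h)=x(hg)$ and words occur at translates $Ah$. Along this sequence $D_{\mathcal{F}'}(x,y)\ge\delta'$, which contradicts the definition of $D_{\mathrm{Weyl}}$ as a supremum over all F\o lner sequences. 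The main thing to check is the left/right convention, namely that $(F_{n_k}h_k)_k$ is F\o lner in the sense used to define $D_{\mathrm{Weyl}}$. If it fails on the nose, I would pass to the equivalent formulation of the Weyl pseudometric via $\lim_n\sup_h$ over translates. I expect this step to be the main obstacle.

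\emph{Step 2: words of the orbit closure.} Every word in $\mathcal{B}_{F_n}(\ol{O_G(y)})$ already occurs in $y$, because a finite pattern of a limit of shifts of $y$ appears in some shift of $y$. So such a word is $g\mapsto y(gh)$ on $F_n$ for some $h\in G$. By Step 1, for $n\ge N$ it differs from the word $g\mapsto x(gh)$, which lies in $\mathcal{B}_{F_n}(\ol{O_G(x)})$, in fewer than $\delta'\#F_n$ places.

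\emph{Step 3: counting.} Take $\delta'<1/2$. Each word of $x$ has at most
\[ \sum_{j\le \delta'\#F_n}\binom{\#F_n}{j}\le e^{H(\delta')\#F_n} \]
words within that Hamming distance. Hence
\[ \#\mathcal{B}_{F_n}(\ol{O_G(y)})\le \#\mathcal{B}_{F_n}(\ol{O_G(x)})\cdot e^{H(\delta')\#F_n}. \]
Taking $\log$, dividing by $\#F_n$ and letting $n\to\infty$ gives $\htop(\ol{O_G(y)})\le\htop(\ol{O_G(x)})+H(\delta')$. Exchanging the roles of $x$ and $y$ gives $\abs{\htop(\ol{O_G(x)})-\htop(\ol{O_G(y)})}\le H(\delta')$, which tends to $0$ as $\delta'\to 0$. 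This proves continuity with respect to $D_{\mathrm{Weyl}}$.
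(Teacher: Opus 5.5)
Your proof is correct. The paper gives no proof of this statement; it quotes it as a black box from \cite{LackaStraszak2018QuasiUniform}, so there is no in-paper argument to compare with. What you give is a correct, self-contained block-counting proof. It actually yields uniform continuity with an explicit modulus: $\abs{\htop(\ol{O_G(x)},G)-\htop(\ol{O_G(y)},G)}\le H(\delta)$ whenever $D_{\mathrm{Weyl}}(x,y)<\delta<1/2$.

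The step you flag as the main obstacle causes no difficulty. The convention consistent with the rest of the paper is that a F\o lner sequence satisfies $\#(gF_n\symdiff F_n)/\#F_n\to 0$. This is what the uniform ergodic theorem for $\rho_g(\xi)=\tau(g)\xi$ needs, and what the Ornstein--Weiss lemma needs for the right-invariant function $F\mapsto \log\#\mathcal{B}_F(X)$ (words occur at right translates $Fh$). Since $\#(gFh\symdiff Fh)=\#(gF\symdiff F)$, right translates of F\o lner sets are F\o lner with the same defect. Hence $(F_{n_k}h_k)_k$ is a F\o lner sequence and your contradiction argument goes through verbatim.

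No detour through a $\lim_n\sup_h$ formulation is needed. Your argument is precisely the proof that $\limsup_n\sup_h$ of the local disagreement density is at most $D_{\mathrm{Weyl}}$, which is the only direction you use.

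Steps 2 and 3 are fine as written. The one fact used implicitly there is that the entropy limit exists and does not depend on the F\o lner sequence, so both orbit closures can be evaluated along the same $(F_n)$; the paper states this just before the theorem.
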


\section{Residually finite groups and odometers}\label{sec: odometers}
We let $1_G$ denote the neutral element of the group $G$
We call $G$ \textbf{residually finite} if for all $g\in G\setminus \{1_G\}$ there exists a finite group $F$ and a group homomorphism $\varphi: G\to F$ such that $\varphi(g)\neq 1_F$.
Since we assume $G$ to be countable, it is easy to see that $G$ is residually finite if and only if there exists a decreasing sequence $(\Gamma_n)_{n\in\N}$ of finite index normal subgroups $\Gamma_n$ of $G$ such that one has $\bigcap_{n\in\N}\Gamma_n = \{1_G\}$.
Prominent examples of countably infinite, residually finite groups include free groups (on countably many generators) and $\Z^d$.\\
Now let $(\Gamma_n)_{n\in\N}$ be a decreasing sequence of finite index subgroups of $G$ (not necessarily normal) such that $\bigcap_{n\in\N}\Gamma_n = \{1_G\}$.
Then, we can define the associated \textbf{$G$-odometer} $\overleftarrow{G}$ as
\[ \overleftarrow{G} = \setv{(g_n\Gamma_n)_{n\in\N}\in \prod_{n\in\N}G/\Gamma_n}{\forall n\in\N: g_{n+1}\Gamma_n = g_n\Gamma_n}.\]
In other words, if one lets $\phi_n: G/\Gamma_{n+1}\to G/\Gamma_n$ denote the canonical projection, $\overleftarrow{G}$ is then the inverse limit of the inverse system $((G/\Gamma_n)_{n\in\N},(\phi_n)_{n\in\N})$.
Note that the set $\prod_{n\in\N}G/\Gamma_n$ equipped with the product topology (of the discrete topologies on the factors) turns into a compact, metrizable, topological space and $\overleftarrow{G}$ into a compact subspace.
The map $\tau: G \to \overleftarrow{G},\: g\mapsto (g\Gamma_n)_{n\in\N}$ is clearly injective with dense range.
If for $g\in G$ and $\xi = (g_n\Gamma_n)_{n\in\N}$ we define $\rho_g(\xi) = (gg_n\Gamma_n)_{n\in\N}$, then $(\overleftarrow{G},\rho,G)$ is a minimal, equicontinuous system.\\
Furthermore, whenever each subgroup $\Gamma_n$ is normal, each quotient $G/\Gamma_n$ is again a group.
Hence, $\prod_{n\in\N} G/\Gamma_n$ equipped with pointwise composition is a group as well and it is straightforward to verify that $\overleftarrow{G}$ is a compact subgroup.
Thus, $(\overleftarrow{G},\tau)$ is a metric group compactification of $G$ in this case.
For further reading, see \cite{CortezPetite2008Odometers}\\[2pt]
It turns out that for later constructions a different representation of odometers is convenient.
To achieve this alternative representation, we first need a suitable sequence of finite subsets of $G$.
Recall that for a (normal) subgroup $\Gamma$ of $G$ one calls a subset $D\tm G$ a \textbf{fundamental domain} of $G/\Gamma$ if the mapping $D\to G/\Gamma,\: g\mapsto g\Gamma$ is a bijection.
We now assume that each subgroup $\Gamma_n$ is normal.
The following is a straightforward consequence of \cite[Lem.\ 5]{CortezPetite2014Invariant}.
\begin{lem}\label{lem: good fundamental domains}
    Let $G$ be a countably infinite, residually finite, amenable group and let $(\Gamma_n)_{n\in\N}$ be a decreasing sequence of finite index normal subgroups of $G$ such that $\bigcap_{n\in\N} \Gamma_n = \{1_G\}$.
    Then, there exists a strictly increasing sequence $(n_k)_{k\in\N}$ of positive integers and a left F\o lner sequence $(D_k)_{k\in\N}$ in $G$ such that
	\begin{enumerate}
		\item $1_G\in D_1\tm D_2\tm\ldots$ and $D_k$ is a fundamental domain of $G/\Gamma_{n_k}$ for all $k\in\N$.
		\item $\bigcup_{k\in\N} D_k = G$.
		\item $D_l = \biguplus_{h\in D_l \cap \Gamma_{n_k}} D_k h$ for all $l>k$.
	\end{enumerate}
\end{lem}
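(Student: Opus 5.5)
We will derive the lemma from \cite[Lem.\ 5]{CortezPetite2014Invariant}. As we read it, that result gives the following: for a countably infinite, residually finite, amenable $G$ with a decreasing sequence of finite index normal subgroups $(\Gamma_n)_{n\in\N}$ and $\bigcap_n\Gamma_n=\{1_G\}$, there exist a strictly increasing $(n_k)_{k\in\N}$ and a F\o lner sequence $(D_k)_{k\in\N}$ such that:
\begin{enumerate}
    \item each $D_k$ is a fundamental domain of $G/\Gamma_{n_k}$ with $1_G\in D_k$;
    \item $D_k\tm D_{k+1}$;
    \item $D_{k+1}=\biguplus_{h\in D_{k+1}\cap\Gamma_{n_k}} D_k h$.
\end{enumerate}
The plan is to identify precisely which of (1)--(3) are given there and derive the remaining ones by elementary arguments.

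First, we upgrade the one-step tiling property (3) to all $l>k$ by induction on $l$. Assume $D_{l}=\biguplus_{h\in D_l\cap\Gamma_{n_k}} D_k h$. Write $D_{l+1}=\biguplus_{h'\in D_{l+1}\cap\Gamma_{n_l}} D_l h'$ and substitute to get $D_{l+1}=\biguplus D_k hh'$. Since $\Gamma_{n_l}\tm\Gamma_{n_k}$, each product $hh'$ lies in $\Gamma_{n_k}$, and it lies in $D_{l+1}$ because $1_G\in D_k$. Conversely, we must show that every $t\in D_{l+1}\cap\Gamma_{n_k}$ arises as such a product. The tile $D_k hh'$ containing $t$ gives $t=d\,hh'$ with $d\in D_k$. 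Then $d\in\Gamma_{n_k}$, and since $D_k$ is a fundamental domain of $G/\Gamma_{n_k}$ containing $1_G$, this forces $d=1_G$. Disjointness of the union is inherited from the two decompositions.

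Second, we prove $\bigcup_k D_k=G$. If the cited lemma does not give this directly, we argue as follows. Given $g\in G$, the sets $D_k$ are nested fundamental domains of $G/\Gamma_{n_k}$, and $\bigcap_k\Gamma_{n_k}=\{1_G\}$. Let $d_k\in D_k$ be the representative of $g\Gamma_{n_k}$. By the tiling, $d_{k+1}=d_k h$ with $h\in\Gamma_{n_k}$, so the $d_k$ need not stabilize in general. This is why we expect exhaustion has to be arranged within the construction itself, e.g.\ by passing to a subsequence and enlarging $D_k$ to contain a prescribed finite set $\{g_1,\dots,g_k\}$ from an enumeration of $G$. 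We expect the cited lemma already builds this in, since its construction chooses $D_{k+1}$ to contain large F\o lner sets; it then suffices to choose those F\o lner sets to exhaust $G$.

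The main obstacle is this second step: ensuring exhaustion while preserving the fundamental domain and tiling structure. If it is not explicit in the reference, we would redo the inductive construction. Given $D_k$, pick $n_{k+1}$ so large that $\Gamma_{n_{k+1}}$ avoids the finite set $F_{k+1}F_{k+1}^{-1}$, where $F_{k+1}$ is a large F\o lner set containing $g_{k+1}$ and $D_k$. Then take $D_{k+1}$ to be the union of the translates $D_k h$, $h\in\Gamma_{n_k}$, that meet a suitable enlargement of $F_{k+1}$, completed to a full set of coset representatives modulo $\Gamma_{n_{k+1}}$. The F\o lner property of $(D_k)$ then follows from the F\o lner property of $D_k$ together with the small boundary proportion of $F_{k+1}$.
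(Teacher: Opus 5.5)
Your route is the paper's. The lemma is taken from \cite[Lem.~5]{CortezPetite2014Invariant}, which supplies a F\o lner sequence of nested fundamental domains $1_G\in D_k\subseteq D_{k+1}$ of $G/\Gamma_{n_k}$ with the one-step tiling $D_{k+1}=\biguplus_{h\in D_{k+1}\cap\Gamma_{n_k}}D_kh$. The only thing left to derive is the tiling for all $l>k$, and your induction does this correctly. In particular, you correctly use $1_G\in D_k$ together with the fundamental-domain property to force $d=1_G$, which identifies the index set with $D_{l+1}\cap\Gamma_{n_k}$. The exhaustion $G=\bigcup_k D_k$ is also among the conclusions of that lemma, so your second step is unnecessary. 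You are right that it cannot be deduced from the other properties.

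Do not rely on your fallback construction, though, because it does not produce a F\o lner sequence. There is first an ordering slip: $n_{k+1}$ must be chosen after the union $E$ of the tiles $D_kh$ that meet the enlarged $F_{k+1}$, so that $\Gamma_{n_{k+1}}\cap E^{-1}E=\{1_G\}$. Avoiding $F_{k+1}F_{k+1}^{-1}$ is not enough.

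The real problem is that the completion step is uncontrolled. The size $\#D_{k+1}=[G:\Gamma_{n_{k+1}}]$ bears no relation to $\#F_{k+1}$, so the completing tiles may make up almost all of $D_{k+1}$. For a union of arbitrarily placed right translates $D_kh$, the only available bound is $\#(gD_{k+1}\symdiff D_{k+1})/\#D_{k+1}\le\#(gD_k\symdiff D_k)/\#D_k$. This bound is essentially attained when the completing tiles are spread apart, so the invariance need not improve from one step to the next.

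The missing ingredient is Weiss' theorem, proved via Ornstein--Weiss quasi-tilings: for every finite $K$ and $\varepsilon>0$ and $n$ large enough, $G/\Gamma_n$ has a $(K,\varepsilon)$-invariant fundamental domain $F$. With that in hand, you build $D_{k+1}$ as follows:
\begin{enumerate}
\item For each class of $\Gamma_{n_k}/\Gamma_{n_{k+1}}$, take the translate $D_kv\gamma$ with $\gamma\in\Gamma_{n_{k+1}}$ that lies inside $F$, when such a translate exists (it is then unique).
\item Force the lifts of the class of $1_G$ and of the tile containing $g_{k+1}$.
\end{enumerate}
The resulting $D_{k+1}$ differs from $F$ only near its $D_kD_k^{-1}$-boundary, plus at most $2\#D_k$ further points.
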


It follows from \cite[Lem.\ 2]{CortezPetite2008Odometers} that the odometer which is associated to the subsequence $(\Gamma_{n_k})_{k\in\N}$ is isomorphic to $\overleftarrow{G}$ as a topological group.
Thus, we can assume w.l.o.g.\ that $n_k = k$ for all $k\in\N$.\\
The aforementioned alternative representation of $\overleftarrow{G}$ is heavily inspired by the classical "carry-over rule" for adding machines.
We shall only outline how to obtain this representation, while the details can be found in \cite[Section 3.2]{CortezDrewloGomezJager2025Model}.
Observe that the fact that each $D_n$ is a fundamental domain for $G/\Gamma_n$, every $g\in G$ has a unique factorization $g = \psi_n(g)\varphi_n(g)$ with $\psi_n(g)\in D_n$ and $\varphi_n(g)\in \Gamma_n$.
Moreover, iterating this factorization and using (2) of above lemma, one can find a unique positive integer $N(g)$ such that
\[ g = \prod_{j=1}^{N(g)+1} b_j ,\]
with $b_j \in D_j\cap \Gamma_{j-1}$ for all $j=1,\ldots, N(g)+1$.
Here we use the convention $\Gamma_0 = G$ and $\prod_{j=1}^N b_j = b_1 b_2\ldots b_N$.
This composition is furthermore unique and given by $b_j = \varphi_{j-1}(\psi_j(g))$ (see \cite[Lem.\ 3.5]{CortezDrewloGomezJager2025Model}).
We now define $\pi_j: \overleftarrow{G}\to D_j\cap \Gamma_{j-1},\: (g_n\Gamma_n)_{n\in\N} \mapsto \varphi_{j-1}(\psi_j(g_j))$.
It is not hard to see that $\pi_j$ is well-defined and the mapping
\[\Pi: \overleftarrow{G}\to \prod_{n\in\N}(D_n\cap \Gamma_{n-1}),\: \xi \mapsto (\pi_n(\xi))_{n\in\N}\]
is a bijection (c.f.\ \cite[Lem.\ 3.7 and Rem.\ 3.11]{CortezDrewloGomezJager2025Model}).
The space $\overleftarrow{G_*}=\prod_{n\in\N}(D_n\cap\Gamma_{n-1})$ is exactly the alternative representation we sought for.
It follows from \cite[Lem.\ 3.7(2)]{CortezDrewloGomezJager2025Model} that $\Pi$ maps cylinder sets of level $n$ in $\overleftarrow{G}$ to cylinder sets of level $n$ in $\overleftarrow{G_*}$.
In particular, $\Pi$ is a homeomorphism if one equips $\overleftarrow{G_*}$ with the product topology.
Moreover, observe that the embedding $\tau_* = \Pi\circ \tau: G \to \overleftarrow{G_*}$ satisfies for all $\xi\in \overleftarrow{G_*}$ that
\begin{equation}\label{eq: chara image of tau_*}
    \xi \in \tau_*(G) \text{ if and only if } \xi = (\xi_n)_{n\in\N}\text{ is eventually constant } 1_G.
\end{equation}
It only remains to clarify how the group action of $\overleftarrow{G}$ is translated via $\Pi$.
It turns out that for our purposes, we need not know the detailed formula on how to compute the product $\zeta = \xi \cdot \eta$ for two elements $\xi,\eta\in \overleftarrow{G_*}$.
For us, the following consequence of \cite[Lem.\ 3.8]{CortezDrewloGomezJager2025Model} suffices.
\begin{lem}\label{lem: carry-over rule light version}
    Let $\xi=(\xi_n)_{n\in\N}, \eta=(\eta_n)_{n\in\N}\in \overleftarrow{G_*}$ and let $\xi\cdot \eta = \zeta = (\zeta_n)_{n\in\N}$, where $\cdot$ denotes the group action on $\overleftarrow{G_*}$ inherited from $\overleftarrow{G}$.
    Then, we have for all $n\in\N$
    \begin{enumerate}
        \item $\zeta_n$ only depends on $\xi_m$ and $\eta_m$ with $m\leq n$.
        \item If for all $m<n$ we have $\xi_m = 1_G$ or $\eta_m= 1_G$, then $\zeta_n^{-1}(\xi_n\eta_n)\in \Gamma_n$.
    \end{enumerate}
\end{lem}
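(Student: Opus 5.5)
Both parts will come from how $\Pi$ transports the group structure. For $\xi\in\overleftarrow{G}$ write $\xi_n\in G/\Gamma_n$ for its $n$-th coordinate, so that
\[\Pi(\xi)_n=\pi_n(\xi)=\varphi_{n-1}(\psi_n(g_n)),\]
where $g_n$ is any representative of $\xi_n$. The key fact is that the first $n$ coordinates $(\pi_1(\xi),\dots,\pi_n(\xi))$ are in bijection with $G/\Gamma_n$. Indeed, $\psi_n(g_n)\in D_n$ is the canonical representative of $\xi_n$. Iterating the factorization within $D_n$, via Lemma~\ref{lem: good fundamental domains}(3), gives
\[\psi_n(g_n)=b_1b_2\cdots b_n \quad\text{with } b_j=\pi_j(\xi)\in D_j\cap\Gamma_{j-1}.\]
This is the finite version of the unique decomposition quoted from \cite[Lem.\ 3.5]{CortezDrewloGomezJager2025Model}. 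Hence $\xi_n=b_1\cdots b_n\Gamma_n$.

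For $(1)$: the product $\zeta=\xi\cdot\eta$ satisfies $\zeta_n=\xi_n\eta_n$ in the group $G/\Gamma_n$; this uses normality of $\Gamma_n$. By the observation above, $\xi_n$ is determined by $\xi_1,\dots,\xi_n$ (the $\Pi$-coordinates), and likewise for $\eta$. So $\zeta_n$, as a coset, depends only on the first $n$ coordinates of $\xi$ and $\eta$. Then $\pi_n(\zeta)=\varphi_{n-1}(\psi_n(z))$ for any representative $z$ of that coset, which gives $(1)$.

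For $(2)$, the plan is to compute $\zeta_n=\psi_n(z)$ explicitly. Put $x=\xi_1\cdots\xi_{n-1}$ and $y=\eta_1\cdots\eta_{n-1}$, using $\Pi$-coordinates. Then the product coset is represented by $x\xi_n\,y\eta_n$. The hypothesis says that at each level $m<n$ one of $\xi_m,\eta_m$ is trivial. I would use this to argue that $x\cdot y$ is again a "digit product" $c_1\cdots c_{n-1}$ with $c_m\in\{\xi_m,\eta_m\}$ and no carries. This is the main obstacle, since the digits are interleaved and noncommutative.

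To handle it, I would first check that $x\,y\,\Gamma_{n-1}$ has canonical representative in $D_{n-1}$ given by such a product. The route is induction on $n$, using Lemma~\ref{lem: good fundamental domains}(3) and the fact that for $m<k$ the element $\eta_k\in\Gamma_{k-1}\subseteq\Gamma_m$ can be commuted past level-$m$ digits modulo $\Gamma_m$ by normality. If this exact form is too strong, I would instead invoke \cite[Lem.\ 3.8]{CortezDrewloGomezJager2025Model} directly: it states that the carry into level $n$ vanishes when no level below $n$ has both digits nontrivial.

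Granting no carry, the level-$n$ component of $x\xi_n y\eta_n$ is $\varphi_{n-1}$ of its $D_n$-representative. Since $\Gamma_{n-1}/\Gamma_n$ is a group and $\xi_n,\eta_n\in\Gamma_{n-1}$, moving $y$ past $\xi_n$ changes $\xi_n$ only by conjugation. Modulo $\Gamma_n$, the digit is then $\xi_n\eta_n$ up to an element of $\Gamma_n$, so $\zeta_n^{-1}\xi_n\eta_n\in\Gamma_n$. The conjugation needs care here, since $y$ need not centralize $\Gamma_{n-1}/\Gamma_n$. I expect that in \cite{CortezDrewloGomezJager2025Model} this is absorbed into their formula, and I would cite Lemma 3.8 there for the precise statement.
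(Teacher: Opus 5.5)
Your argument for (1) is correct and self-contained. The identity $\psi_n(g_n)=\pi_1(\xi)\cdots\pi_n(\xi)$ shows that the first $n$ carry-over digits encode the coset $g_n\Gamma_n$, and the $n$-th inverse-limit coordinate of $\xi\cdot\eta$ is the product of cosets. The paper gives no argument for either part; it only cites \cite[Lem.\ 3.8]{CortezDrewloGomezJager2025Model}.

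For (2) there is a genuine gap, exactly at the step you flag. Falling back on the citation does not close it, because with the symmetric hypothesis (2) is false for non-abelian $G$. Writing out your computation gives $\zeta_n\Gamma_n=\psi_{n-1}(xy)^{-1}x\,\xi_n\,y\,\eta_n\Gamma_n$. If $\xi_m=1_G$ for all $m<n$, then $x=1_G$, $\psi_{n-1}(y)=y$, and $\zeta_n\Gamma_n=(y^{-1}\xi_n y)\eta_n\Gamma_n$. So you would need conjugation by $y$ to act trivially on $\Gamma_{n-1}/\Gamma_n$, and it need not.

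A concrete counterexample:
\begin{itemize}
\item Take $G=\langle r,s\mid s^2=1,\ srs^{-1}=r^{-1}\rangle$ and $\Gamma_n=\langle r^{3^n}\rangle$.
\item Pick $c\in D_2\cap\Gamma_1$ with $c\Gamma_2=r^3\Gamma_2$, and $d\in D_1$ with $d\Gamma_1=s\Gamma_1$.
\item Set $\xi=\tau_*(c)=(1_G,c,1_G,\ldots)$ and $\eta=\tau_*(d)=(d,1_G,1_G,\ldots)$.
\end{itemize}
The hypothesis of (2) holds for $n=2$ since $\xi_1=1_G$. But $\zeta=\tau_*(cd)=\tau_*(d\cdot d^{-1}cd)$ has $\zeta_1=d$ and $\zeta_2\Gamma_2=d^{-1}cd\,\Gamma_2=r^{-3}\Gamma_2$. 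Hence $\zeta_2^{-1}\xi_2\eta_2\in r^6\Gamma_2$, while $r^6\notin\Gamma_2$.

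The same happens in the Heisenberg group with $\Gamma_n=\{(a,b,c):q^{n+1}\mid a,b,c\}$, which is an admissible choice in Section~\ref{sec: krieger/ac}. Your intermediate ``no carries'' claim fails for the same reason: for $n=3$ and $\xi_1=\eta_2=1_G$, the level-$2$ digit of $\xi_2\eta_1$ represents $\eta_1^{-1}\xi_2\eta_1\Gamma_2$, which in general differs from $\xi_2\Gamma_2$.

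What your computation does prove is a one-sided version. If $\eta_m=1_G$ for all $m<n$, then $y=1_G$, $x\in D_{n-1}$ is already the canonical representative, and $\zeta_n\Gamma_n=\xi_n\eta_n\Gamma_n$ immediately. For abelian $G$ the symmetric statement also holds, since $xy=\prod_{m<n}\xi_m\eta_m\in D_{n-1}$ by Lemma~\ref{lem: good fundamental domains}(3). In the non-abelian case, a correct version of (2) has to keep track of the conjugation.

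The one-sided version covers all uses in Lemma~\ref{lem: Lipschitz equiv} except the choice of $h$ in the second case. There the level-$(n+1)$ digit of the product is $\xi_n^{-1}h\xi_n\xi_{n+1}$ modulo $\Gamma_{n+1}$. So $h$ should be chosen with $\xi_n^{-1}h\xi_n\xi_{n+1}\in A_{n+1}\Gamma_{n+1}$, which is always possible.
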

It turns out that for certain arguments, one representation is more convenient than the other.
In what follows, $\overleftarrow{G}$ will denote the inverse limit representation and $\overleftarrow{G_*}$ the alternative (\textit{carry-over}) representation of an odometer.
Whenever the representation does not matter to us, we will default to the notation $\overleftarrow{G}$.

\section{Toeplitz subshifts}\label{sec: Toeplitz}
A particularly interesting class of symbolic systems, which has been studied extensively in the context of residually finite groups, is the class of Toeplitz subshifts.
For simplicity we restrict ourselves to subshifts over the alphabet $\{0,1\}$, but all considerations remain true for larger alphabets (see \cite{CortezPetite2008Odometers} for further reading).
We define for an array $x$, a subgroup $\Gamma$ of $G$ and a letter $\alpha \in \{0,1\}$ the set $\Per(x,\Gamma,\alpha) = \{g\in G\,|\, \forall \gamma\in \Gamma: x(g\gamma) = \alpha\}$.
Moreover, we let $\Per(x,\Gamma) = \bigcup_{\alpha\in \{0,1\}} \Per(x,\Gamma,\alpha)$.
If for all $g\in G$, there exists a finite index subgroup $\Gamma$ such that $g\in \Per(x,\Gamma)$, we call the array $x$ a \textbf{Toeplitz array} and its orbit closure $\ol{O_G(x)}$ a \textbf{Toeplitz subshift}.
We call a subgroup $\Gamma$ a \textbf{period} of $x$ if $\Per(x,\Gamma)\neq \emptyset$.
If additionally for all $g\in G$ the following implication holds
\[ (\forall\alpha\in \{0,1\}:\: \Per(x,\Gamma,\alpha)\tm \Per(gx,\Gamma,\alpha)) \implies g\in \Gamma,\]
we call the period $\Gamma$ \textbf{essential}.
Let $\mathbf{\Gamma}=(\Gamma_n)_{n\in\N}$ a decreasing sequence of finite index subgroups.
We say that $x$ is \textbf{relative to $\mathbf{\Gamma}$}, if $G = \bigcup_{n\in\N}\Per(x,\Gamma_n)$.
If additionally each $\Gamma_n$ is an essential period of $x$, we call $\mathbf{\Gamma}$ a \textbf{period structure} for $x$.
We denote the set of Toeplitz arrays which are relative to $\mathbf{\Gamma}$ by $\mathrm{Rel}(\mathbf{\Gamma})$ and similarly the set of Toeplitz arrays with period structure $\mathbf{\Gamma}$ by $\mathrm{Toep}(\mathbf{\Gamma})$.
Let $\overleftarrow{G}$ be the $G$-odometer associated to $\mathbf{\Gamma}$.
\begin{prop}[{\cite[Thm.\ 2 and Prop.\ 5]{CortezPetite2008Odometers}}]\label{prop: period structure almost automorphic}
    The set $\mathrm{Toep}(\mathbf{\Gamma})$ is exactly the set of arrays which are almost automorphic over $\overleftarrow{G}$.
\end{prop}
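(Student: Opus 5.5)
The plan is to prove the two inclusions separately. Throughout, write $X=\ol{O_G(x)}$. I use that for normal $\Gamma_n$ the shift satisfies $(\gamma y)(h)=y(h\gamma)$ and that $\tau(\gamma)$ fixes every level-$n$ cylinder of $\overleftarrow{G}$ whenever $\gamma\in\Gamma_n$.

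\emph{Toep$(\mathbf{\Gamma})$ $\Rightarrow$ almost automorphic over $\overleftarrow{G}$.} Let $x\in\mathrm{Toep}(\mathbf{\Gamma})$. For each $n$, call the pair $(\Per(y,\Gamma_n,0),\Per(y,\Gamma_n,1))$ the $\Gamma_n$-skeleton of $y$.
\begin{enumerate}
\item Show that for every $y\in X$ there is a coset $g\Gamma_n$ such that the $\Gamma_n$-skeleton of $y$ contains that of $gx$. This follows by approximating $y$ with shifts $g_kx$ and using that $G/\Gamma_n$ is finite.
\item Show that this coset is unique. Here I use essentiality of $\Gamma_n$. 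Since each skeleton is $\Gamma_n$-invariant, a comparison of densities shows that the containment is in fact an equality of skeletons.
\item Define $\pi(y)=(g_n\Gamma_n)_{n\in\N}$ (with inverses inserted as dictated by the left/right conventions). Check that $\pi$ is compatible between levels, continuous (the level-$n$ coordinate is locally constant on $X$), equivariant, and satisfies $\pi(x)=(\Gamma_n)_n$.
\item If $\pi(y)=\pi(x)$, then $y$ and $x$ have the same $\Gamma_n$-skeletons for every $n$. Because $G=\bigcup_n\Per(x,\Gamma_n)$, this forces $y=x$.
\item Thus $\pi$ is an almost one-to-one extension of the minimal equicontinuous system $\overleftarrow{G}$. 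It remains to see that $\overleftarrow{G}$ is the MEF. The map $\pi$ factors as $\pi=\psi\circ\pi_{\mathrm{eq}}$. Any two points of $X_{\mathrm{eq}}$ with the same $\psi$-image have fibres that are jointly proximal near the singleton fibre. Equicontinuity of $X_{\mathrm{eq}}$ then forces them to be equal, so $\psi$ is a conjugacy and $x$ is almost automorphic over $\overleftarrow{G}$.
\end{enumerate}

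\emph{Almost automorphic over $\overleftarrow{G}$ $\Rightarrow$ Toep$(\mathbf{\Gamma})$.} Let $\pi:X\to\overleftarrow{G}$ be the factor map with $\pi^{-1}\{\pi(x)\}=\{x\}$. After a rotation, assume $\pi(x)=\tau(1_G)$.
\begin{enumerate}
\item By compactness, the singleton fibre gives upper semicontinuity: for every finite window $F\tm G$ there is $n$ such that every $y$ with $\pi(y)$ in the level-$n$ cylinder $C_n$ of $\pi(x)$ agrees with $x$ on $F$.
\item For $\gamma\in\Gamma_n$ we have $\pi(\gamma x)=\tau(\gamma)\pi(x)\in C_n$. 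Applying step 1 with $F=\{g\}$ gives $x(g\gamma)=x(g)$. Hence every $g$ lies in some $\Per(x,\Gamma_n)$, so $x$ is Toeplitz and relative to $\mathbf{\Gamma}$.
\item Show that each $\Gamma_n$ is an essential period. Suppose the $\Gamma_n$-skeleton of $x$ is contained in that of $gx$. I would first show the sharper statement that for some $m\ge n$, a point $y\in X$ satisfies $\pi(y)\in C_n$ whenever its $\Gamma_m$-skeleton contains that of $x$. Combined with the skeleton map of the first direction, this would give $\tau(g)\pi(x)\in C_n$, i.e.\ $g\in\Gamma_n$.
\end{enumerate}

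The step I expect to be the main obstacle is step 3 of the second direction, and possibly the passage between $\Gamma_n$ and $\Gamma_m$ there. If a direct argument fails, I would pass to a subsequence of $\mathbf{\Gamma}$, which gives an isomorphic odometer by the remark after Lemma~\ref{lem: good fundamental domains}, so that the skeleton of $x$ at level $\Gamma_{n+1}$ already pins down the level-$n$ coset. I would then verify essentiality for the subsequence and transfer it back, since being a period, and essentiality, are properties of a single subgroup.
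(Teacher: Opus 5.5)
\textbf{Verdict.} Your first direction works for normal $\Gamma_n$. Step~3 of the converse cannot be completed, because with $\mathrm{Toep}(\mathbf{\Gamma})$ as defined in the paper the claim it targets is false.

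\textbf{First direction.} It is essentially the argument behind \cite[Thm.~2]{CortezPetite2008Odometers}; the paper itself only cites that source and gives no proof. Two small points:
\begin{enumerate}
\item In step~2, the ``comparison of densities'' also needs the reverse inequality $D(y,\Gamma_n)\le D(x,\Gamma_n)$. You get it from minimality of $X$ (a Toeplitz array is uniformly recurrent), by running step~1 with the roles of $x$ and $y$ exchanged.
\item You use normality throughout: the skeleton of $gx$ depends only on $g\Gamma_n$, and the rotation in the converse needs a group structure. The statement, however, allows arbitrary finite index $\Gamma_n$.
\end{enumerate}

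\textbf{Why step~3 does not close.} Your sharper statement is true. If $y_m\in X$ have $\Gamma_m$-skeletons containing that of $x$ but $\pi(y_m)\notin C_n$, then any limit point agrees with $x$ on every $\Per(x,\Gamma_{m_0})$, hence equals $x$. This contradicts openness of $C_n$. But the statement is about $\Gamma_m$-skeletons, while essentiality of $\Gamma_n$ starts from a containment of $\Gamma_n$-skeletons, which tells you nothing at level $m$.

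\textbf{Counterexample.} Almost automorphy over $\overleftarrow{G}$ does not force every $\Gamma_n$ to be essential. Take $G=\Z$ and $\Gamma_n=2^n\Z$. Let $y$ be a non-periodic Toeplitz sequence with period structure $(2^n\Z)_n$, and put $x(4j)=1$, $x(4j+2)=0$, $x(2j+1)=y(j)$.
\begin{enumerate}
\item One checks that $(2^{n+1}\Z)_{n\in\N}$ is a period structure of $x$.
\item By your first direction, and since dropping $\Gamma_1$ does not change the odometer, $x$ is almost automorphic over the $2$-adic odometer $\overleftarrow{G}$.
\item Yet $\Per(x,2\Z)=\emptyset$, so $\Gamma_1$ is not even a period, and $x\notin\mathrm{Toep}(\mathbf{\Gamma})$.
\end{enumerate}

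\textbf{Why the fallback fails.} Essentiality is a property of each subgroup separately, so nothing transfers back to the omitted $\Gamma_n$. Moreover, one can build examples with $\Gamma_n=6^n\Z$ whose $6^n\Z$-skeleton equals the $2^n3^{n-1}\Z$-skeleton for all $n$. Then no $\Gamma_n$ is essential, so even the members of a subsequence need not be.

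\textbf{What your argument does prove.}
\begin{enumerate}
\item Steps~1--2 give $x\in\mathrm{Rel}(\mathbf{\Gamma})$.
\item Via the sharper statement, the essential periods $\widetilde{\Gamma}_m\mt\Gamma_m$ of Lemma~\ref{lem: essential period with same periodic points} satisfy $\widetilde{\Gamma}_m\tm\Gamma_n$ for $m=m(n)$. Indeed, every $g\in\widetilde{\Gamma}_m$ preserves the $\Gamma_m$-skeleton of $x$, so $\tau(g)=\pi(gx)\in C_n$.
\item Hence $(\widetilde{\Gamma}_m)_m$ is a period structure of $x$ interleaved with $\mathbf{\Gamma}$, and its odometer is conjugate to $\overleftarrow{G}$.
\end{enumerate}
This is the correct form of the converse. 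The proposition has to be read modulo this equivalence of period structures.
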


For a finite index subgroup $\Gamma$ of $G$ we let $D(x,\Gamma) = \frac{\# (D\cap \Per(x,\Gamma))}{\# D}$, where $D$ is a fundamental domain of $G/\Gamma$.
Note that this definition is indeed independent of the choice of $D$.
Moreover, if $\Gamma^\prime \tm \Gamma$ is another subgroup of finite index, then it is not hard to see that one can choose a fundamental domain $D$ of $G/\Gamma$ and a fundamental domain $D^\prime$ of $G/\Gamma^\prime$ such that
\[ D^\prime = \bigcup_{\gamma\in D^\prime \cap \Gamma} D\gamma\]
(c.f.\ \cite[Lem.\ 3]{CortezPetite2014Invariant}).
Then, it easily follows that $D(x,\Gamma^\prime)\geq D(x,\Gamma)$ (see \cite{CecchiCortezGomez2024Invariant}).
We now define $\mathcal{D}(x) = \sup\{D(x,\Gamma) : \Gamma \text{ finite index subgroup of } G\} \in (0,1]$.
If $\mathcal{D}(x)=1$, we call the Toeplitz array $x$ \textbf{regular} and otherwise \textbf{irregular}.
Below, we will establish more convenient ways of computing $\mathcal{D}(x)$, for example when given a period structure $\mathbf{\Gamma}$ of $x$.
The fact that those approaches indeed yield the quantity $\mathcal{D}(x)$ as defined above, is folklore knowledge.
But since we could not find a proof of this, we choose to include it for the convenience of the reader.
\begin{lem}[{c.f.\ \cite[Lem.\ 6]{CortezPetite2008Odometers}}]\label{lem: essential period with same periodic points}
    Let $\Gamma$ be a period $x$ and assume that $\Gamma$ is normal.
    Then there exists some essential period $\widetilde{\Gamma}\mt \Gamma$ of $x$ such that $\Per(x,\widetilde{\Gamma},\alpha) = \Per(x,\Gamma,\alpha)$ for all $\alpha$.
    In particular, $D(x,\widetilde{\Gamma}) = D(x,\Gamma)$.
\end{lem}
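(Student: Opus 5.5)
We take $\widetilde{\Gamma}$ to be the stabilizer of the periodic skeleton:
\[ \widetilde{\Gamma} = \setv{g\in G}{\forall \alpha\in\{0,1\}:\ \Per(x,\Gamma,\alpha) \tm \Per(gx,\Gamma,\alpha)}. \]
The proof then consists of four verifications: $\widetilde{\Gamma}$ is a subgroup containing $\Gamma$, it has the same periodic sets as $\Gamma$, it is essential, and the densities agree.

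\emph{Step 1 (reformulation).} We unwind the shift. Since $(gx)(h\gamma)=x(h\gamma g)=x(hg\cdot g^{-1}\gamma g)$ and $\Gamma$ is normal, we get $h\in\Per(gx,\Gamma,\alpha)$ if and only if $hg\in \Per(x,\Gamma,\alpha)$. Hence
\[ \Per(gx,\Gamma,\alpha)=\Per(x,\Gamma,\alpha)g^{-1}. \]
Each $\Per(x,\Gamma,\alpha)$ is a union of right cosets of $\Gamma$, hence a finite union of cosets because $\Gamma$ has finite index. Therefore $\Per(x,\Gamma,\alpha)\tm \Per(x,\Gamma,\alpha)g^{-1}$ forces equality by cardinality in $G/\Gamma$. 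So $\widetilde{\Gamma}$ is exactly the set of $g$ with $\Per(x,\Gamma,\alpha)g=\Per(x,\Gamma,\alpha)$ for both $\alpha$. This is visibly a subgroup, and it contains $\Gamma$ since these sets are right $\Gamma$-invariant. Consequently $\widetilde{\Gamma}$ has finite index.

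\emph{Step 2 (same periodic sets).} Because $\widetilde{\Gamma}\mt\Gamma$, we have $\Per(x,\widetilde{\Gamma},\alpha)\tm\Per(x,\Gamma,\alpha)$. For the converse, take $h\in\Per(x,\Gamma,\alpha)$ and $\tilde\gamma\in\widetilde{\Gamma}$. Then $h\tilde\gamma\in \Per(x,\Gamma,\alpha)\tilde\gamma=\Per(x,\Gamma,\alpha)$, so $x(h\tilde\gamma)=\alpha$. Hence $h\in \Per(x,\widetilde{\Gamma},\alpha)$. In particular $\Per(x,\widetilde\Gamma)=\Per(x,\Gamma)\neq\emptyset$, so $\widetilde{\Gamma}$ is a period.

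\emph{Step 3 (essential).} This is the main subtlety. Suppose $\Per(x,\widetilde\Gamma,\alpha)\tm\Per(gx,\widetilde\Gamma,\alpha)$ for both $\alpha$. Step 1 used only $\Gamma$ normal, so to repeat it for $\widetilde{\Gamma}$ we must first check that $\widetilde{\Gamma}$ is normal.

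For normality, let $k\in G$ and $g\in\widetilde{\Gamma}$. Write $P_\alpha$ for $\Per(x,\Gamma,\alpha)$. We need $P_\alpha k g k^{-1}=P_\alpha$. By normality of $\Gamma$ we have $P_\alpha\tilde\gamma=P_\alpha$, and then $hk g k^{-1}\in P_\alpha$ for $h\in P_\alpha$? This is not obviously true. Rather than force normality, we avoid it by working directly with the identity
\[ \Per(gx,\widetilde\Gamma,\alpha)=\{h: hg\gamma' \ldots\}. \]
Concretely, $h\in \Per(gx,\widetilde\Gamma,\alpha)$ means $x(h\tilde\gamma g)=\alpha$ for all $\tilde\gamma$. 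Taking $\tilde\gamma=1$ shows that $h\in\Per(x,\widetilde\Gamma,\alpha)=P_\alpha$ implies $x(hg)=\alpha$. Applying this with $h$ replaced by $h\gamma$ for $\gamma\in\Gamma$ (and $h\gamma\in P_\alpha$) gives $x(h\gamma g)=\alpha$, that is, $x(hg\cdot g^{-1}\gamma g)=\alpha$. By normality of $\Gamma$ this gives $hg\in P_\alpha$. Thus $P_\alpha g\tm P_\alpha$, hence $P_\alpha g=P_\alpha$ by finiteness, and so $g\in\widetilde{\Gamma}$. Normality of $\widetilde\Gamma$ is not needed.

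\emph{Step 4 (densities).} The density equality follows from Step 2 by counting cosets. Write $m=[\widetilde\Gamma:\Gamma]$. A fundamental domain $D$ of $G/\Gamma$ can be chosen as $\widetilde D\cdot E$, where $\widetilde D$ is a fundamental domain of $G/\widetilde\Gamma$ and $E$ a fundamental domain of $\widetilde\Gamma/\Gamma$; this is the decomposition cited from \cite[Lem.\ 3]{CortezPetite2014Invariant}. Since $\Per(x,\widetilde\Gamma)$ is right $\widetilde\Gamma$-invariant, $\#(D\cap\Per(x,\Gamma))=m\cdot\#(\widetilde D\cap\Per(x,\widetilde\Gamma))$, while $\#D=m\,\#\widetilde D$. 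Dividing gives $D(x,\widetilde\Gamma)=D(x,\Gamma)$.
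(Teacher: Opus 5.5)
Your proof is correct and follows the same route as the paper. The paper takes $\widetilde{\Gamma}$ to be the essential period $K$ from the proof of \cite[Lem.\ 6]{CortezPetite2008Odometers} with $\Gamma'=\Gamma$, which is built as this kind of stabilizer of the periodic sets, and leaves to that reference the verifications you write out in Steps 1--4.

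The only cleanup is in Step 3: drop the abandoned normality discussion and the garbled display. Since $\Gamma\subseteq\widetilde{\Gamma}$ gives $\Per(gx,\widetilde{\Gamma},\alpha)\subseteq\Per(gx,\Gamma,\alpha)$, the hypothesis together with Step 2 yields $\Per(x,\Gamma,\alpha)\subseteq\Per(gx,\Gamma,\alpha)$ for both $\alpha$, so $g\in\widetilde{\Gamma}$ directly by definition.
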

\begin{proof}
    In the proof of \cite[Lem.\ 6]{CortezPetite2008Odometers}, the authors pick a normal subgroup $\Gamma^\prime$ of $G$ which is contained in $\Gamma$ and construct an essential period $K$.
    Since we assume $\Gamma$ to be normal, we can choose $\Gamma^\prime = \Gamma$.
    Following the proof, one sees that in this case one has $\widetilde{\Gamma} := K \mt \Gamma$, and the remaining properties easily follow.\\
    Alternatively one can prove the lemma with a straightforward application of Zorn's Lemma.
\end{proof}

\begin{prop}\label{prop: regularity constant}
    \hspace{.5cm}
    \begin{enumerate}
        \item We have $\mathcal{D}(x) = \sup\{D(x,\Gamma) : \Gamma \text{ is an essential period of } x\}$.
        \item Let $\mathbf{\Gamma} = (\Gamma_n)_{n\in\N}$ be a period structure for $x$.
        Then $\mathcal{D}(x) = \lim_{n\to\infty}D(x,\Gamma_n)$.
        \item Now assume that $x$ is relative to $\mathbf{\Gamma}=(\Gamma_n)_{n\in\N}$ but all the subgroups $\Gamma_n$ are normal.
        Then we still have $\mathcal{D}(x) = \lim_{n\to\infty} D(x,\Gamma_n)$.
    \end{enumerate}
\end{prop}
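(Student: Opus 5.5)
For (1), I will prove the inequality ``$\geq$'' directly, since the right-hand supremum runs over a subset of all finite index subgroups. For ``$\leq$'', take an arbitrary finite index subgroup $\Gamma$ with $D(x,\Gamma)>0$; in particular $\Gamma$ is a period. I replace $\Gamma$ by its normal core $\Gamma^\prime=\bigcap_{g\in G} g\Gamma g^{-1}$, which is normal and of finite index. By the monotonicity remark before Lemma~\ref{lem: essential period with same periodic points}, $D(x,\Gamma^\prime)\geq D(x,\Gamma)$, and $\Gamma^\prime$ is again a period. Lemma~\ref{lem: essential period with same periodic points} then yields an essential period $\widetilde{\Gamma}\supseteq\Gamma^\prime$ with $D(x,\widetilde{\Gamma})=D(x,\Gamma^\prime)\geq D(x,\Gamma)$. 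Taking suprema gives (1).

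For (2) and (3), monotonicity applied to the decreasing sequence $\Gamma_{n+1}\subseteq\Gamma_n$ shows that $D(x,\Gamma_n)$ is nondecreasing. Hence the limit $L$ exists and satisfies $L\leq\mathcal{D}(x)$. The key step is the reverse inequality: given any finite index subgroup $\Gamma$, I will show $D(x,\Gamma)\leq L$. By the core argument above I may assume $\Gamma$ is normal.

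In case (3) I compare with $\Gamma\cap\Gamma_n$, which is a normal subgroup of finite index. I want to show that for large $n$ nothing new becomes periodic when passing from $\Gamma_n$ to $\Gamma\cap\Gamma_n$, at least up to small density. The cleanest route I see is the following.
\begin{itemize}
\item Fix a fundamental domain $D$ of $G/\Gamma$. Since $x$ is relative to $\mathbf{\Gamma}$ and $D$ is finite, there is $n$ with $D\subseteq\Per(x,\Gamma_n)$.
\item This only controls a tiny set, so instead I use densities via a common refinement. Each $g\in\Per(x,\Gamma)$ lies in $\Per(x,\Gamma_n)$ for large $n$, but not uniformly, so this is the main obstacle.
\item Better: show that $\Per(x,\Gamma\cap\Gamma_n)=\Per(x,\Gamma)\cup\Per(x,\Gamma_n)$ is false in general, and instead use the essential-period structure, as follows.
\end{itemize}

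For (2), I would use that essential periods correspond to factors of the odometer. If $\Gamma$ is an essential period, then I expect $\Gamma\supseteq\Gamma_n$ for some $n$ (compare \cite{CortezPetite2008Odometers}: the essential period structure determines the maximal equicontinuous factor, and $G/\Gamma$ is a finite factor of $\overleftarrow{G}$, hence factors through some $G/\Gamma_n$). Monotonicity then gives $D(x,\Gamma)\leq D(x,\Gamma_n)\leq L$, and (1) finishes (2).

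For (3), I apply Lemma~\ref{lem: essential period with same periodic points} to each normal $\Gamma_n$ to obtain essential periods $\widetilde{\Gamma}_n\supseteq\Gamma_n$ with the same periodic sets. Then $x$ is relative to $(\widetilde{\Gamma}_n)$, though this sequence need not be decreasing. I would instead pass to the normal cores of $\widetilde{\Gamma}_1\cap\dots\cap\widetilde{\Gamma}_n$. Alternatively, and more directly: for an essential period $\Gamma$, the $\Gamma$-skeleton of $x$ is determined by the $\Gamma_n$-skeleton for large $n$, because $x$ is almost automorphic over the odometer of $\mathbf{\Gamma}$-type.

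The step I expect to be hardest is justifying that every essential (normal) period contains some $\Gamma_n$. I would try this first via the factor argument. The map $\overleftarrow{G}\to G/\Gamma$ is continuous and $G$-equivariant, so it is constant on cylinders of some level $n$. This forces $\Gamma_n\subseteq\Gamma$.
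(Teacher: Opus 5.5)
\textbf{Parts (1) and (2).} Your argument for (1) is the paper's argument. For (2) you use the same reduction as the paper: by (1) it suffices that every essential period $\Gamma$ contains some $\Gamma_n$, and then monotonicity gives $D(x,\Gamma)\le D(x,\Gamma_n)\le L$. The paper gets this containment by citing \cite{CortezPetite2008Odometers}: there is a period structure starting at $\Gamma$, and it is cofinal with $\mathbf{\Gamma}$. Your factor-map argument is an acceptable substitute, with two provisos. You must cite \cite{CortezPetite2008Odometers} both for the existence of a factor $\ol{O_G(x)}\to G/\Gamma$ and for $\overleftarrow{G}$ being the MEF. You must also normalise both factor maps at $x$ (sending $x$ to $\tau(1_G)$ and to $\Gamma$). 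Otherwise you only get $\Gamma_n$ inside a conjugate of $\Gamma$, which matters because in (2) the $\Gamma_n$ need not be normal.

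\textbf{The gap is in (3).} You assert that the essential periods $\widetilde{\Gamma}_n\mt\Gamma_n$ from Lemma~\ref{lem: essential period with same periodic points} need not be decreasing. The opposite is true, and it is exactly the missing step. By \cite[Lem.\ 5]{CortezPetite2008Odometers}, if $\Gamma,\Gamma'$ are essential periods with $\Per(x,\Gamma,\alpha)\tm\Per(x,\Gamma',\alpha)$ for all $\alpha$, then $\Gamma'\tm\Gamma$. Here $\Per(x,\widetilde\Gamma_n,\alpha)=\Per(x,\Gamma_n,\alpha)\tm\Per(x,\Gamma_{n+1},\alpha)=\Per(x,\widetilde\Gamma_{n+1},\alpha)$, so $\widetilde\Gamma_{n+1}\tm\widetilde\Gamma_n$. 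Hence $(\widetilde\Gamma_n)_{n\in\N}$ is a period structure for $x$, and (2) together with $D(x,\widetilde\Gamma_n)=D(x,\Gamma_n)$ finishes the proof.

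Neither of your fallbacks works.
\begin{enumerate}
\item The normal cores $C_n$ of $\widetilde\Gamma_1\cap\dots\cap\widetilde\Gamma_n$ satisfy $\Gamma_n\tm C_n\tm\widetilde\Gamma_n$. So they have the same periodic parts as $\Gamma_n$, but nothing makes them essential. Part (2) still does not apply, and you are back at the hypotheses of (3).
\item Your alternative route claims that $x$ is almost automorphic over the odometer of $\mathbf{\Gamma}$. This is false under the hypotheses of (3). Take a regular Toeplitz sequence $x\in\{0,1\}^{\Z}$ with period structure $(2^n\Z)_{n\in\N}$. It is relative to $\Gamma_n=3\cdot 2^n\Z$. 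But the odometer of this sequence is $\Z_2\times\Z/3\Z$, which is not even a factor of $\ol{O_{\Z}(x)}$, since the MEF is the dyadic odometer and it has no $\Z/3\Z$ factor.
\end{enumerate}
So you cannot run your factor argument from (2) over the odometer of $\mathbf{\Gamma}$ without first producing a factor map from it onto the MEF. That again amounts to knowing that $(\widetilde\Gamma_n)_{n\in\N}$ is decreasing, which is the content of \cite[Lem.\ 5]{CortezPetite2008Odometers}.
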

\begin{proof}
    (1) Let $\mathcal{D}^\prime(x) = \sup\{D(x,\Gamma) : \Gamma \text{ is an essential period of } x\}$.
    Clearly, we have $\mathcal{D}^\prime(x)\leq D(x)$.
    Now let $\Gamma$ be a finite index subgroup of $G$.
    We can pick some $\Gamma^\prime \tm \Gamma$, which is a finite index normal subgroup of $G$.
    Lemma~\ref{lem: essential period with same periodic points} yields some essential period $\widetilde{\Gamma}$ such that $D(x,\Gamma)\leq D(x,\Gamma^\prime) = D(x,\widetilde{\Gamma}) \leq \mathcal{D}^\prime(x)$.\\
    (2) Note that for any essential period $\Gamma$ of $x$, there exists a period structure $(\Gamma_n^\prime)_{n\in\N}$ such that $\Gamma_1^\prime = \Gamma$ (c.f.\ \cite[Cor.\ 6]{CortezPetite2008Odometers}).
    It follows from \cite{CortezPetite2008Odometers} that for any $n\in \N$ there exists $k_n\in \N$ such that $\Gamma_{k_n}\tm \Gamma_n^\prime$.
    This shows $D(x,\Gamma) \leq \lim_{n\to\infty} D(x,\Gamma_n^\prime) \leq \lim_{n\to\infty} D(x,\Gamma_n)$ and thus $\mathcal{D}^\prime(x) \leq \lim_{n\to\infty} D(x,\Gamma_n)$.
    The converse inequality is obvious.\\
    (3) Lemma~\ref{lem: essential period with same periodic points} gives us a sequence $(\widetilde{\Gamma}_n)_{n\in\N}$ of essential periods such that $\Per(x,\widetilde{\Gamma}_n) = \Per(x,\Gamma_n)$ and $D(x,\widetilde{\Gamma}_n) = D(x,\Gamma_n)$.
    Since
    \[\Per(x,\widetilde{\Gamma}_n) = \Per(x,\Gamma_n) \tm \Per(x,\Gamma_{n+1}) = \Per(x,\widetilde{\Gamma}_{n+1})\]
    it follows from \cite[Lem.\ 5]{CortezPetite2008Odometers} that $\widetilde{\Gamma}_{n+1}\tm \widetilde{\Gamma}_n$.
    Hence, $(\widetilde{\Gamma}_n)_{n\in\N}$ is a period structure for $x$ and by (2) we obtain $\lim_{n\to\infty} D(x,\Gamma_n) = \lim_{n\to\infty} D(x,\widetilde{\Gamma}_n) = \mathcal{D}(x)$.
\end{proof}

\section{Symbolic model sets}\label{sec: model}
Let $(H,\tau)$ be a metric group compactification of $G$ with normalized Haar measure $\nu$.
We write $\mathfrak{B}(H)$ for the (Borel-) measurable subsets of $H$.
Moreover,
{\allowdisplaybreaks[4]
\begin{align*}
    \mathfrak{B}_{\mathrm{gen}}(H) &= \{W\in \mathfrak{B}(H) \,|\, \partial W \cap \tau(G) = \emptyset\}\\
    \mathfrak{B}_{\mathrm{reg}}(H) &= \{W\in \mathfrak{B}(H) \,|\, \nu(\partial W) = 0\}\\
    \mathfrak{B}_{\mathrm{closed}}(H) &= \{W\in \mathfrak{B}(H) \,|\, W \text{ is closed}\}\\
    \mathfrak{B}_{\mathrm{prop}}(H) &= \{W\in \mathfrak{B}(H) \,|\, W \text{ is compact and } W = \ol{\mathrm{int}(W)}\}\\
    \mathfrak{B}_{\mathrm{irred}}(H) &= \{W\in \mathfrak{B}(H) \,|\, \forall\xi\in H: W\xi = W \implies \xi = 1_H\}\\
    \mathfrak{B}_{\mathrm{aper}}(H) &= \{W\in \mathfrak{B}(H) \,|\, \forall\xi\in H: \nu(W\symdiff W\xi) = 0 \implies \xi = 1_H\}\\
    \mathcal{W}(H) &= \mathfrak{B}_{\mathrm{gen}}(H) \cap\mathfrak{B}_{\mathrm{prop}}(H)\cap\mathfrak{B}_{\mathrm{irred}}(H).\\
    \mathcal{W}_{\mathrm{reg}}(H) &= \mathcal{W}(H)\cap \mathfrak{B}_{\mathrm{reg}}(H).
\end{align*}}
In this setting we usually refer to (measurable) subsets $W$ of $H$ as \textbf{windows}.
We call windows $W\in \mathfrak{B}_{\mathrm{gen}}(H)$ (resp.\ $\mathfrak{B}_{\mathrm{reg}}(H)$, $\mathfrak{B}_{\mathrm{prop}}(H)$, $\mathfrak{B}_{\mathrm{irred}}(H)$, $\mathfrak{B}_{\mathrm{aper}}(H)$) \textbf{generic} (resp.\ \textbf{regular}, \textbf{proper}, \textbf{irredundant}, \textbf{Haar aperiodic}).\\
It is clear that any window $W\tm H$ defines an array $x_W = \1_W\circ \tau \in \{0,1\}^G$.
Let us denote this mapping by $\Phi: \mathfrak{B}(H)\to \{0,1\}^G,\: W \mapsto x_W$.
\begin{rem}\label{rem: CPS}
    This situation can be understood as a special form of a so-called \textbf{cut and project scheme (CPS)}.
    A CPS is a triple $(G,H,\mathcal{L})$ consisting of two locally compact groups and an \textbf{irrational lattice} $\mathcal{L}\tm G\times H$.
    See for example \cite{BaLeMo2007Model,BjHaPo2018AperiodicOrder,DrJaLe2025Model} for further reading on general cut and project schemes and \cite{CortezDrewloGomezJager2025Model} for an exposition of the special case from above.
    In our setting, the lattice $\mathcal{L}$ is the graph of the map $\tau$, i.e.\ $\mathcal{L}=\{(g,\tau(g)) : g\in G\}$.
    Furthermore, given a window $W$, one then calls the point set $\{g\in G: x_W(g) = 1\} = \{g\in G: \tau(g)\in W\}$ a \textbf{model set}.
    Hence, we shall refer to $x_W$ as a \textbf{symbolic model set}.\\
    In the literature, one also finds the terms \textbf{topologically regular} instead of proper, and \textbf{topologically aperiodic} instead of irredundant (see \cite{KeRi2019WeakModel}).
\end{rem}
For suitable windows $W$ one can determine the maximal equicontinuous factor of $(\ol{O_G(x_W)},G)$.
In our situation, it turns out that this factor is exactly $(H,\rho,G)$.
\begin{thm}[{\cite[Prop.\ 5.2]{CortezDrewloGomezJager2025Model}}]\label{thm: torus parametrisation}
    Let $W\tm H$ be proper and irredundant.
    Then there exists a unique factor map $\beta: \ol{O_G(x_W)} \to H$ with $\beta(x_W)=1_H$.
    This map satisfies for $y\in \ol{O_G(x_W)}$ and $\xi \in H$ that
    \begin{equation}\label{eq: torus parametrisation}
        \beta(y) = \xi \iff \tau(G)\cap \mathrm{int}(W)\xi^{-1}\tm \{\tau(g): y(g)=1\} \tm \tau(G)\cap W\xi^{-1}.
    \end{equation}
    If additionally $W$ is generic, then $\beta^{-1}\{1_H\} = \{x_W\}$, so that $x_W$ is almost automorphic over $H$.
\end{thm}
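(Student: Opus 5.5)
The plan is to construct $\beta$ as the closure of the natural map on the orbit of $x_W$ and to read off continuity, equivariance and the characterisation~\eqref{eq: torus parametrisation} from how such limits behave. First I record how the shift acts on symbolic model sets. For $g,h\in G$,
\[(gx_W)(h)=x_W(hg)=\1_W(\tau(h)\tau(g)),\]
so $gx_W = x_{W\tau(g)^{-1}}$. Consider the set
\[R=\ol{\{(gx_W,\tau(g)) : g\in G\}}\subseteq \ol{O_G(x_W)}\times H.\]
I will show that $R$ is the graph of a function $\beta$. Since $\ol{O_G(x_W)}$ is compact, every $y$ has at least one partner $\xi$ with $(y,\xi)\in R$, so the content is that this partner is unique. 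A closed graph into the compact Hausdorff space $H$ then gives continuity of $\beta$. Equivariance, $\beta(gy)=\tau(g)\beta(y)$, holds on the dense set of pairs in $R$ and passes to the closure because the action is continuous. Surjectivity follows since the image is compact and contains the dense set $\tau(G)$, and $\beta(x_W)=1_H$ comes from the pair $(x_W,\tau(1_G))$.

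\emph{Step 1: every $(y,\xi)\in R$ satisfies the sandwich condition.} Take $g_n$ with $g_nx_W\to y$ and $\tau(g_n)\to\xi$, and fix $h\in G$.
\begin{itemize}
\item If $\tau(h)\xi\in\mathrm{int}(W)$, then, since the interior is open, $\tau(h)\tau(g_n)\in W$ for all large $n$. Hence $y(h)=\lim_n x_W(hg_n)=1$.
\item Conversely, if $y(h)=1$, then $\tau(h)\tau(g_n)\in W$ for all large $n$. Since $W$ is closed, $\tau(h)\xi\in W$.
\end{itemize}
This is exactly the chain of inclusions in~\eqref{eq: torus parametrisation}.

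\emph{Step 2: the sandwich condition determines $\xi$ uniquely.} This is the main obstacle, and it is where properness and irredundance enter. Suppose $\xi$ and $\xi'$ both satisfy the sandwich for the same $y$. Then
\[\tau(G)\cap\mathrm{int}(W)\xi^{-1}\subseteq W\xi'^{-1}.\]
The set $\mathrm{int}(W)\xi^{-1}$ is open and $\tau(G)$ is dense, so $\tau(G)\cap\mathrm{int}(W)\xi^{-1}$ is dense in $\mathrm{int}(W)\xi^{-1}$. Taking closures and using $W=\ol{\mathrm{int}(W)}$ and that $W\xi'^{-1}$ is closed, I get $W\xi^{-1}\subseteq W\xi'^{-1}$, i.e.\ $W\subseteq W\zeta$ with $\zeta=\xi'^{-1}\xi$. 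By symmetry $W\subseteq W\zeta^{-1}$, and multiplying on the right by $\zeta$ gives $W\zeta\subseteq W$. Thus $W\zeta=W$, and irredundance forces $\zeta=1_H$, i.e.\ $\xi=\xi'$.

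Together, Steps 1 and 2 show that $R$ is a graph. They also give the "$\Leftarrow$" direction of~\eqref{eq: torus parametrisation}: if $\xi$ satisfies the sandwich for $y$, then so does $\beta(y)$ by Step 1, and Step 2 gives $\beta(y)=\xi$.

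\emph{Uniqueness of the factor map.} Any equivariant map $\beta'$ with $\beta'(x_W)=1_H$ satisfies $\beta'(gx_W)=\tau(g)=\beta(gx_W)$ for all $g\in G$. By continuity, $\beta'$ and $\beta$ agree on $\ol{O_G(x_W)}$.

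\emph{The generic case.} Suppose $W$ is generic and $\beta(y)=1_H$. Then
\[\tau(G)\cap\mathrm{int}(W)\subseteq\{\tau(g):y(g)=1\}\subseteq\tau(G)\cap W.\]
The difference between the outer sets is $\tau(G)\cap(W\setminus\mathrm{int}(W))\subseteq\tau(G)\cap\partial W$, which is empty by genericity. So the support of $y$ is exactly $\{g\in G:\tau(g)\in W\}$, which means $y=x_W$. Hence $\beta^{-1}\{1_H\}=\{x_W\}$, and $x_W$ is almost automorphic over $H$.
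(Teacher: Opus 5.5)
Your proof is correct. The paper does not prove this statement itself; it quotes it from \cite{CortezDrewloGomezJager2025Model}, so there is no in-paper argument to compare yours against.

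Your route is the standard construction of the torus parametrisation, and every step checks out:
\begin{enumerate}
\item you take $\beta$ to be the closure of the graph $\{(gx_W,\tau(g))\}$;
\item you read the sandwich inclusions off from limits, using openness of $\mathrm{int}(W)$ and closedness of $W$;
\item you show the sandwich determines $\xi$, via density of $\tau(G)$ in the open set $\mathrm{int}(W)\xi^{-1}$, $W=\ol{\mathrm{int}(W)}$, and irredundance.
\end{enumerate}
The side of the multiplications in Step 2 is handled correctly, so the conclusion $W\zeta=W$ matches the paper's right-multiplication definition of irredundance.

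Two small points are left implicit.
\begin{itemize}
\item In the generic case, passing from $\{\tau(g):y(g)=1\}=\tau(G)\cap W$ to $y=x_W$ uses injectivity of $\tau$.
\item In the paper's terminology, ``almost automorphic over $H$'' also asserts that $H$ is the maximal equicontinuous factor of $\ol{O_G(x_W)}$. This follows by a standard argument. Write $\beta=\theta\circ\pi_{\mathrm{eq}}$. Then $\pi_{\mathrm{eq}}^{-1}\{\pi_{\mathrm{eq}}(x_W)\}\tm\beta^{-1}\{1_H\}=\{x_W\}$, and $\theta^{-1}\{1_H\}=\{\pi_{\mathrm{eq}}(x_W)\}$ is a singleton. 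A factor map between minimal equicontinuous systems with one singleton fibre is a conjugacy. Its fibres are mapped onto one another by the transitive Ellis group, which here is a compact group of homeomorphisms. Hence $\theta$ identifies the maximal equicontinuous factor with $H$.
\end{itemize}
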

One calls the factor map $\beta$ the \textbf{torus parametrisation} for $W$.
We can also use the fact that the array $y$ only takes values 0 or 1 to rewrite \eqref{eq: torus parametrisation} to
\[ \beta(y) = \xi \iff \Phi(\mathrm{int}(W)\xi^{-1}) \leq y \leq \Phi(W\xi^{-1}) \iff x_{\mathrm{int}(W)\xi^{-1}}\leq y \leq x_{W\xi^{-1}}.\]
\begin{thm}[{\cite[Thm.\ 1.1]{CortezDrewloGomezJager2025Model}}]\label{thm: 1-1 windows almost automorphic}
    The map $\Phi$ induces a one-to-one correspondence between $\mathcal{W}(H)$ and the set of arrays $x\in \{0,1\}^G$ which are almost automorphic over $H$.
\end{thm}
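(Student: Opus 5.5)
The proof has three parts. First, $\Phi$ maps $\mathcal{W}(H)$ into the arrays that are almost automorphic over $H$. Second, $\Phi$ is injective on $\mathcal{W}(H)$. Third, every array almost automorphic over $H$ is $x_W$ for some $W\in\mathcal{W}(H)$.

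\emph{Well-definedness.} Let $W\in\mathcal{W}(H)$. Theorem~\ref{thm: torus parametrisation} gives a factor map $\beta:\ol{O_G(x_W)}\to H$ with $\beta^{-1}\{1_H\}=\{x_W\}$. It remains to see that $H$ is then the MEF. Let $\pi$ be the MEF factor map, so that $\beta=\theta\circ\pi$ for a factor map $\theta$ between minimal equicontinuous systems. We have $\theta^{-1}\{1_H\}=\pi(\beta^{-1}\{1_H\})$, which is a singleton. Fibres of a factor map between minimal equicontinuous systems are all translates of one another, essentially cosets of a closed subgroup. Hence $\theta$ is injective, i.e.\ a conjugacy, and $x_W$ is almost automorphic over $H$.

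\emph{Injectivity.} Let $W,W'\in\mathcal{W}(H)$ with $x_W=x_{W'}$, so that $W\cap\tau(G)=W'\cap\tau(G)$. Since $\mathrm{int}(W)$ is open and $\tau(G)$ is dense, $\ol{\mathrm{int}(W)\cap\tau(G)}=\ol{\mathrm{int}(W)}=W$, using properness. Therefore $W=\ol{W\cap\tau(G)}$, and likewise for $W'$. So the two windows coincide.

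\emph{Surjectivity.} This is the main step. Let $x$ be almost automorphic over $H$, and let $\pi:X=\ol{O_G(x)}\to H$ be the factor map, composed with a translation so that $\pi(x)=1_H$. Then $\pi(gx)=\tau(g)$, and equivariance gives $\pi^{-1}\{\tau(g)\}=\{gx\}$. Let $C=\{y\in X: y(1_G)=1\}$, which is clopen, and set $W=\pi(C)$, which is compact.

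First, $x_W=x$. Indeed, $\tau(g)\in W$ if and only if the unique preimage $gx$ lies in $C$, i.e.\ if and only if $x(g)=1$.

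Genericity is the point I expect to need care. Fibre maps are upper semicontinuous and the fibre over $\tau(g)$ is a singleton, so for $\xi$ close to $\tau(g)$ the whole fibre $\pi^{-1}\{\xi\}$ lies in a small neighbourhood of $gx$. Because $C$ is clopen, this neighbourhood is contained in $C$ or in $X\setminus C$. Hence $\tau(g)\in\mathrm{int}(W)$ or $\tau(g)\in\mathrm{int}(H\setminus W)$, so $\partial W\cap\tau(G)=\emptyset$.

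Properness follows from minimality. The orbit of $x$ is dense and $C$ is open, so $C=\ol{C\cap O_G(x)}$. Applying $\pi$ gives $W\subseteq\ol{W\cap\tau(G)}\subseteq\ol{\mathrm{int}(W)}$, where the last inclusion uses genericity.

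Irredundance comes from Theorem~\ref{thm: torus parametrisation}, which is now applicable because $W$ is proper and irredundancy is what we are checking. Suppose $W\xi=W$. Then $x_{\mathrm{int}(W)\xi}\le x_W\le x_{W\xi}=x_W$. By the characterisation \eqref{eq: torus parametrisation}, this means $\beta(x_W)=\xi^{-1}$. Since also $\beta(x_W)=1_H$, we get $\xi=1_H$. I would double-check that the torus-parametrisation characterisation only requires the defining inequalities, so that this argument is not circular. If it is, I would argue directly instead: $Wx$-translation invariance would make $\tau(g)\mapsto\tau(g)\xi$ preserve $x$, contradicting the singleton fibres of $\pi$.
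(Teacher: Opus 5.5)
Most of your proof is correct.

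\begin{itemize}
\item \emph{Well-definedness.} A factor map $\theta$ between minimal equicontinuous systems that has one singleton fibre is injective, so your argument via $\theta^{-1}\{1_H\}=\pi(\beta^{-1}\{1_H\})$ works.
\item \emph{Injectivity.} For proper $W$ you have $W=\ol{W\cap\tau(G)}$, which gives injectivity.
\item \emph{Surjectivity up to properness.} With $W=\pi(C)$ you correctly get $x_W=x$, genericity from upper semicontinuity of the fibres over $\tau(G)$, and properness from density of $O_G(x)$. One small point: normalise $\pi$ by the \emph{right} translation $\xi\mapsto\xi\pi(x)^{-1}$, since only this commutes with the left action $\rho$.
\end{itemize}

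The genuine gap is irredundancy. Your main argument is circular, for two reasons.
\begin{itemize}
\item Theorem~\ref{thm: torus parametrisation} has irredundancy of $W$ as a hypothesis, so you cannot use it to prove irredundancy.
\item The direction you use, that $x_{\mathrm{int}(W)\xi}\le x_W\le x_{W\xi}$ forces $\beta(x_W)=\xi^{-1}$, is exactly what fails for redundant windows. If $W\xi=W$ with $\xi\neq 1_H$, this sandwich condition holds for both $1_H$ and $\xi^{-1}$, so no map with property \eqref{eq: torus parametrisation} can exist.
\end{itemize}
Your fallback sentence is only a heuristic. In general $\tau(g)\xi\notin\tau(G)$, so you must produce an actual point of $X$ over $\xi$ and identify it.

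The fix is to work with the given factor map $\pi$ instead of $\beta$. Suppose $W\xi=W$.
\begin{itemize}
\item Pick $g_n\in G$ with $\tau(g_n)\to\xi$. After passing to a subsequence, $g_nx\to y\in X$, and then $\pi(y)=\lim_n\tau(g_n)\pi(x)=\xi$.
\item If $x(h)=1$, genericity gives $\tau(h)\in\mathrm{int}(W)$. Hence $\tau(h)\xi\in\mathrm{int}(W)\xi=\mathrm{int}(W)$, and $x(hg_n)=\1_W(\tau(h)\tau(g_n))=1$ for large $n$.
\item If $x(h)=0$, then $\tau(h)\xi\notin W\xi=W$. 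Since $H\setminus W$ is open, $x(hg_n)=0$ for large $n$.
\item Therefore $y(h)=\lim_n x(hg_n)=x(h)$ for every $h\in G$. So $y=x$, and $\xi=\pi(y)=\pi(x)=1_H$.
\end{itemize}
If you replace your last paragraph with this argument, the proof is complete.
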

Let $\mathbf{\Gamma}=(\Gamma_n)_{n\in\N}$ a sequence of finite index normal subgroups and $\overleftarrow{G}$ be the associated $G$-odometer, which is a metric group compactification of $G$ in this case.
Theorem~\ref{thm: 1-1 windows almost automorphic} and Proposition~\ref{prop: period structure almost automorphic} immediately give a one-to-one correspondence between $\mathcal{W}(\overleftarrow{G})$ and $\mathrm{Toep}(\mathbf{\Gamma})$.
Furthermore, we even have the following consequence of \cite[Thm.\ 5.5]{CortezDrewloGomezJager2025Model}.
\begin{thm}\label{thm: 1-1 regular windows regular Toeplitz}
    The map $\Phi$ induces a one-to-one correspondence between $\mathcal{W}_{\mathrm{reg}}(\overleftarrow{G})$ and the set of regular Toeplitz 0-1 arrays $x\in \mathrm{Toep}(\mathbf{\Gamma})$.
\end{thm}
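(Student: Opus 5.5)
Since $\overleftarrow{G}$ is a metric group compactification here, I would first combine Theorem~\ref{thm: 1-1 windows almost automorphic} with Proposition~\ref{prop: period structure almost automorphic}. Together they show that $\Phi$ restricts to a bijection $\mathcal{W}(\overleftarrow{G})\to\mathrm{Toep}(\mathbf{\Gamma})$. It then suffices to show, for $W\in\mathcal{W}(\overleftarrow{G})$, that $x_W$ is regular if and only if $\nu(\partial W)=0$. The route is to compute $D(x_W,\Gamma_n)$ in terms of the window.

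For $g\in G$ let $C_n(g)=\{\xi\in\overleftarrow{G}: \xi_n = g\Gamma_n\}$ be the level-$n$ cylinder containing $\tau(g)$. Since $\tau(g\Gamma_n)=\tau(G)\cap C_n(g)$ and $\tau(G)$ is dense, $\tau(g\Gamma_n)$ is dense in the clopen set $C_n(g)$. I claim:
\[ g\in\Per(x_W,\Gamma_n,1)\iff C_n(g)\tm W, \qquad g\in\Per(x_W,\Gamma_n,0)\iff C_n(g)\tm H\setminus W. \]
The reverse implications are trivial. For the forward direction in the case $\alpha=1$: we have $\tau(g\Gamma_n)\tm W$, and $W$ is closed, so by density $C_n(g)\tm W$. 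For $\alpha=0$: suppose the open set $C_n(g)$ met $W$. Since $W=\ol{\mathrm{int}(W)}$ (properness), it would meet $\mathrm{int}(W)$ in a nonempty open set. That set contains some $\tau(g\gamma)$ with $\gamma\in\Gamma_n$, so $x_W(g\gamma)=1$, a contradiction.

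Taking $D_n$ as fundamental domain, the cylinders $C_n(g)$, $g\in D_n$, partition $\overleftarrow{G}$ into $[G:\Gamma_n]$ sets of Haar measure $1/[G:\Gamma_n]$. Hence
\[ D(x_W,\Gamma_n)=\nu(U_n), \]
where $U_n$ is the union of the level-$n$ cylinders contained in $W$ or in $H\setminus W$. A cylinder contained in $W$ lies in $\mathrm{int}(W)$ because it is open, and similarly for the complement. The sets $U_n$ increase in $n$, and since cylinders form a basis of the topology,
\[ \bigcup_n U_n=\mathrm{int}(W)\cup\mathrm{int}(H\setminus W)=H\setminus\partial W. \]
Continuity of measure then gives $\lim_n D(x_W,\Gamma_n)=1-\nu(\partial W)$.

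Finally, $\mathbf{\Gamma}$ is a period structure for $x_W$, so Proposition~\ref{prop: regularity constant}(2) (or (3), using normality) yields $\mathcal{D}(x_W)=\lim_n D(x_W,\Gamma_n)=1-\nu(\partial W)$. Thus $x_W$ is regular exactly when $W\in\mathfrak{B}_{\mathrm{reg}}$, and the bijection restricts as claimed.

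The main obstacle is the characterization of $\Per(x_W,\Gamma_n,0)$, because the symbol $0$ corresponds to the non-closed set $H\setminus W$. Properness of $W$ is exactly what is needed there, and I would check that step carefully.
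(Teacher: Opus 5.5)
Your proof is correct and follows essentially the paper's route. You get the bijection $\mathcal{W}(\overleftarrow{G})\leftrightarrow\mathrm{Toep}(\mathbf{\Gamma})$ from Theorem~\ref{thm: 1-1 windows almost automorphic} and Proposition~\ref{prop: period structure almost automorphic}, and then the identity $\mathcal{D}(x_W)=\lim_n D(x_W,\Gamma_n)=1-\nu(\partial W)$ by writing $D(x_W,\Gamma_n)$ as the measure of the level-$n$ cylinders on which $W$ is constant, just as in the proof of Proposition~\ref{prop: relative Toeplitz generic windows}. (The paper itself only cites an external result for this statement, and your cylinder characterization of $\Per(x_W,\Gamma_n,\alpha)$, using closedness for $\alpha=1$ and properness for $\alpha=0$, fills in the step the paper calls ``not hard to see''.)
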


If one closely inspects the arguments leading to \cite[Thm.\ 1.1]{CortezDrewloGomezJager2025Model}, one finds that irredundancy is a standing assumption for the windows.
However, it is not hard to see one still obtains similar results even after dropping this assumption.
For convenience of the reader, we include the argument (which is also very similar to the proof of \cite[Thm.\ 1]{BaakeJagerLenz2016ToeplitzAndModel}).
\begin{prop}\label{prop: relative Toeplitz generic windows}
	The map $\Phi: \mathfrak{B}_{\mathrm{gen}}(\overleftarrow{G}) \to \mathrm{Rel}(\mathbf{\Gamma}), \: W\mapsto x_W$ is surjective and the restriction of $\Phi$ to $\mathfrak{B}_{\mathrm{gen}}(\overleftarrow{G})\cap \mathfrak{B}_{\mathrm{prop}}(\overleftarrow{G})$ is bijective.
	Moreover, for $W \in \mathfrak{B}_{\mathrm{gen}}(\overleftarrow{G})\cap \mathfrak{B}_{\mathrm{prop}}(\overleftarrow{G})$ we have
	\[ \mathcal{D}(x_W) = 1-\nu(\partial W),\]
	where $\nu$ is the Haar measure on $\overleftarrow{G}$.
	In particular, the map $\Phi: \mathfrak{B}_{\mathrm{gen}}(\overleftarrow{G})\cap \mathfrak{B}_{\mathrm{prop}}(\overleftarrow{G})\cap\mathfrak{B}_{\mathrm{reg}}(\overleftarrow{G}) \to \{x\in \mathrm{Rel}(\mathbf{\Gamma}) : x \text{ is regular}\}$ is bijective as well.
\end{prop}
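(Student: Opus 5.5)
The key tool is the dictionary between cylinder sets of $\overleftarrow{G}$ and cosets of $\Gamma_n$. For $g\in G$ write $C_n(g)=\{\xi\in\overleftarrow{G}: \xi_n = g\Gamma_n\}$ for the level-$n$ cylinder. Then
\[ \tau(g\Gamma_n)=\tau(G)\cap C_n(g), \]
and these cylinders form a clopen basis. The main observation I will prove is that, for any window $W$ and $g\in G$,
\[ g\in\Per(x_W,\Gamma_n,1)\iff \tau(G)\cap C_n(g)\tm W. \]
By density of $\tau(G)$ and openness of $C_n(g)$, this says $C_n(g)\tm\ol{W}$ when $W$ is closed, and it gives $C_n(g)\cap \mathrm{int}(W)\neq\emptyset$-type statements in general. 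The analogous statement holds for the letter $0$ with $W^c$.

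\emph{Generic windows give elements of $\mathrm{Rel}(\mathbf{\Gamma})$.} Let $W$ be generic and $g\in G$. Then $\tau(g)\notin\partial W$, so $\tau(g)\in\mathrm{int}(W)$ or $\tau(g)\in\mathrm{int}(W^c)$. In either case some cylinder $C_n(g)$ lies in that interior, so $g\in\Per(x_W,\Gamma_n)$. Since $n$ depends on $g$, this shows $x_W\in\mathrm{Rel}(\mathbf{\Gamma})$.

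\emph{Surjectivity and injectivity on proper generic windows.} Given $x\in\mathrm{Rel}(\mathbf{\Gamma})$, set
\[ U=\bigcup\{C_n(g): g\in\Per(x,\Gamma_n,1),\ n\in\N\},\qquad W=\ol{U}. \]
\begin{enumerate}
\item $W$ is proper: $U$ is open and $U\tm\mathrm{int}(W)$, so $W=\ol{U}\tm\ol{\mathrm{int}(W)}\tm W$.
\item $x_W=x$: if $x(g)=1$, then $\tau(g)\in U\tm W$. If $x(g)=0$, then $g\in\Per(x,\Gamma_m,0)$ for some $m$, and $C_m(g)$ is disjoint from every cylinder in $U$. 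Indeed, two cylinders are either nested or disjoint, and if they meet, a common point $\tau(h)$ of the smaller one gives contradicting periodic values. Hence $C_m(g)\cap U=\emptyset$, so $C_m(g)\cap W=\emptyset$ and $\tau(g)\in\mathrm{int}(W^c)$.
\item $W$ is generic: every $\tau(g)$ lies either in $U\tm\mathrm{int}(W)$ or in $\mathrm{int}(W^c)$, so $\tau(g)\notin\partial W$.
\end{enumerate}
This proves surjectivity, in fact already onto $\mathfrak{B}_{\mathrm{gen}}\cap\mathfrak{B}_{\mathrm{prop}}$.

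For injectivity, let $W$ be proper and generic. I will show $\mathrm{int}(W)=U_{x_W}$, where $U_{x_W}$ is the union built from $x_W$ as above; then $W=\ol{\mathrm{int}(W)}$ is determined by $x_W$. The inclusion $\supseteq$ holds because $\tau(G)\cap C_n(g)\tm W$ gives $C_n(g)\tm\ol{W}=W$, and $C_n(g)$ is open. The inclusion $\tm$: every open cylinder $C\tm\mathrm{int}(W)$ is $C_n(g)$ for some $g$, and then $g\in\Per(x_W,\Gamma_n,1)$. Since cylinders form a basis, $\mathrm{int}(W)$ is a union of such cylinders.

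\emph{The regularity formula.} By Proposition~\ref{prop: regularity constant}(3), $\mathcal{D}(x_W)=\lim_n D(x_W,\Gamma_n)$. Fix a fundamental domain $D_n$ of $G/\Gamma_n$, so that the cylinders $C_n(g)$, $g\in D_n$, partition $\overleftarrow{G}$ into sets of Haar measure $1/[G:\Gamma_n]$. By the characterization above, $g\in\Per(x_W,\Gamma_n)$ iff $C_n(g)\tm W$ or $C_n(g)\tm\ol{W^c}$. For $W$ proper, the second condition is equivalent to $C_n(g)\cap W=\emptyset$: the set $C_n(g)\cap W$ is closed, and if it were nonempty it would meet $\mathrm{int}(W)$ because $W=\ol{\mathrm{int}(W)}$ and $C_n(g)$ is open. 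Thus
\[ 1-D(x_W,\Gamma_n)=\nu(B_n), \]
where $B_n$ is the union of level-$n$ cylinders meeting both $W$ and $W^c$. These sets decrease in $n$, and $\bigcap_n B_n=\partial W$, using compactness and the fact that cylinders shrink to points. By continuity of measure, $\mathcal{D}(x_W)=1-\nu(\partial W)$.

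\emph{The regular case.} The final bijectivity statement follows by combining the bijection on proper generic windows with this formula: $\mathcal{D}(x_W)=1$ iff $\nu(\partial W)=0$.

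I expect the main obstacle to be the care needed with topological closures versus the merely set-theoretic information carried by $\tau(G)$ in the identification of periodic positions with cylinders. Properness of $W$ is exactly what makes that identification work.
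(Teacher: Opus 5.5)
Your proposal is correct and follows essentially the same route as the paper. It uses the same cylinder-based window $W=\ol{U}$ for surjectivity, the same neighbourhood argument showing that genericity gives $x_W\in\mathrm{Rel}(\mathbf{\Gamma})$, and the measure formula via Proposition~\ref{prop: regularity constant}(3). The differences are cosmetic. You compute $1-D(x_W,\Gamma_n)$ as the measure of the decreasing union of level-$n$ cylinders meeting both $W$ and its complement, for an arbitrary proper $W$, whereas the paper uses the increasing periodic unions $U_n\cup V_n$ for the constructed window together with $\partial W=\overleftarrow{G}\setminus(U\cup V)$. You also spell out injectivity via $\mathrm{int}(W)=U_{x_W}$, where the paper only says it follows from density of $\tau(G)$.
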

\begin{proof}
	Let $W \in \mathfrak{B}_{\mathrm{gen}}(\overleftarrow{G})$.
	Let $g\in G$ and assume w.l.o.g.\ that $x_W(g) = 1$, so that $\tau(g) \in W$.
	We aim to show that $g\in \Per(x_W, \Gamma_n, 1)$ for some $n\in\N$.
	Since $W$ is generic, we have $\tau(g)\notin \partial W$, so that $\tau(g)\in \mathrm{int}(W)$.
	Hence, there exists $\varepsilon>0$ such that $B_{\varepsilon}(\tau(g))\tm W$.
	However, $B_{\varepsilon}(\tau(g))$ contains $\tau(g\Gamma_n)$ with suitable $n=n(\varepsilon)\in \N$.
	Therefore, we have $\tau(g\Gamma_n) \tm W$, which yields $x_W(g\gamma_n) = 1$ for all $\gamma_n\in\Gamma_n$, as desired.
	This shows that $x_W\in \mathrm{Rel}(\mathbf{\Gamma})$.\\
	It follows easily using denseness of $\tau(G)$ that the restriction of $\Phi$ to $\mathfrak{B}_{\mathrm{gen}}(\overleftarrow{G})\cap \mathfrak{B}_{\mathrm{prop}}(\overleftarrow{G})$ is injective.
	For surjectivity, let $x\in\mathrm{Rel}(\mathbf{\Gamma})$.
	Define
	\[ U_n = \bigcup \{[\tau(g)]_n : g \in \Per(x,\Gamma_n, 1) \text{ and } V_n = \bigcup \{[\tau(g)]_n : g\in \Per(x,\Gamma_n,0)\}\]
	as well as $U = \bigcup_{n\in\N} U_n$, $V = \bigcup_{n\in\N} V_n$ and $W = \ol{U}$.
	It is straightforward to check that $W$ is generic, proper and $x_W = x$.
	Moreover, it as also not hard to see that $\partial W = \overleftarrow{G}\setminus (U\cup V)$.\\
	Now observe that with $D_n$ a fundamental domain of $G/\Gamma_n$ one has
	\[ \nu(U_n\cup V_n) = \frac{1}{\# D_n} \cdot \#\kl D_n \cap \kl\Per(x_W,\Gamma_n,1)\cup \Per(x_W,\Gamma_n,0)\kr\kr = D(x_W,\Gamma_n).\]
	Hence, by Proposition~\ref{prop: regularity constant} we obtain
	\[ \mathcal{D}(x_W) = \lim_{n\to\infty} D(x_W,\Gamma_n) = \lim_{n\to\infty} \nu(U_n\cup V_n) = \nu(U\cup V) = 1- \nu(\partial W). \qedhere\]

%    \begin{align*}
%        \Per(x_W,\Gamma_n,1) &= \{g\in G: [\tau(g)]_n \tm W\} \text{ and }\\
%        \Per(x_W,\Gamma_n,0) &= \{g\in G : [\tau(g)]_n \cap W = \emptyset\}.
%    \end{align*}
%    Together with the definition of the Haar measure $\nu$, we obtain
%    \begin{equation}\label{eq: 1-D_n as measure}
%        D(x_W,\Gamma_n) = 1- \nu \kl \bigcup \{C\tm \overleftarrow{G}: C \text{ cylinder of level } n \text{ and } C\cap  W \neq \emptyset \neq C \setminus W \} \kr.
%    \end{equation}
%    Since clearly $\bigcap_{n\in\N} \bigcup \{C\tm \overleftarrow{G}: C \text{ cylinder of level } n \text{ and } C\cap  W \neq \emptyset \neq C \setminus W \} = \partial W$, it follows that $\lim_{n\to\infty} D(x_W,\Gamma_n) = 1-\nu(\partial W)$ as desired.
\end{proof}

\bigskip
Now we shall demonstrate that after equipping the spaces $\mathfrak{B}(H)$ and $\{0,1\}^G$ with fairly natural pseudometrics the map $\Phi: \mathfrak{B}(H) \to \{0,1\}^G, \: W \mapsto x_W := \1_W \circ \tau$ becomes continuous.
We define pseudometrics $D$ and $\ol{D}$ on $\mathfrak{B}(H)$ as follows:
$$ D(A,B) =  \nu(A\symdiff B) \quad \text{and} \quad \ol{D}(A,B) = \nu(\ol{A \symdiff B}),$$
so that clearly $D \leq \ol{D}$.

\begin{prop}\label{prop: metric on proper}
	The pseudometrics $D$ and $\ol{D}$ are metrics when restricted to the proper windows in $H$.
\end{prop}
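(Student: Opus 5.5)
Since $D\le \ol{D}$ and both are pseudometrics (symmetry and the triangle inequality follow from $A\symdiff C\tm (A\symdiff B)\cup(B\symdiff C)$ and, for $\ol{D}$, from $\ol{X\cup Y}=\ol{X}\cup\ol{Y}$), it suffices to show definiteness of $D$ on proper windows. If $D(A,B)=0$ then $\ol{D}(A,B)\ge D(A,B)=0$ already forces nothing, but conversely $\ol{D}(A,B)=0$ implies $D(A,B)=0$. So the only thing to prove is: if $A,B\in\mathfrak{B}_{\mathrm{prop}}(H)$ with $\nu(A\symdiff B)=0$, then $A=B$.

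The plan is to use two facts. First, the Haar measure $\nu$ of a compact group charges every nonempty open set, i.e.\ $\nu(U)>0$ for all nonempty open $U\tm H$. This is standard: finitely many translates of $U$ cover $H$ by compactness, and $\nu$ is translation invariant with $\nu(H)=1$. Second, properness gives $A=\ol{\mathrm{int}(A)}$ and likewise for $B$.

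Now suppose $\nu(A\symdiff B)=0$. Consider the open set $U=\mathrm{int}(A)\setminus B$. Since $B$ is compact, hence closed, $U$ is open. It is contained in $A\setminus B\tm A\symdiff B$, so $\nu(U)=0$, and therefore $U=\emptyset$. Thus $\mathrm{int}(A)\tm B$. Taking closures and using that $B$ is closed gives $A=\ol{\mathrm{int}(A)}\tm B$. By symmetry $B\tm A$, so $A=B$. This shows that $D$, and a fortiori $\ol{D}$, is a metric on $\mathfrak{B}_{\mathrm{prop}}(H)$.

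There is no real obstacle here. The only point needing care is full support of Haar measure on open sets, which I will justify by the compactness and translation argument above. I also need to note explicitly that both closedness and regular-closedness of proper windows are used: the argument fails for general measurable windows, for instance when $A$ and $B$ differ by a null set.
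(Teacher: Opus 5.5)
Your proof is correct and is essentially the paper's argument: the paper takes distinct proper windows $W\neq W'$, notes that (say) $\mathrm{int}(W)\setminus W'$ is then nonempty and open, and concludes $0<\nu(\mathrm{int}(W)\setminus W')\leq D(W,W')$. This is exactly your contrapositive with $U=\mathrm{int}(A)\setminus B$. You additionally make explicit the full support of $\nu$, the use of closedness together with $W=\ol{\mathrm{int}(W)}$, and the reduction from $\ol{D}$ to $D$ via $D\le\ol{D}$; the paper leaves all three implicit.
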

\begin{proof}
	It clearly suffices to show that $D$ is a metric on the proper windows.
	Therefore, let $W \neq W^\prime$ be proper windows, so that $W = \ol{\mathrm{int}(W)}$ and $W^\prime = \ol{\mathrm{int}(W^\prime)}$.
	This implies that $\mathrm{int}(W)\not\tm W^\prime$ or $\mathrm{int}(W^\prime)\not\tm W$.
	Assume w.l.o.g.\ that the former case holds, hence $\mathrm{int}(W)\setminus W^\prime$ is non-empty and open.
	Since $\nu$ is a Haar measure, we obtain
	$$ 0 < \nu(\mathrm{int}(W)\setminus W^\prime) \leq \nu(W\symdiff W^\prime) = D(W,W^\prime). \qedhere$$
\end{proof}

\begin{cor}\label{cor: chara irred for proper}
	Let $W\tm H$ be proper. Then $\mathfrak{B}_{\mathrm{irred}}(H) = \mathfrak{B}_{\mathrm{aper}}(H)$.
\end{cor}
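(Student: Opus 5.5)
The statement is meant to say that for a proper window $W$ one has $W\in \mathfrak{B}_{\mathrm{irred}}(H)$ if and only if $W\in\mathfrak{B}_{\mathrm{aper}}(H)$. I prove both inclusions directly, with Proposition~\ref{prop: metric on proper} doing the essential work in the nontrivial direction. The key preliminary observation is that properness is preserved under right translation. Indeed, $\xi\mapsto \eta\xi$ and $\eta\mapsto\eta\xi$ are homeomorphisms of $H$, so $\mathrm{int}(W\xi)=\mathrm{int}(W)\xi$ and $\ol{W\xi}=\ol{W}\xi$. Hence if $W$ is compact with $W=\ol{\mathrm{int}(W)}$, then $W\xi$ is compact and $W\xi = \ol{\mathrm{int}(W)}\xi = \ol{\mathrm{int}(W\xi)}$.

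First I show that Haar aperiodic implies irredundant, which holds for every measurable $W$, proper or not. Suppose $W\xi=W$. Then $W\symdiff W\xi=\emptyset$, so $\nu(W\symdiff W\xi)=0$. Haar aperiodicity then forces $\xi=1_H$. Thus $\mathfrak{B}_{\mathrm{aper}}(H)\tm\mathfrak{B}_{\mathrm{irred}}(H)$.

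Next I show that a proper irredundant $W$ is Haar aperiodic. Take $\xi\in H$ with $\nu(W\symdiff W\xi)=0$, i.e.\ $D(W,W\xi)=0$. By the observation above, both $W$ and $W\xi$ are proper windows. Proposition~\ref{prop: metric on proper} says $D$ is a genuine metric on proper windows, so $D(W,W\xi)=0$ gives $W=W\xi$. Irredundancy now yields $\xi=1_H$, as required.

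No step is a serious obstacle. The only point needing care is the translation-invariance of properness, which is a routine homeomorphism argument. Measurability of $W\xi$ is likewise automatic, since right translation is a homeomorphism.
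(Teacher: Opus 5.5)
Your proof is correct and follows the route the paper intends: the paper gives no separate proof and treats the statement as an immediate corollary of Proposition~\ref{prop: metric on proper}, applied to the proper windows $W$ and $W\xi$, with the implication from Haar aperiodic to irredundant being trivial. Your check that right translation preserves properness, and your reading of the statement as restricted to proper windows, fill in exactly the details the paper leaves implicit.
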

The following result is Theorem~\ref{thm: properties Phi intro}
\begin{thm}\label{thm: properties Phi}
Let $\mathcal{F}=(F_n)_{n\in\N}$ be a F\o lner sequence in $G$ and equip $\{0,1\}^G$ with the Besicovitch pseudometric $D_{\mathcal{F}}$.
\begin{enumerate}
	\item The map $\Phi$ is continuous w.r.t.\ $\ol{D}$ and $D_{\mathcal{F}}$.
	\item The map $\Phi\vert_{\mathfrak{B}_{\mathrm{reg}}(H)}$ is an isometry between $D$ and $D_{\mathcal{F}}$.
\end{enumerate}
Moreover, (1) and (2) also hold with $D_{\mathcal{F}}$ replaced by the Weyl pseudometric $D_{\mathrm{Weyl}}$.
\end{thm}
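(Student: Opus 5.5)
The starting point is that $\Phi$ turns symmetric differences into disagreement sets: $x_A(g)\neq x_B(g)$ holds exactly when $\tau(g)\in A\symdiff B$. Hence
\[ D_{\mathcal{F}}(x_A,x_B) = \limsup_{n\to\infty}\frac{1}{\#F_n}\#\{g\in F_n: \tau(g)\in A\symdiff B\}, \]
and likewise for the Weyl pseudometric after taking the supremum over Følner sequences. So everything reduces to estimating asymptotic frequencies of visits of $\tau(g)$ to a Borel set $C\tm H$. The tool is the Uniform Ergodic Theorem applied to the minimal equicontinuous, hence uniquely ergodic, system $(H,\rho,G)$ with Haar measure $\nu$. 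Concretely, the key lemma I will prove is that for every Følner sequence $\mathcal{F}$ and every Borel $C\tm H$,
\[ \limsup_n \frac{1}{\#F_n}\#\{g\in F_n:\tau(g)\in C\}\le \nu(\ol C) \quad\text{and}\quad \liminf_n \frac{1}{\#F_n}\#\{g\in F_n:\tau(g)\in C\}\ge \nu(\mathrm{int}\, C). \]

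To prove the lemma, fix $\varepsilon>0$. By outer regularity of $\nu$, the closed set $\ol C$ has an open neighbourhood $U$ with $\nu(U)<\nu(\ol C)+\varepsilon$. Urysohn's lemma gives a continuous $\varphi$ with $\1_{\ol C}\le\varphi\le \1_U$. The Uniform Ergodic Theorem, applied at the point $1_H$, gives $\frac{1}{\#F_n}\sum_{g\in F_n}\varphi(\tau(g))\to\int\varphi\,d\nu\le\nu(\ol C)+\varepsilon$. Since $\1_C(\tau(g))\le\varphi(\tau(g))$, this yields the upper bound. The lower bound is symmetric: apply the same argument to the complement of $C$, or use inner regularity with a compact $K\tm\mathrm{int}\, C$ and a function $\varphi$ with $\1_K\le\varphi\le\1_{\mathrm{int}\, C}$.

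For (1), the lemma gives $D_{\mathcal{F}}(x_A,x_B)\le\nu(\ol{A\symdiff B})=\ol D(A,B)$. So $\Phi$ is $1$-Lipschitz, hence continuous. For (2), take $A,B$ regular. Since $\partial(A\symdiff B)\tm\partial A\cup\partial B$, the set $A\symdiff B$ has boundary of measure zero. Therefore $\nu(\mathrm{int}(A\symdiff B))=\nu(A\symdiff B)=\nu(\ol{A\symdiff B})$, and the upper and lower estimates squeeze the frequencies to $\nu(A\symdiff B)$. In fact the limit exists, and $D_{\mathcal{F}}(x_A,x_B)=D(A,B)$.

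For the Weyl version, the bounds in the lemma hold for every Følner sequence with the same constant. Taking the supremum over $\mathcal{F}$ gives $D_{\mathrm{Weyl}}(x_A,x_B)\le\ol D(A,B)$, and equality with $D(A,B)$ for regular windows. The only possible subtlety is whether the supremum over all Følner sequences could exceed the per-sequence bound. It cannot, because each bound is uniform in $\mathcal{F}$; uniformity in the base point is not even needed here.

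The main obstacle is the measure-theoretic approximation step: the Uniform Ergodic Theorem is stated only for continuous functions, while $\1_C$ is merely Borel. This is exactly why the closure, and the interior for the lower bound, must appear. It requires regularity of the Haar measure on the compact metrizable group $H$, which is standard.
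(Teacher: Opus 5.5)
Your proposal is correct and follows essentially the same route as the paper: Urysohn approximations of $\1_{\ol{A\symdiff B}}$ from above and of $\1_{\mathrm{int}(A\symdiff B)}$ from below, the Uniform Ergodic Theorem for $(H,\rho,G)$ at the point $1_H$, the inclusion $\partial(A\symdiff B)\tm\partial A\cup\partial B$ for regular windows, and the fact that the bounds are uniform in $\mathcal{F}$ for the Weyl case. Packaging this as a general frequency lemma gives the slightly sharper statement $D_{\mathcal{F}}(x_A,x_B)\le \ol{D}(A,B)$, i.e.\ $\Phi$ is $1$-Lipschitz; the paper's $\varepsilon$-argument proves the same bound but states it only as continuity.
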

\begin{proof}
	(1) Let $\varepsilon>0$ and let $W,W^\prime \in \mathfrak{B}(H)$ such that $\nu(\ol{W \symdiff W^\prime})< \varepsilon/2$.
	By Urysohn's Lemma, we can find $\varphi \in C(H)$ such that $\varphi\geq \1_{\ol{W\symdiff W^\prime}}$ and $\int \varphi \: d\nu - \nu(\ol{W\symdiff W^\prime}) < \varepsilon/2$.
	Recall that the unique invariant measure for the $G$-action $\rho: G\times H \to H, \: (g,\xi) \mapsto \rho_g(\xi):= \tau(g)\xi$ is the Haar measure $\nu$.
	Thus, by the uniform ergodic theorem we have
	\begin{align*}
		D_{\mathcal{F}}(x_W, x_{W^\prime}) &= \limsup_{n\to\infty} \frac{1}{\# F_n} \cdot\# \{ g\in F_n: x_W(g) \neq x_{W^\prime}(g)\}\\
		&= \limsup_{n\to\infty} \frac{1}{\# F_n}\cdot \# \{g\in F_n: \tau(g) \in W\symdiff W^\prime\} \\
		&\leq  \limsup_{n\to\infty} \frac{1}{\# F_n} \sum_{g\in F_n} \1_{\ol{W\symdiff W^\prime}} (\rho_g(1_H)) \\
		&\leq \limsup_{n\to\infty} \frac{1}{\# F_n} \sum_{g\in F_n} \varphi(\rho_g(1_H))
		= \int \varphi\: d\nu < \varepsilon
	\end{align*}
	(2)
	Now assume $W,W^\prime \in \mathfrak{B}_{\mathrm{reg}}(H)$.
	We can pick $\varphi_1,\varphi_2\in C(H)$ such that $\varphi_1 \leq \1_{\mathrm{int}(W\symdiff W^\prime)} \leq \1_{\ol{W\symdiff W^\prime}} \leq \varphi_2$ as well as
	$$\int \varphi_2\: d\nu - \nu(\ol{W\symdiff W^\prime}) <\varepsilon \text{ and }\nu(\mathrm{int}(W\symdiff W^\prime)) - \int \varphi_1 \: d\nu < \varepsilon.$$
	Analogously to above, the uniform ergodic theorem implies
	$$ \int \varphi_1 \: d\nu \leq  D_{\mathcal{F}}(x_W,x_{W^\prime}) \leq \int \varphi_2 \: d\nu$$
	Now observe that $\partial ( W\symdiff W^\prime) \tm \partial W\cup \partial W^\prime$, so that $D(W,W^\prime) = \nu(W\symdiff W^\prime) = \nu(\mathrm{int}(W\symdiff W^\prime)) = \nu(\ol{W\symdiff W^\prime})$.
	This implies $\abs{D_{\mathcal{F}}(x_W,x_{W^\prime})-D(W,W^\prime)} \leq \varepsilon$.
	Since $\varepsilon>0$ was arbitrary, we conclude that $\Phi\vert_{\mathfrak{B}_{\mathrm{reg}}(H)}$ is an isometry, as desired.\\
	Lastly, observe that all previous estimates and observations can be made uniformly in $\mathcal{F}$, so that (1) and (2) indeed also hold for $D_{\mathrm{Weyl}}$.
\end{proof}

\section{Amorphic Complexity}\label{sec: ac}
We shall first provide an alternative formula for the amorphic complexity of regular symbolic model sets.
The following observations can be found in a slightly different context in \cite[Section~2.3]{KasjanKeller2025Besicovitch}.
Let $\mathcal{F}$ be a F\o lner sequence in $G$ and $W\in \mathcal{W}_{\mathrm{reg}}(H)$.
Let $\beta_W: \ol{O_G(x_W)} \to H$ be the torus parametrisation (c.f.\ Theorem~\ref{thm: torus parametrisation}).
Moreover, let $\pi_W$ be the restriction of the canonical projection $\pi_{\mathcal{F}}:\{0,1\}^G\to [\{0,1\}^G]_{\mathcal{F}}$ to $\ol{O_G(x_W)}$.
We equip $H$ with the metric $d_W$ defined for $\xi,\eta\in H$ via
\[ d_W(\xi,\eta) = D(W\xi^{-1},W\eta^{-1}) = \nu(W\xi^{-1} \symdiff W\eta^{-1}).\]
This is indeed a metric due to properness and irredundancy of $W$, together with Proposition~\ref{prop: metric on proper} and Corollary~\ref{cor: chara irred for proper}.
Moreover, $d_W$ is clearly left-invariant w.r.t.\ the group action of $H$.
\begin{thm}
	There exists a (unique) bijection $\psi_W: H \to [\ol{O_G(x_W)}]_{\mathcal{F}}$ such that $\pi_W(\beta_W^{-1}\{\xi\}) = \{\psi_W(\xi)\}$ for all $\xi \in H$.
	Moreover, $\psi_W$ is an isometry between $d_W$ and $D_{\mathcal{F}}$.
\end{thm}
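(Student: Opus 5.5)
The plan is to show two things. First, each fibre $\beta_W^{-1}\{\xi\}$ collapses to a single Besicovitch class. Second, the common class depends on $\xi$ isometrically, so the class of $\xi$ sits at $D_{\mathcal{F}}$-distance $d_W(\xi,\eta)$ from the class of $\eta$. Injectivity then comes for free from $d_W$ being a metric, and surjectivity from $\beta_W$ being surjective. In both steps the key tool is Theorem~\ref{thm: properties Phi}(2), applied to translates of $W$.

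\emph{Step 1 (fibres collapse).} Fix $\xi\in H$ and $y\in\beta_W^{-1}\{\xi\}$; the fibre is nonempty since $\beta_W$ is a factor map. By \eqref{eq: torus parametrisation},
\[ x_{\mathrm{int}(W)\xi^{-1}}\leq y\leq x_{W\xi^{-1}}. \]
Hence $y$ differs from $x_{W\xi^{-1}}$ only at those $g$ with $\tau(g)\in (W\setminus\mathrm{int}(W))\xi^{-1}=\partial W\xi^{-1}$, using that $W$ is closed. So
\[ D_{\mathcal{F}}(y,x_{W\xi^{-1}})\leq D_{\mathcal{F}}(x_{\mathrm{int}(W)\xi^{-1}},x_{W\xi^{-1}}). \]
Both $W\xi^{-1}$ and $\mathrm{int}(W)\xi^{-1}$ are regular, because their boundaries lie in $\partial W\xi^{-1}$, which has Haar measure $\nu(\partial W)=0$ by right invariance of $\nu$ on the compact group $H$. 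Theorem~\ref{thm: properties Phi}(2) therefore bounds the right-hand side by $\nu(\partial W\xi^{-1})=0$. Thus every $y$ in the fibre lies in the class $[x_{W\xi^{-1}}]_{\mathcal{F}}$, and we set $\psi_W(\xi):=[x_{W\xi^{-1}}]_{\mathcal{F}}$.

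One subtlety is that $x_{W\xi^{-1}}$ itself need not lie in $\ol{O_G(x_W)}$. This does not matter: $\pi_W(\beta_W^{-1}\{\xi\})$ is a nonempty set of classes each equal to $[x_{W\xi^{-1}}]_{\mathcal{F}}$, so it is the singleton $\{\psi_W(\xi)\}$, and $\psi_W(\xi)\in[\ol{O_G(x_W)}]_{\mathcal{F}}$.

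\emph{Step 2 (isometry).} Applying Theorem~\ref{thm: properties Phi}(2) to the regular windows $W\xi^{-1}$ and $W\eta^{-1}$ gives
\[ D_{\mathcal{F}}(\psi_W(\xi),\psi_W(\eta))=D_{\mathcal{F}}(x_{W\xi^{-1}},x_{W\eta^{-1}})=\nu(W\xi^{-1}\symdiff W\eta^{-1})=d_W(\xi,\eta). \]
Since $d_W$ is a metric, $\psi_W$ is injective. Every $y\in\ol{O_G(x_W)}$ lies in the fibre over $\beta_W(y)$, so $\pi_W(y)=\psi_W(\beta_W(y))$ and $\psi_W$ is surjective onto $[\ol{O_G(x_W)}]_{\mathcal{F}}$. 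Uniqueness is forced by the fibre condition, again because the fibres are nonempty.

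The main point needing care is Step 1: correctly extracting from \eqref{eq: torus parametrisation} that fibre elements differ from $x_{W\xi^{-1}}$ only on $\tau^{-1}(\partial W\xi^{-1})$, and checking that right translates of a regular window remain regular. Once that is in place, the rest is a direct application of Theorem~\ref{thm: properties Phi}.
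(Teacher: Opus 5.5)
Your proof is correct and follows the paper's argument. You collapse each fibre $\beta_W^{-1}\{\xi\}$ to the class of $x_{W\xi^{-1}}$ via \eqref{eq: torus parametrisation}, since discrepancies lie in $\tau^{-1}(\partial W\xi^{-1})$, and then transfer the isometry from Theorem~\ref{thm: properties Phi}(2). The only cosmetic difference is that you obtain the fibre collapse by applying Theorem~\ref{thm: properties Phi}(2) to the regular windows $\mathrm{int}(W)\xi^{-1}$ and $W\xi^{-1}$, whereas the paper re-runs the Urysohn/uniform ergodic theorem argument directly.
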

\begin{proof}
	First, we claim the following:
	Let $\xi\in H$ and  $x,y\in \{0,1\}^G$ (not necessarily $x,y\in \ol{O_G(x_W)}$) such that $x_{\mathrm{int}(W)\xi^{-1}} \leq x,y \leq x_{W\xi^{-1}}$.
	Then, $D_{\mathcal{F}}(x,y) = 0$.
	To see that this is true, note that $x(g) \neq y(g)$ implies $\rho_g(\xi) = \tau(g)\cdot \xi \in \partial W$.
	Now the claim follows similarly to the proof of Theorem~\ref{thm: properties Phi} using the uniform ergodic theorem and Urysohn's Lemma.\\
	The claim readily implies that for any $\xi\in H$, the set $\pi_W(\beta_W^{-1}\{\xi\})$ is a singleton, so that the map $\psi_W$ is well-defined.
	Moreover, $\psi_W$ is clearly surjective so that it remains to show that $\psi_W$ is an isometry.
	Recall that due to Theorem~\ref{thm: properties Phi}(2), the map $H \to \{0,1\}^G,\: \xi\mapsto x_{W\xi^{-1}}$ is an isometry between $d_W$ and $D_{\mathcal{F}}$, whilst $\pi_{\mathcal{F}}$ is an isometry by definition.
	Hence, it suffices to show that $\pi_{\mathcal{F}}(x_{W\xi^{-1}}) = \psi_W(\xi)$.
	(Note that it is in general \textbf{not true} that $x_{W\xi^{-1}} \in \beta_W^{-1}\{\xi\}$.)
	However, above claim yields $D_{\mathcal{F}}(y,x_{W\xi^{-1}}) = 0$ for any $y\in \beta_W^{-1}\{\xi\}$, i.e.\ $\pi_W(y) = \pi_{\mathcal{F}}(x_{W\xi^{-1}})$, finishing the proof.
\end{proof}

\begin{cor}\label{cor: ac is box dim of d_W}
    We have
	\[ \ol{\mathrm{ac}}_{\mathcal{F}}(\ol{O_G(x_W)},G) = \ol{\mathrm{Dim}}_{\mathrm{Box}}(H,d_W) = \limsup_{\varepsilon\to 0} \frac{\log \nu\kl B_{\varepsilon}^{d_W}(1_H)\kr}{\log \varepsilon}.\]
	In particular, the upper amorphic complexity of $(\ol{O_G(x)},G)$ is independent of the F\o lner sequence $\mathcal{F}$.
	An analogous result holds for the lower amorphic complexity, as well.
\end{cor}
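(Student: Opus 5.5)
The first equality comes from the preceding theorem. The map $\psi_W$ is an isometric bijection between $(H,d_W)$ and $([\ol{O_G(x_W)}]_{\mathcal{F}},D_{\mathcal{F}})$. Since $\mathrm{Sep}$ is invariant under isometric bijections, we get $\mathrm{Sep}([\ol{O_G(x_W)}]_{\mathcal{F}},D_{\mathcal{F}},\varepsilon)=\mathrm{Sep}(H,d_W,\varepsilon)$ for every $\varepsilon>0$. Taking $\limsup$ (resp.\ $\liminf$) of $\log\mathrm{Sep}/(-\log\varepsilon)$ then gives the first equality, and its lower counterpart. The right-hand side $(H,d_W)$ does not involve $\mathcal{F}$ at all, which already gives the independence statement.

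For the second equality I will compare separated sets with ball measures, using left-invariance of $d_W$ and of $\nu$. By left-invariance, all balls $B^{d_W}_\varepsilon(\xi)=\xi B^{d_W}_\varepsilon(1_H)$ have the same Haar measure $m(\varepsilon):=\nu(B^{d_W}_\varepsilon(1_H))$. To make sense of $\nu$ on these balls I need them to be Borel. For this I check that $d_W$ is continuous with respect to the original topology of $H$: the map $\eta\mapsto\nu(W\symsdiff W\eta)$ is continuous because right translation is continuous in $L^1(\nu)$. Consequently $d_W$-balls are open in $H$, and they have positive measure.

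Now the two inequalities. If $A$ is a maximal $\varepsilon$-separated set, the balls $B_\varepsilon(a)$, $a\in A$, cover $H$, so $\#A\cdot m(\varepsilon)\ge 1$. If $A$ is $\varepsilon$-separated, the balls $B_{\varepsilon/2}(a)$ are pairwise disjoint, so $\#A\cdot m(\varepsilon/2)\le 1$. Together,
\[ m(\varepsilon)^{-1}\le \mathrm{Sep}(H,d_W,\varepsilon)\le m(\varepsilon/2)^{-1}. \]
In particular $\mathrm{Sep}$ is finite, and compactness is not needed here.

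Taking logarithms and dividing by $-\log\varepsilon$, the factor $1/2$ is harmless because $\log(\varepsilon/2)/\log\varepsilon\to1$. This yields
\[ \limsup_{\varepsilon\to0}\frac{\log\mathrm{Sep}}{-\log\varepsilon}=\limsup_{\varepsilon\to0}\frac{\log m(\varepsilon)}{\log\varepsilon}, \]
and likewise for the $\liminf$. The only point I expect to need care is the measurability and positivity of the $d_W$-balls, which is handled by the continuity of $d_W$ established above; the rest is routine.
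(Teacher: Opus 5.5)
Your proposal is correct and follows the paper's route. The first equality and the $\mathcal{F}$-independence come from the isometric bijection $\psi_W$ of the preceding theorem. The second equality is the consequence of left-invariance of $d_W$ and $\nu$ that the paper only cites as well known, and your bound $\nu(B^{d_W}_{\varepsilon}(1_H))^{-1}\le \mathrm{Sep}(H,d_W,\varepsilon)\le \nu(B^{d_W}_{\varepsilon/2}(1_H))^{-1}$ makes that step explicit. Your check that $d_W$ is continuous for the original topology (so $d_W$-balls are Borel and have positive measure) is a detail the paper leaves implicit, and you handle it correctly.
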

The latter equality is a well-known consequence of the (left-)invariance of the metric $d_W$ and the measure $\nu$.
Due to this result, we will write $\ol{\mathrm{ac}}(\ol{O_G(x_W)},G)$ instead of $\ol{\mathrm{ac}}_{\mathcal{F}}(\ol{O_G(x_W)},G)$ for $W\in \mathcal{W}_{\mathrm{reg}}(H)$.

\subsection{Upper estimates}
The first part of this subsection will be dedicated to proving Theorem~\ref{thm: estimate ac Toeplitz intro}.
Furthermore, using a Toeplitz example we show that the estimate given by \cite[Thm.\ 1.3]{FuGrJaKw2023Amorphic} does not hold in the full generality claimed by the authors.\\
Let $\mathbf{\Gamma}=(\Gamma_n)_{n\in\N}$ be a decreasing sequence of normal subgroups of $G$ with finite index such that $\bigcap_{n\in\N} \Gamma_n = \{1_G\}$ and let $(D_n)_{n\in\N}$ be a sequence of fundamental domains of $G/\Gamma_n$ given by Lemma~\ref{lem: good fundamental domains}.
Let $x\in \mathrm{Rel}(\mathbf{\Gamma})$ be a regular Toeplitz array, so that Proposition~\ref{prop: regularity constant} yields $\lim_{n\to\infty} D(x,\Gamma_n) = 1$.
Let $\overleftarrow{G}$ denote the $G$-odometer associated to $\mathbf{\Gamma}$ and let $\nu$ be the (normalized) Haar measure on $\overleftarrow{G}$.
The following is a natural generalisation of \cite[Thm.\ 1.6]{FuGrJa2016Amorphic}.

\begin{thm}\label{thm: estimate ac Toeplitz}
    We have
    $$ \ol{\mathrm{ac}}(\ol{O_G(x)},G) \leq \limsup_{n\to\infty} \frac{\log \# D_{n+1}}{-\log(1-D(x,\Gamma_n))}.$$
\end{thm}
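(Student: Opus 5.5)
Write $p_n = 1-D(x,\Gamma_n)$ for the density of non-periodic positions at level $n$; since $x$ is regular, $p_n \to 0$. The plan is to bound $\mathrm{Sep}([\ol{O_G(x)}]_{\mathcal{F}}, D_{\mathcal{F}}, \varepsilon)$ directly, without passing to windows. This avoids any need for irredundancy, which Corollary~\ref{cor: ac is box dim of d_W} would require.

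\emph{Step 1: a covering at level $n$.} Every $y \in \ol{O_G(x)}$ is a limit of shifts $g_k x$. Passing to a subsequence, the cosets $g_k\Gamma_n$ (or rather the residues of $g_k$ in $D_n$) stabilise at some $d\in D_n$. Then $y$ agrees with $dx$ on the set $d^{-1}\Per(x,\Gamma_n)$, because the $\Gamma_n$-periodic part of $x$ is carried along rigidly by shifts differing by elements of $\Gamma_n$; normality of $\Gamma_n$ is used here. So $y$ and $dx$ can differ only on the shifted non-periodic set $d^{-1}(G\setminus \Per(x,\Gamma_n))$. That set is a union of $\Gamma_n$-cosets of density $p_n$, and $\Gamma_n$-invariant sets have asymptotic density equal to their proportion in $D_n$ along any F\o lner sequence. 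Hence
\[ D_{\mathcal{F}}(y, dx) \le p_n . \]
Consequently the $\#D_n$ points $\{\pi_{\mathcal{F}}(dx) : d\in D_n\}$ form a $p_n$-net of $[\ol{O_G(x)}]_{\mathcal{F}}$. An $\varepsilon$-separated set with $\varepsilon > 2p_n$ has at most one point in each ball of radius $p_n$, so
\[ \mathrm{Sep}([\ol{O_G(x)}]_{\mathcal{F}}, D_{\mathcal{F}}, \varepsilon) \le \#D_n \quad \text{whenever } \varepsilon > 2p_n . \]

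\emph{Step 2: choice of $n$ and the limit.} Given small $\varepsilon$, choose $n$ minimal with $2p_{n+1} < \varepsilon$. Minimality gives $2p_n \ge \varepsilon$, and Step 1 applied at level $n+1$ gives
\[ \frac{\log \mathrm{Sep}(\varepsilon)}{-\log \varepsilon} \le \frac{\log \# D_{n+1}}{-\log (2 p_n)} . \]
As $\varepsilon \to 0$ we have $n \to \infty$ and $p_n \to 0$, so the factor $2$ is absorbed: $-\log(2p_n) \sim -\log p_n$. Taking $\limsup$ yields the bound $\limsup_n \frac{\log\#D_{n+1}}{-\log p_n}$, which is the claim. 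This reproduces exactly the index shift $n+1$ versus $n$ in the statement. The sequence $(p_n)$ is non-increasing by the monotonicity $D(x,\Gamma')\ge D(x,\Gamma)$, so the minimal $n$ is well defined. If $p_n = 0$ eventually, the space is finite and the bound is trivial.

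\emph{Main obstacle.} The delicate point is Step 1. First, one must make rigorous that every element of the orbit closure agrees with some $dx$, $d\in D_n$, on the full periodic part. This needs the coset bookkeeping with $(gx)(h)=x(hg)$ and normality of $\Gamma_n$, for instance via $\Per(gx,\Gamma_n,\alpha)=\Per(x,\Gamma_n,\alpha)g^{-1}$ together with a convergent-subsequence argument. Second, the density bound must hold uniformly for the given F\o lner sequence. For that I would use that $\Gamma_n$-periodic sets correspond to clopen unions of level-$n$ cylinders in $\overleftarrow{G}$. The uniform ergodic theorem for the odometer (Theorem~\ref{eq: uniform ergodic theorem}) then gives density exactly $p_n$ for every F\o lner sequence.
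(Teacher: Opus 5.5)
Your proof is correct, and it takes a genuinely different route from the paper's.

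The paper works on the internal side. It writes $x=x_W$ for the proper, generic, regular window $W\subseteq\overleftarrow{G}$ from Proposition~\ref{prop: relative Toeplitz generic windows}. It then uses Corollary~\ref{cor: ac is box dim of d_W} to express $\ol{\mathrm{ac}}$ as $\limsup_{\varepsilon\to 0}\log\nu(B^{d_W}_\varepsilon(1))/\log\varepsilon$. The only estimate needed is that right translation by any element of the level-$n$ cylinder of the identity fixes $U_n$ and $V_n$. Hence this cylinder, of Haar measure $1/\#D_n$, lies in every $d_W$-ball around $1$ of radius larger than $1-D(x,\Gamma_n)$. A piecewise-constant comparison function then gives the same index shift ($\#D_{n+1}$ against $p_n$) that your minimal-$n$ choice produces.

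You instead stay in the Besicovitch quotient and show directly that the $\#D_n$ shifts $dx$ form a $p_n$-net. This is the symbolic counterpart of the same cylinder estimate, and essentially the argument of \cite{FuGrJa2016Amorphic} for $\Z$.

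What your version buys:
\begin{enumerate}
\item It is elementary.
\item It gives the bound for $\ol{\mathrm{ac}}_{\mathcal{F}}$ for every F\o lner sequence separately.
\item As you point out, it never needs irredundancy. The paper's route relies on Corollary~\ref{cor: ac is box dim of d_W} and the torus parametrisation behind it, which are stated for $W\in\mathcal{W}_{\mathrm{reg}}(\overleftarrow{G})$. For $x$ merely relative to $\mathbf{\Gamma}$, the window need not be irredundant, and the paper does not address this.
\end{enumerate}
The paper's route, in turn, is very short once the model-set machinery is in place, and it showcases that machinery.

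There is one small slip in your Step~1. The set on which $y$ and $dx$ agree is $\Per(x,\Gamma_n)d^{-1}$, which is what your own formula $\Per(gx,\Gamma_n,\alpha)=\Per(x,\Gamma_n,\alpha)g^{-1}$ gives, not $d^{-1}\Per(x,\Gamma_n)$. These sets differ when $G/\Gamma_n$ is non-abelian. However, both complements are unions of $p_n\cdot\#D_n$ cosets of $\Gamma_n$, so the density bound is unaffected. That bound also needs no ergodic theorem: the F\o lner property alone forces the image of $F_k$ in the finite group $G/\Gamma_n$ to become equidistributed.
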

\begin{proof}
    By Proposition~\ref{prop: relative Toeplitz generic windows} we can find a unique proper, generic, regular window $W\tm \overleftarrow{G}$ such that $x = x_W$.
    Let us denote $\varepsilon_n = 1-D(x,\Gamma_n)$.
    We make the following observation: Let $\xi = (d_n\Gamma_n)_{n\in\N} \in \overleftarrow{G}$ and assume that $d_n \in \Gamma_n$.
    Recall the sets $U_n = \bigcup \{[\tau(g)]_n : g \in \Per(x,\Gamma_n, 1)$ and $V_n = \bigcup \{[\tau(g)]_n : g\in \Per(x,\Gamma_n,0)\}$ from the proof of Proposition~\ref{prop: relative Toeplitz generic windows}.
    Then, since $d_n\in \Gamma_n$ we clearly have $U_n \xi^{-1} = U_n$ and $V_n\xi^{-1} = V_n$.
    This yields
    \[ d_W(1_{\overleftarrow{G}},\xi) = \nu(W\symdiff W\xi^{-1}) \leq \nu\kl\overleftarrow{G}\setminus(U_n\cup V_n)\kr = 1 - D(x,\Gamma_n) = \varepsilon_n.\]
%    \begin{align*}
%        d_W(1_{\overleftarrow{G}},\xi) &= \nu(W\symdiff W\xi^{-1}) \\
%        &\leq \nu \kl \bigcup \{C\tm \overleftarrow{G}: C \text{ cylinder of level } n \text{ and } C\cap W \neq \emptyset \neq C\setminus W \} \kr \\
%        \overset{\eqref{eq: 1-D_n as measure}}&{=} \varepsilon_{n}.
%    \end{align*}
    This shows that $\nu\kl B_{\varepsilon}^{d_W}\kl 1_{\overleftarrow{G}}\kr\kr \geq \nu\kl\il 1_{\overleftarrow{G}}\ir_n\kr\geq \frac{1}{\# D_n}$ for $\varepsilon>\varepsilon_{n}$.
    If we define $\psi(\varepsilon) = \frac{1}{\# D_n}$ where $\varepsilon_{n}<\varepsilon\leq \varepsilon_{n-1}$, then we have just shown that
    $$ \frac{\log \nu(B_{\varepsilon}^{d_W}(1_{\overleftarrow{G}}))}{\log \varepsilon} \leq \frac{\log \psi(\varepsilon)}{\log \varepsilon}.$$
    Now, observe that the mapping $\varepsilon \mapsto \frac{\log \psi(\varepsilon)}{\log \varepsilon}$ restricted to the interval $(\varepsilon_{n+1}, \varepsilon_{n}]$ has a maximum at $\varepsilon_{n}$.
    Hence,
    \[
        \limsup_{\varepsilon\to 0}\frac{\log \psi(\varepsilon)}{\log \varepsilon}
        = \limsup_{n\to\infty} \frac{\log \psi(\varepsilon_{n})}{\log \varepsilon_{n}}
        = \limsup_{n\to\infty}\frac{\log \# D_{n+1}}{-\log(1-D(x,\Gamma_n))}
    \]
    Together with Corollary~\ref{cor: ac is box dim of d_W}, this yields that
    \begin{align*}
        \ol{\mathrm{ac}}(\ol{O_G(x)},G) &= \ol{\mathrm{Dim}}_{\mathrm{Box}}(\overleftarrow{G},d_W) = \limsup_{\varepsilon\to 0} \frac{\log \nu(B_{\varepsilon}^{d_W}(1_{\overleftarrow{G}}))}{\log \varepsilon}
        \leq \limsup_{\varepsilon\to 0}\frac{\log \psi(\varepsilon)}{\log \varepsilon}\\
        &= \limsup_{n\to\infty} \frac {\log \# D_{n+1}}{-\log(1-D(x,\Gamma_n))}
    \end{align*}
    finishing the proof.
\end{proof}
Since $\# D_{n+1} = [G:\Gamma_{n+1}]$, this entails Theorem~\ref{thm: estimate ac Toeplitz intro}.
Also note that unlike the proof of \cite[Thm.\ 1.6]{FuGrJa2016Amorphic}, above argument uses the fact that we can represent a Toeplitz array as a model set (recall Remark~\ref{rem: CPS}).\\
In fact, one may deem a proof for Theorem~\ref{thm: estimate ac Toeplitz} obsolete, as it is a consequence of \cite[Thm.\ 1.3]{FuGrJaKw2023Amorphic}, which is a stronger bound for a more general class of model sets.

\begin{namedtheorem}[Claim]{\normalfont (\cite[Thm.\ 1.3]{FuGrJaKw2023Amorphic})}
    Let $G,H$ be locally compact, abelian, second countable groups, let $(G,H,\mathcal{L})$ be a cut and project scheme and let $W\tm H$ be a regular, irredundant window.
    If $\ol{\mathrm{Dim}}_{\mathrm{Box}}(H)$ is finite, then for any F\o lner sequence $\mathcal{F}$ in $G$ one has
    $$ \ol{\mathrm{ac}}_{\mathcal{F}}(\Omega(\Lambda(W)),G) \leq \frac{\ol{\mathrm{Dim}}_{\mathrm{Box}}(H)}{\ol{\mathrm{Dim}}_{\mathrm{Box}}(H)-\ol{\mathrm{Dim}}_{\mathrm{Box}}(\partial W)}$$
    where the (upper) box dimensions are computed w.r.t.\ an arbitrary invariant metric on $H$.
\end{namedtheorem}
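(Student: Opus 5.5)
The statement as quoted cannot be proved in full generality. I would instead \emph{refute} it by constructing a counterexample in which $H$ is a $\Z$-odometer. The target is a proper, generic, regular, irredundant window $W\tm H$ with $\partial W=\{\zeta\}$ a single point, while $\ol{\mathrm{ac}}(\ol{O_\Z(x_W)},\Z)=\infty$. Take $G=\Z$ and $H=\overleftarrow{\Z}$ the dyadic odometer with the invariant metric $d(\xi,\eta)=2^{-n}$, where $n$ is the first level at which $\xi,\eta$ differ. Then $\ol{\mathrm{Dim}}_{\mathrm{Box}}(H)=1$ and $\ol{\mathrm{Dim}}_{\mathrm{Box}}(\partial W)=0$, so the claimed bound equals $1$. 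By Corollary~\ref{cor: ac is box dim of d_W}, it then suffices to show that $\nu(B^{d_W}_\varepsilon(1_H))$ decays faster than any power of $\varepsilon$ along a sequence $\varepsilon\to0$.

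\textbf{Construction.} Fix $\zeta\in H\setminus\tau(\Z)$, which exists because $H$ is uncountable. Let $C_k$ be the level-$n_k$ cylinder around $\zeta$, with $n_k$ strictly increasing, and put $A_k=C_k\setminus C_{k+1}$. Inside each $A_k$ choose a clopen set $E_k\tm A_k$ that is a union of cylinders of a much finer level $L_k$, with $n_k\ll L_k<n_{k+1}$. Choose $E_k$ \enquote{pseudo-random}: for every $\xi\in[1_H]_m\setminus[1_H]_{m+1}$ with $n_k<m<L_k$ we want
\[
\nu\bigl((E_k\symdiff E_k\xi^{-1})\cap A_k\bigr)\geq c\,\nu(A_k)
\]
for a fixed constant $c>0$. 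A Toeplitz-type block pattern should achieve this, for instance using independent choices on consecutive levels. Set $W=\ol{\bigcup_k E_k}\cup\{\zeta\}$. Each $E_k$ is clopen and the $E_k$ accumulate only at $\zeta$, so $\partial W\tm\{\zeta\}$. Since $\zeta\notin\tau(\Z)$, $W$ is generic, and it is regular because $\nu(\{\zeta\})=0$. Properness requires $\zeta\in\ol{\mathrm{int}W}$, which holds as long as every $E_k$ is nonempty. Irredundancy follows from the non-periodic choice of the $E_k$ together with Corollary~\ref{cor: chara irred for proper}.

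\textbf{Estimate.} Let $\xi$ be a point at level exactly $m$, with $n_k<m<L_k$. Translation by $\xi^{-1}$ preserves all cylinders of level $\le m$, and therefore preserves each $A_j$ with $j<k$ as a set. Our choice of $E_k$ then forces
\[
d_W(1_H,\xi)\geq c\,\nu(A_k)\geq c'\,2^{-n_k}=:\varepsilon_k.
\]
Consequently, any $\xi$ with $d_W(1_H,\xi)<\varepsilon_k$ must lie in $[1_H]_{L_k}$, which gives $\nu(B^{d_W}_{\varepsilon_k}(1_H))\leq 2^{-L_k}$. Choosing for example $L_k\geq k\,n_k$ yields
\[
\frac{\log\nu(B^{d_W}_{\varepsilon_k}(1_H))}{\log\varepsilon_k}\gtrsim\frac{L_k}{n_k}\to\infty,
\]
so $\ol{\mathrm{ac}}=\infty$, which contradicts the claimed bound of $1$.

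\textbf{Main obstacle.} The hard step is the uniform lower bound $\nu(E_k\symdiff E_k\xi^{-1})\geq c\,\nu(A_k)$ for \emph{all} $m$ in the range $(n_k,L_k)$, since a naive choice of $E_k$ is nearly invariant under fine translations. I would first try building $E_k$ level by level: at each level between $n_k$ and $L_k$, split every current cylinder into its two children and assign them complementary patterns. A translation that acts nontrivially at level $m$ then swaps these children and flips a fixed fraction of the mass. In addition, I need to check that translation by $\xi$ also moves $C_{k+1}$ inside $A_k$ without cancelling this contribution.
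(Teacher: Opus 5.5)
Your overall route is the paper's. The paper also refutes the claim rather than proving it, by an odometer window whose boundary is a single non-lattice point (Proposition~\ref{prop: counterexample}). It likewise uses Corollary~\ref{cor: ac is box dim of d_W} to reduce everything to showing that small $d_W$-balls around $1_H$ lie in much deeper cylinders.

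The gap is in the one estimate your argument rests on. You need $\nu\bigl((E_k\symdiff E_k\xi^{-1})\cap A_k\bigr)\geq c\,\nu(A_k)$ for every $\xi\in[1_H]_{n_k}\setminus[1_H]_{L_k}$, that is, for every nonzero rotation of $\Z/2^{L_k-n_k}$. This is left unproved, and the mechanism you suggest does not give it. Such a $\xi$ does not just swap the two children of each level-$m$ cylinder. On the digits $n_k+1,\dots,L_k$ of a point of $A_k$ it acts as addition of an arbitrary nonzero element of $\Z/2^{L_k-n_k}$, with carries. Take your ``complementary children'' set, i.e.\ membership by parity of the digit sum (Thue--Morse). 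Adding $2^j$ to a point whose bits $j,j+1$ are $1,0$ changes two digits and leaves membership unchanged, so no flip is automatic.

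The bound is true, but it needs a real lemma. For the Thue--Morse set $S\tm\Z/2^N$, the cyclic autocorrelations satisfy $c_{N+1}(2a)=c_N(a)$ and $c_{N+1}(2a+1)=-\tfrac12\bigl(c_N(a)+c_N(a+1)\bigr)$. From this one gets $c_N(a)+c_N(a+1)\geq -1$, and then $\max_{a\neq0}c_N(a)\leq\tfrac12$. That means $\#\bigl(S\symdiff(S+a)\bigr)\geq 2^N/4$ for all $a\neq0$. Alternatively, a uniformly random $S$ works by concentration and a union bound over the $2^N$ shifts.

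A smaller omission: your ``consequently'' also needs the cases $\xi\notin[1_H]_{n_1}$ and $\xi\in[1_H]_{L_j}\setminus[1_H]_{n_{j+1}}$ for $j<k$, which no $E_j$ controls. In the second case, $\xi$ fixes the level-$L_j$ cylinder $Q_j\ni\zeta$ but moves $C_{j+1}$ into $Q_j\setminus C_{j+1}$. That set misses $W$, since $E_j$ is a union of level-$L_j$ cylinders inside $A_j$. So $d_W(1_H,\xi)\geq\nu(W\cap C_{j+1})\geq\nu(E_{j+1})\geq\tfrac c2\,\nu(A_{j+1})$, and $\varepsilon_k$ must be chosen below this. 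In the first case, $C_1$ is moved off itself and $W\tm C_1$.

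The paper avoids the correlation problem entirely through its choice of odometer. With $q_{n+1}=2^{q_n-1}$, one digit carries $q_n-1$ bits. Each bit $l$ gets its own cylinder $[q_1-1,\dots,q_{n-1}-1,l]$, split in half according to bit $l$ of the next digit. Adding any nonzero $\xi_{n+1}$ flips its lowest set bit with no incoming carry. So one whole such cylinder is swapped, and $d_W\geq\nu([0_H]_n)$ is immediate. Once the lemma is supplied, your version has the modest advantage of working in the dyadic odometer with its standard metric, where $\ol{\mathrm{Dim}}_{\mathrm{Box}}(H)=1$ is explicit.
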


\begin{namedtheorem}[Consequence]
Whenever $(H,\tau)$ is a metric group compactification of $G$ and $W\in\mathcal{W}_{\mathrm{reg}}(H)$ such that $\ol{\mathrm{Dim}}_{\mathrm{Box}}(\partial W) = 0$ and $\ol{\mathrm{Dim}}_{\mathrm{Box}}(H)<\infty$ w.r.t.\ some invariant metric on $H$, then $\ol{\mathrm{ac}}(\ol{O_G(x_W)},G)\leq 1$.
\end{namedtheorem}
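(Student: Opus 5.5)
The Consequence is obtained by specialising the Claim (\cite[Thm.\ 1.3]{FuGrJaKw2023Amorphic}) to our setting; there is essentially nothing to prove beyond checking that its hypotheses hold and that the two notions of amorphic complexity agree.

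The specialisation goes as follows.
\begin{itemize}
\item We take the cut and project scheme $(G,H,\mathcal{L})$ with $\mathcal{L}$ the graph of $\tau$, as in Remark~\ref{rem: CPS}.
\item The Claim is stated for locally compact, abelian, second countable groups. This fits whenever $G$ and $H$ are abelian: a countable discrete group is second countable, and a compact metrizable group is second countable. Since the Consequence is presented as a formal corollary, I would read it in that setting, i.e.\ $G$ abelian and hence $H$ abelian.
\item For $W\in\mathcal{W}_{\mathrm{reg}}(H)$, the window is regular ($\nu(\partial W)=0$) and irredundant, exactly as the Claim requires.
\item The hull $\Omega(\Lambda(W))$ of the model set $\Lambda(W)=\{g:\tau(g)\in W\}$ is identified with the subshift $\ol{O_G(x_W)}$ via the correspondence between point sets in $G$ and $0$-$1$ arrays.
\item Plugging $\ol{\mathrm{Dim}}_{\mathrm{Box}}(\partial W)=0$ and $0<\ol{\mathrm{Dim}}_{\mathrm{Box}}(H)<\infty$ into the Claim gives the bound $\ol{\mathrm{Dim}}_{\mathrm{Box}}(H)/\ol{\mathrm{Dim}}_{\mathrm{Box}}(H)=1$.
\end{itemize}

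By Corollary~\ref{cor: ac is box dim of d_W}, the left-hand side $\ol{\mathrm{ac}}_{\mathcal{F}}$ does not depend on $\mathcal{F}$, so the bound reads $\ol{\mathrm{ac}}(\ol{O_G(x_W)},G)\le 1$.

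Two small points need care.
\begin{itemize}
\item \textbf{The case $\ol{\mathrm{Dim}}_{\mathrm{Box}}(H)=0$.} Here the quotient in the Claim is $0/0$ and must be handled separately or excluded. The intended application is an infinite odometer, where $\ol{\mathrm{Dim}}_{\mathrm{Box}}(H)$ is positive or infinite depending on the metric. If $\ol{\mathrm{Dim}}_{\mathrm{Box}}(H)=0$, I would instead bound $\ol{\mathrm{ac}}$ directly: by Corollary~\ref{cor: ac is box dim of d_W} it is the box dimension of $(H,d_W)$, and this can be compared with the box dimension of $H$. I do not expect to need this case.
\item \textbf{Compatibility of definitions.} The Claim's amorphic complexity is the standard one, while this paper uses the symbolic (Besicovitch quotient) definition. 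These agree by \cite[Prop.\ 3.19]{FuGrJa2016Amorphic}, as noted in the remark in Section~\ref{sec: entropy and ac}.
\end{itemize}

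The main obstacle is therefore purely one of bookkeeping: matching the hypotheses (abelianness, choice of invariant metric on $H$) and the notion of hull between \cite{FuGrJaKw2023Amorphic} and the present setting. No new estimate is required.
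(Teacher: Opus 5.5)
There is a genuine gap, and it cannot be closed: the Consequence is false, and the paper states it only in order to refute it. Your argument has one substantive step, the appeal to \cite[Thm.\ 1.3]{FuGrJaKw2023Amorphic}, and that is exactly the step that fails. Matching hypotheses is therefore not mere bookkeeping, because the Claim does not hold when the internal group is an odometer; the paper notes that it survives for $H=\R^d$.

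Proposition~\ref{prop: counterexample} is an explicit counterexample with $G=\Z$, so restricting to abelian groups does not help. There $H=\prod_{n\in\N}\{0,\ldots,q_n-1\}$ with $q_1=3$ and $q_{n+1}=2^{q_n-1}$. The window $W=\ol{U}$, translated by some $\zeta\notin\tau(\Z)$ to make it generic, lies in $\mathcal{W}_{\mathrm{reg}}(H)$ and has a one-point boundary. Hence $\ol{\mathrm{Dim}}_{\mathrm{Box}}(\partial W)=0$ for every metric. The remaining hypothesis also holds: the invariant ultrametric $d(\xi,\eta)=\nu([\xi]_n)$, with $n$ the first index where $\xi_n\neq\eta_n$, gives $\ol{\mathrm{Dim}}_{\mathrm{Box}}(H)=1$. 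Nevertheless $B^{d_W}_{\varepsilon_n}(0_H)\tm[0_H]_{n+1}$ for $\varepsilon_n=\nu([0_H]_n)/2$. Corollary~\ref{cor: ac is box dim of d_W} then yields $\ol{\mathrm{ac}}(\ol{O_\Z(x_W)},\Z)\geq\limsup_{n\to\infty}\log_2 q_{n+1}/(\sum_{j\leq n}\log_2 q_j+1)=\infty$.

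The estimate that ties $d_W$ to $\partial W$ in $\R^d$ is the inclusion of $W\symdiff(W-\xi)$ in the $d_H(\xi,0_H)$-neighbourhood of $\partial W$: the segment from $\eta$ to $\eta+\xi$ must cross $\partial W$. In a totally disconnected group this inclusion fails.

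Take the untranslated $W$, and let $\xi_1=\cdots=\xi_n=0$ and $\xi_{n+1}=2^b$ with $b\leq q_n-2$. Adding $\xi$ flips the $b$-th binary digit of the $(n+1)$-st coordinate. It therefore exchanges $W$ and its complement on the entire cylinder $[q_1-1,\ldots,q_{n-1}-1,b]$. This cylinder has measure $\nu([0_H]_n)$ and lies at $d$-distance $\nu([0_H]_n)$ from the boundary point, while $d(\xi,0_H)=\nu([0_H]_n)/q_{n+1}$. Consistently, the odometer bound the paper does prove, Theorem~\ref{thm: estimate ac Toeplitz}, is infinite for this example.

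So no verification of hypotheses can rescue your argument; the statement itself has to be abandoned.
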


We shall now construct a counterexample to this consequence.\\[3pt]
Let $q_1 =3$ and define $q_{n+1} = 2^{q_n-1}$ inductively.
For the groups $G$ and $H$ we pick $G = \Z$ and $H = \prod_{n\in\N} \{0,\ldots,q_n-1\}$ the classical odometer (adding machine) associated to the sequence $(q_n)_{n\in\N}$ with carry-over addition.
The group homomorphism $\tau: \Z\to H$ is characterised by $\tau(1) = (1,0,0,\ldots)$.
\begin{prop}\label{prop: counterexample}
    There exists a window $W\in\mathcal{W}_{\mathrm{reg}}(H)$, such that $\ol{\mathrm{ac}}(\ol{O_\Z}(x_W),\Z) = \infty$ and $\ol{\mathrm{Dim}}_{\mathrm{Box}}(\partial W) = 0$ for any metric on $H$.
\end{prop}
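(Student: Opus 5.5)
The plan is to build $W$ so that its boundary is a single non-lattice point and its intersections with a nested sequence of shells around that point make translates far apart. Pick $\omega=(1,1,1,\ldots)\in H$. Since $\tau(\Z)$ consists exactly of the sequences that are eventually $0$ (nonnegative integers) or eventually $q_n-1$ (negative integers), we have $\omega\notin\tau(\Z)$.

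Write $[\omega]_n$ for the level-$n$ cylinder of $\omega$, i.e.\ all $\xi$ with $\xi_1=\ldots=\xi_n=1$, and set $[\omega]_0=H$. Decompose $H\setminus\{\omega\}$ into the shells
\[ S_n=[\omega]_{n-1}\setminus[\omega]_n \qquad (n\in\N).\]
Each $S_n$ is the disjoint union of the level-$n$ cylinders $C_{n,a}$ with first $n-1$ digits equal to $1$ and $n$-th digit $a\neq 1$, and $\nu(S_n)=(q_n-1)/(q_1\cdots q_n)$. For each $n$ I choose a set $A_n\subseteq\{0,\ldots,q_n-1\}\setminus\{1\}$ and define
\[ W=\{\omega\}\cup\bigcup_{n\in\N}\bigcup_{a\in A_n} C_{n,a}.\]

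First I verify the structural properties. Every $\xi\neq\omega$ lies in some clopen $C_{n,a}$, so $\partial W\subseteq\{\omega\}$. Choosing every $A_n$ nonempty and with nonempty complement forces $\omega\in\partial W$, hence $\partial W=\{\omega\}$.
\begin{itemize}
\item Since $\partial W$ is a singleton, $\ol{\mathrm{Dim}}_{\mathrm{Box}}(\partial W)=0$ for any metric on $H$.
\item $W$ is regular because $\nu(\{\omega\})=0$.
\item $W$ is generic because $\omega\notin\tau(\Z)$.
\item $W$ is compact, being closed. It is proper because $W\setminus\{\omega\}$ is open and $\omega$ lies in its closure.
\item $W$ is irredundant: if $W\xi=W$, then $\partial W\,\xi=\partial W$, so $\omega\xi=\omega$ and hence $\xi=1_H$.
\end{itemize}
Thus $W\in\mathcal{W}_{\mathrm{reg}}(H)$, and by Corollary~\ref{cor: ac is box dim of d_W} it remains to show $\limsup_{\varepsilon\to0}\log\nu(B^{d_W}_\varepsilon(1_H))/\log\varepsilon=\infty$.

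The key estimate is a lower bound on $d_W(1_H,\xi)$ in terms of the first nonzero digit of $\xi$. Let $k$ be minimal with $\xi_k\neq0$. Translating by $\xi$ leaves the first $k-1$ digits unchanged and adds $\xi_k$ to the $k$-th digit, with carries only into digits $>k$. Hence translation maps $C_{k,a}$ onto $C_{k,a+\xi_k}$ up to the level-$k$ cylinder labelled by digit $1$. Consequently
\[ d_W(1_H,\xi)\geq \frac{\#\kl A_k\symdiff (A_k+\xi_k)\kr-2}{q_1\cdots q_k}.\]
So I require the combinatorial property: for every $t\not\equiv 0 \pmod{q_k}$ one has $\#(A_k\symdiff(A_k+t))\geq c\,q_k$ for a fixed $c>0$. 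A uniformly random subset of size about $q_k/2$ satisfies this with high probability (Chernoff plus a union bound over the $q_k-1$ shifts). Alternatively, one can use an explicit choice adapted to the form $q_{k+1}=2^{q_k-1}$.

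Granting this, fix $n$ and set $\varepsilon_n=c'\,\nu(S_n)\approx c'/(q_1\cdots q_{n-1})$. Every $\xi$ whose first nonzero digit has index $k\leq n$ satisfies $d_W(1_H,\xi)\geq c'/(q_1\cdots q_{k-1})\geq\varepsilon_n$. Therefore $B^{d_W}_{\varepsilon_n}(1_H)\subseteq[1_H]_n$, and so
\[ \frac{\log\nu\kl B^{d_W}_{\varepsilon_n}(1_H)\kr}{\log\varepsilon_n}\geq\frac{\sum_{i\leq n}\log q_i}{\sum_{i<n}\log q_i+O(1)}\geq\frac{(q_{n-1}-1)\log 2}{\sum_{i<n}\log q_i+O(1)}.\]
Because $\log q_i$ grows like an iterated exponential, the denominator is $O(\log q_{n-1})$ while the numerator is of order $q_{n-1}$, so the ratio tends to infinity and $\ol{\mathrm{ac}}=\infty$.

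The main obstacle is the uniform separation property of the sets $A_k$ for all nonzero shifts, combined with carefully tracking the carry and the excluded digit $1$ in the comparison of $W$ with $W\xi^{-1}$. I would first try the random construction and check that the constant $c$ is independent of $k$.
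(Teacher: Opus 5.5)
Your argument is correct in outline, and its overall structure matches the paper's: a window whose boundary is a single point, nested shells around that point, a lower bound for $d_W(1_H,\xi)$ in terms of the first nonzero digit of $\xi$, and the growth $\log q_{n+1}=(q_n-1)\log 2\gg\sum_{j\le n}\log q_j$. The key separation step, however, is genuinely different.

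The paper takes shells around $\tau(-1)=(q_n-1)_{n}$. It splits each shell cylinder $[q_1-1,\ldots,q_{n-1}-1,l]$ in half according to bit $l$ of the $(n+1)$-st digit. Since $q_{n+1}=2^{q_n-1}$, every nonzero $\xi_{n+1}$ has lowest set bit $l\le q_n-2$, and adding $\xi_{n+1}$ flips that bit with no incoming carry. Hence the whole cylinder $[q_1-1,\ldots,q_{n-1}-1,l]$ lies in $W\symdiff W\xi^{-1}$, which gives $B^{d_W}_{\varepsilon_n}(0_H)\subseteq[0_H]_{n+1}$ with $\varepsilon_n=\nu([0_H]_n)/2$. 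This is explicit and depends on the exact form of $q_n$.

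You instead decide each shell cylinder at its own level, which requires sets $A_k\subseteq\Z/q_k\Z$ whose nontrivial translates are uniformly far apart, obtained probabilistically. This works for any sufficiently fast-growing $(q_n)$, but it is non-constructive. Your choice $\omega=(1,1,\ldots)\notin\tau(\Z)$ gives genericity directly, whereas the paper has to translate its window by some $\zeta\notin\tau(\Z)$ afterwards. Your irredundancy argument via $\partial(W\xi)=(\partial W)\xi$ is also more direct than the paper's deduction from the ball inclusion.

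One step needs patching. The inclusion $B^{d_W}_{\varepsilon_n}(1_H)\subseteq[1_H]_n$ needs a lower bound for \emph{all} $\xi$ whose first nonzero digit has index $k\le n$. Your bound $(\#(A_k\symdiff(A_k+\xi_k))-2)/(q_1\cdots q_k)$ is vacuous for small $k$:
\begin{itemize}
\item For $q_1=3$ (where $A_1\subseteq\{0,2\}$) and $q_2=4$, every admissible $A_k$ has some shift $t\neq 0$ with $\#(A_k\symdiff(A_k+t))\le 2$, so your estimate gives $0$.
\item The union bound over the $q_k-1$ shifts only succeeds once $q_k$ is large.
\end{itemize}
This is easy to repair. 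Since $W$ is proper and irredundant, $d_W$ is a metric by Proposition~\ref{prop: metric on proper} and Corollary~\ref{cor: chara irred for proper}. Moreover, $\xi\mapsto\nu(W\symdiff W\xi^{-1})$ is continuous by $L^1$-continuity of translation. So for fixed $K$, the minimum $\delta_K$ of $d_W(1_H,\cdot)$ over the compact set $H\setminus[1_H]_K$ is positive. Use your combinatorial bound only for $k>K$, and take $n$ large enough that $\varepsilon_n<\delta_K$.

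Also, the events $x\in A_k\symdiff(A_k+t)$ are not independent, so a plain Chernoff bound does not apply verbatim. McDiarmid's bounded-difference inequality (each coin changes the count by at most $2$) gives the needed $e^{-\Omega(q_k)}$ tail.
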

\begin{proof}
    For $(\xi_1,\ldots,\xi_k) \in \prod_{n=1}^k\{0,\ldots,q_n-1\}$ we let $[\xi_1,\ldots,\xi_k] := [(\xi_1,\ldots,\xi_k)] \tm H$ denote the corresponding cylinder set.
    Similarly, for $\xi\in H$ and $n\in\N$ we let $[\xi]_n = [\xi_1,\ldots,\xi_n]$.
    We define the window $W$ iteratively:\\
%    \underline{Step 1:}
%    For $l = 0,\ldots,q_1-2$ we let
%    $$ M_1^{(l)} = \{\xi_2 \in \{0,\ldots,q_2-1\}: \xi_2 \mod 2^{l+1} < 2^l \}.$$
%    Now we define $U_1^{(l)} = \bigcup_{\xi_2\in M_1^{(2)}} [(l,\xi_2)]$ and $V_1^{(2)} = [l]\setminus U_1^{(l)}$.
%    Lasly, we let $U_1 = \bigcup_{l=0}^{q_1-2} U_1^{(l)}$ and $V_1 = \bigcup_{l=1}^{q_1-2} V_1^{(l)}$.
%    Note that $H\setminus (U_1\cup V_1) = [q_1-1]$.
%    Also, the fact that $q_2 = 2^{q_1-1}$ readily implies $\# M_1^{(l)} = q_2/2$ so that $\nu(U_1^{(l)}) = \nu(V_1^{(l)}) = \nu([l])/2$.\\
    \underline{Step $n$:}
    For $l= 0,\ldots,q_n-2$ let
    $$ M_n^{(l)} = \{\xi_{n+1} \in \{0,\ldots,q_{n+1}-1\} : (\xi_{n+1}\mod 2^{l+1}) < 2^l\}.$$
    Now we define $U_n^{(l)} = \bigcup_{\xi_{n+1} \in M_n^{(l)}} [q_1-1,\ldots,q_{n-1}-1,l,\xi_{n+1}]$ and $V_n^{(l)} = [q_1-1,\ldots,q_{n-1}-1,l]\setminus U_n^{(l)}$.
    Lastly, we let $U_n = \bigcup_{l=0}^{q_n-2} U_n^{(l)}$ and $V_n = \bigcup_{l=0}^{q_n-2} V_n^{(l)}$.\\
    For illustration, let us compute $U_1$ and $U_2$.
    Note that $q_1 = 3$, $q_2 = 2^{q_1-1} = 4$ and $q_3 = 2^{q_2-1} = 8$.
    This means that $U_1 = U_1^{(0)} \cup U_1^{(1)}$.
    We compute $M_1^{(0)} = \{\xi_2 \in \{0,1,2,3\} : (\xi_2\mod 2) < 1\} = \{0,2\}$, so that $U_1^{(0)} = [0,0] \cup [0,2]$.
    Analogously, it follows that $M_1^{(1)} = \{\xi_2\in \{0,1,2,3\} : (\xi_2 \mod 4) < 2\} = \{0,1\}$, so that $U_1^{(1)} = [1,0] \cup [1,1]$.
    Similarly, we see that $U_2 = \bigcup_{l=0}^2 U_2^{(l)}$ where $U_2^{(0)} = [2,0,0] \cup [2,0,2] \cup [2,0,4] \cup [2,0,6]$, $U_2^{(1)} = [2,1,0] \cup [2,1,1] \cup [2,1,4] \cup [2,1,5]$ and $U_2^{(2)} = [2,2,0]\cup[2,2,1]\cup[2,2,2]\cup[2,2,3]$.
    See Figure~\ref{fig: counterex} for a visualisation.

    \begin{figure}
    \centering
    \includegraphics[width=.7\linewidth]{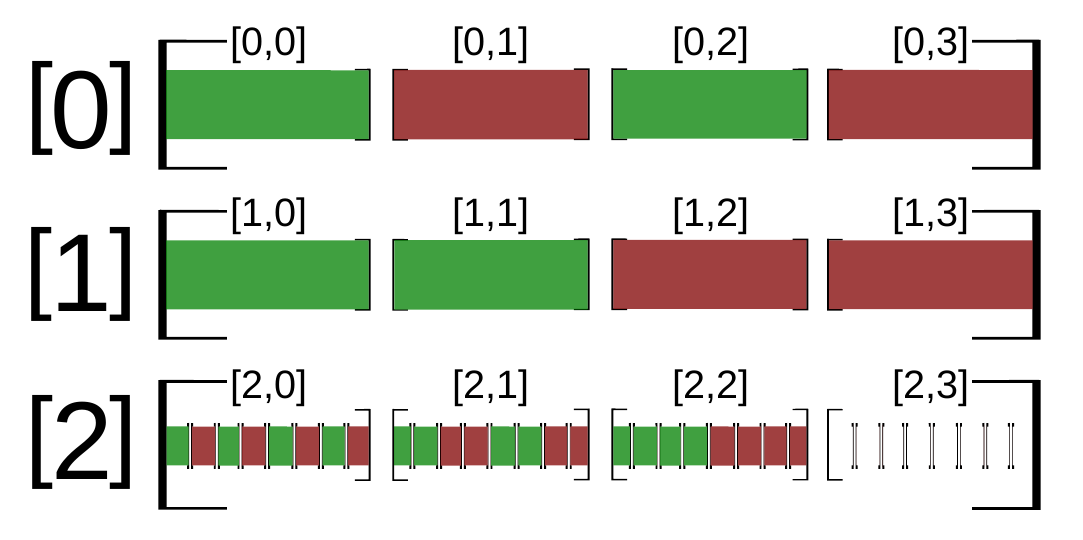}
    \caption{\it The construction of $W$ after 2 steps.
    The green area describes $U_1\cup U_2$ and the red area is $V_1\cup V_2$.}
    \label{fig: counterex}
    \end{figure}

    The fact that $q_{n+1} = 2^{q_n-1}$ readily implies $\# M_n^{(l)} = q_{n+1}/2$ so that
    $$  \nu(U_n^{(l)}) = \nu(V_n^{(l)})= \nu([q_1-1,\ldots,q_{n-1}-1,l])/2.$$
    An easy induction yields $H\setminus \bigcup_{j=1}^n (U_j\cup V_j) = [q_1-1,\ldots,q_n-1]$.\\
    We now define (similarly to the proof of Proposition~\ref{prop: relative Toeplitz generic windows}) $U = \bigcup_{n\in\N} U_n$, $V = \bigcup_{n\in\N} V_n$ and $W = \ol{U}$, so that $W$ is clearly proper.
    Moreover, by above observation we have
    \begin{equation}\label{eq: boundary of W (counterex)}
        \partial W = H\setminus (U\cup V) = \{(q_n-1)_{n\in\N}\} = \{\tau(-1)\},
    \end{equation}
    which shows regularity and $\ol{\mathrm{Dim}}_{\mathrm{Box}}(\partial W) = 0$ for any metric on $H$.\\
    To compute the upper amorphic complexity, let
    $$ \varepsilon_n = \nu([0_H]_n)/2 = \frac{1}{2}\cdot \prod_{j=1}^n q_j^{-1}.$$
    Let $\xi = (\xi_n)_{n\in\N}\in H$.
    Observe that if $\xi_1\neq 0$, then $\nu(W\symdiff (W-\xi))\geq \varepsilon_1$.
    Furthermore, if $\xi_1=0$ and $\xi_2 \neq 0$, then we even have $\nu(W\symdiff(W-\xi)) \geq \nu([0])\geq \varepsilon_1$.
    (This can be seen best visually in Figure~\ref{fig: counterex}.)
    Altogether this shows $B_{\varepsilon_1}^{d_W}(0_H) \tm [0,0]$.
    Since $\varepsilon_2<\varepsilon_1$, we clearly have $B_{\varepsilon_2}^{d_W}(0_H)\tm [0,0]$.
    However, similarly to above if $\xi_1=\xi_2=0$ but $\xi_3\neq 0$, then $\nu(W\symdiff(W-\xi))\geq \varepsilon_2$.
    Thus, $B_{\varepsilon_2}^{d_W}(0_H)\tm [0,0,0]$.
    Inductively we obtain $B_{\varepsilon_n}^{d_W}(0_H) \tm [0_H]_{n+1}$.
    Observe that this also implies irredundancy of $W$.\\
    Since the base of the logarithm does not matter for computing the box dimension, we let $\log$ denote the logarithm with base 2 for this proof.
    It follows that
    \begin{align*}
        \ol{\mathrm{ac}}(\ol{O_\Z(x_W)},\Z) &=\limsup_{\varepsilon\to 0} \frac{\log \nu(B_\varepsilon^{d_W}(0_H))}{\log \varepsilon}
            \geq \limsup_{n\to\infty} \frac{\log \nu(B_{\varepsilon_n}^{d_W}(0_H))}{\log \varepsilon_n}\\
            &\geq \limsup_{n\to\infty} \frac{\sum_{j=1}^{n+1} \log q_j}{\sum_{j=1}^n \log q_j+\log 2}
            \geq \limsup_{n\to\infty} \frac{\log q_{n+1}}{\sum_{j=1}^n \log q_j+ \log 2}\\
            &= \limsup_{n\to\infty} \frac{q_n-1}{\sum_{j=1}^n \log q_j+ 1}
            \geq \limsup_{n\to\infty} \frac{q_n-1}{n\log q_n +1}\\
            &\geq \limsup_{n\to\infty} \frac{q_n-1}{(\log q_n)^2+1}
            =\infty,
    \end{align*}
    where the last inequality follows since $n\leq \log q_n$ holds for all $n\in\N$.\\
    It should be noted that \eqref{eq: boundary of W (counterex)} clearly yields non-genericity of $W$.
    However, it suffices to replace $W$ by any $W^\prime =\zeta + W$ with $\zeta\in H\setminus \tau(\Z)$.
    Then, $W^\prime \in \mathcal{W}_{\mathrm{reg}}(H)$ and $\ol{\mathrm{Dim}}_{\mathrm{Box}}(\partial W^\prime) = 0$.
    Lastly, we have $\ol{\mathrm{ac}}(\ol{O_{\Z}(x_{W^\prime})},\Z) = \ol{\mathrm{ac}}(\ol{O_{\Z}(x_W)},\Z) = \infty$ since the metrics $d_{W^\prime}$ and $d_W$ coincide.
    (In fact, the systems are conjugate.)
    This concludes the proof.
\end{proof}

\begin{rem}
    With fairly straightforward adaptations to the construction one can obtain an analogous example with $\Z$ replaced by any finitely generated, torsion-free, nilpotent group $G$ and $H$ replaced by a suitable $G$-odometer.
    However, it should be mentioned that \cite[Thm.\ 1.3]{FuGrJaKw2023Amorphic} remains valid (with the proof given by the authors) whenever the internal group $H$ is $\R^d$ for some $d\in\N$.
\end{rem}

\subsection{Toeplitz arrays with arbitrary amorphic complexity}\label{sec: krieger/ac}
In this seciton we will give the proof of Theorem~\ref{thm: realize ac Toeplitz intro}.
We assume that $G$ is an amenable, residually finite group such that there exists integers $s\geq 1$, $p\geq 3$ and a decreasing sequence $(\Gamma_n)_{n\in\N}$ of finite index normal subgroups $\Gamma_n$ with $\bigcap_{n\in\N}\Gamma_n = \{1_G\}$ and $[G: \Gamma_n] = sp^n$ for all $n\in\N$.
We will discuss towards the end that this is satisfied for all finitely generated, torsion-free, nilpotent groups.
We let $\overleftarrow{G_*}$ denote the $G$-odometer associated to $(\Gamma_n)_{n\in\N}$ with the carry-over representation (c.f.\ Section~\ref{sec: odometers}).
We see that in view of Corollary~\ref{cor: ac is box dim of d_W}, it suffices to construct a window $W \in \mathcal{W}_{\mathrm{reg}}(\overleftarrow{G_*})$ such that $\mathrm{Dim}_{\mathrm{Box}}(\overleftarrow{G_*},d_W)$ is our arbitrary value.
We do this, by first constructing for each $t\in [0,1]$ a metric $d_t$ on $\overleftarrow{G_*}$ such that $\mathrm{Dim}_{\mathrm{Box}}(\overleftarrow{G_*},d_t)$ ($t\in [0,1]$) takes a large range of values and then construct a window $W$ so that $d_W$ and $d_t$ are Lipschitz-equivalent.\\
According to Section~\ref{sec: odometers}, $\overleftarrow{G_*} =\prod_{n\in\N} (D_n \cap \Gamma_{n-1})$, where $\Gamma_0 = G$ and $(D_n)_{n\in\N}$ is a suitable sequence of fundamental domains given by Lemma~\ref{lem: good fundamental domains}.
Also note that
\begin{equation}\label{eq: size D_n cap Gamma_{n-1}}
	\#(D_n\cap\Gamma_{n-1}) = [\Gamma_{n-1}:\Gamma_n] =\begin{cases} p &, \text{ if } n>1\\
														      sp &, \text{ if } n=1 \end{cases}.
\end{equation}
Recall that the Haar measure $\nu$ satisfies for cylinder sets $[\xi]_m := \prod_{n\leq m} \{\xi_n\} \times \prod_{n>m} (D_n\cap \Gamma_{n-1})$ that
$$\nu([\xi]_m) = \prod_{n\leq m}\frac{1}{\#(D_n\cap \Gamma_{n-1})} =\frac{1}{[G:\Gamma_m]} =  \frac{1}{sp^m}.$$
For the construction of $d_t$, fix $t\in [0,1]$ and let $N(t)$ be a subset of $\N$ such that $\mathrm{dens}(N(t)) := \lim_{n\to\infty} \frac{1}{n} \cdot \# \{1\leq k\leq n : k\in N(t)\}= t$.
We define for $n\in \N$
$$ \varepsilon(t,n) = p^{-\#\{1\leq k\leq n\::\: k\notin N(t)\}} \cdot \kl \frac{p}{p-2}\kr^{-\#\{1\leq k\leq n\::\: k\in N(t)\}}> 0.$$
For $\xi,\eta\in \overleftarrow{G_*}$, we let $d_t(\xi,\eta)=0$ if $\xi = \eta$.
Otherwise, let $n = \min\{k\in\N : \xi_k\neq \eta_k\}$.
In this case, we define $d_t(\xi,\eta) = \varepsilon(t,n)$.
The following observations are straightforward:
\begin{lem}\label{lem: properties d_t}
	\hspace{3em}
	\begin{enumerate}
		\item $d_t$ is an invariant (ultra-) metric on $\overleftarrow{G_*}$.
		\item $\varepsilon$-balls w.r.t.\ $d_t$ are cylinder sets.
		\item $M_{\varepsilon(t,n)} = [G:\Gamma_n] = sp^n$, where  $M_\varepsilon$ denotes the maximal cardinality of a $\varepsilon$-separated subset of $\overleftarrow{G_*}$ (w.r.t.\ the metric $d_t$).
		\item For all $n\in \N$ we have $0<\varepsilon(t,n)\leq 1$.
		Moreover, $\varepsilon(t,n)$ is decreasing in $n$ and we have $\varepsilon(t,n)\xrightarrow{n\to\infty} 0$ as well as $\frac{\varepsilon(t,n)}{\varepsilon(t,n+1)}\leq p$.
	\end{enumerate}
\end{lem}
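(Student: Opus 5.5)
The four items are elementary, so I will verify them directly from the definition of $d_t$ via the first index of disagreement, in the order (4), (1), (2), (3). Item (4) comes first because it is used in the others.

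For (4), write $a_n=\#\{k\leq n: k\notin N(t)\}$ and $b_n=\#\{k\leq n:k\in N(t)\}$, so that $\varepsilon(t,n)=p^{-a_n}(p/(p-2))^{-b_n}$. Since $p\geq 3$, both bases $p$ and $p/(p-2)$ are strictly bigger than $1$. This gives $0<\varepsilon(t,n)\leq 1$ and shows that $\varepsilon(t,n)$ is non-increasing; it is strictly decreasing because exactly one of $a_n,b_n$ increases by one in each step. The ratio $\varepsilon(t,n)/\varepsilon(t,n+1)$ equals $p$ or $p/(p-2)$. Since $p/(p-2)\leq 3\leq p$ for $p\geq 3$, the ratio is at most $p$. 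Finally, $a_n+b_n=n\to\infty$ and both bases are at least $\min\{p,p/(p-2)\}>1$, so $\varepsilon(t,n)\to 0$.

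For (1), symmetry and definiteness are clear. For the strong triangle inequality $d_t(\xi,\zeta)\leq\max\{d_t(\xi,\eta),d_t(\eta,\zeta)\}$, note that the first index where $\xi$ and $\zeta$ differ is at least the minimum of the first disagreement indices of the pairs $(\xi,\eta)$ and $(\eta,\zeta)$. Combined with the monotonicity from (4), this gives the ultrametric inequality.

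Invariance is the only point requiring the group structure, and I expect it to be the main (mild) obstacle. I would use Lemma~\ref{lem: carry-over rule light version}(1): the coordinate $(\xi\cdot\eta)_n$ depends only on the coordinates $\le n$ of $\xi$ and $\eta$. Hence, if $\xi_k=\eta_k$ for $k<n$, then $(\zeta\xi)_k=(\zeta\eta)_k$ for $k<n$. Applying this also to $\zeta^{-1}$ shows that left multiplication preserves the first disagreement index exactly; the right-hand version is analogous. An alternative route is to observe that "agreement in the first $n$ coordinates" means lying in the same level-$n$ cylinder. Via $\Pi$, these cylinders correspond to cosets of the open normal subgroup $\{\xi\in\overleftarrow{G}: \xi_n=\Gamma_n\}$, using that $\Pi$ maps level-$n$ cylinders to level-$n$ cylinders. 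Equivalence modulo a normal subgroup is invariant under both left and right multiplication, which gives bi-invariance immediately. I will present this coset argument, since it sidesteps the carry formula.

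For (2), let $n$ be maximal with $\varepsilon(t,n)\geq\varepsilon$ (and $n=0$ if none exists). Then $B_\varepsilon(\xi)=\{\eta: d_t(\xi,\eta)<\varepsilon\}=\{\eta:\eta_k=\xi_k \text{ for } k\leq n\}=[\xi]_n$ by strict monotonicity of $\varepsilon(t,\cdot)$.

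For (3), the level-$n$ cylinders partition $\overleftarrow{G_*}$ into $\prod_{k\le n}\#(D_k\cap\Gamma_{k-1})=sp^n$ pieces, using \eqref{eq: size D_n cap Gamma_{n-1}}. Choosing one point in each gives pairwise distances of at least $\varepsilon(t,n)$, so the set is $\varepsilon(t,n)$-separated. Conversely, two points in the same level-$n$ cylinder first differ at an index $m>n$, so their distance is $\varepsilon(t,m)<\varepsilon(t,n)$. Hence an $\varepsilon(t,n)$-separated set contains at most one point per cylinder, and $M_{\varepsilon(t,n)}=sp^n$.
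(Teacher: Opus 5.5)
Your proposal is correct. The paper gives no proof of this lemma and only calls the observations straightforward; your verification directly from the first-disagreement-index definition is exactly the routine check intended, including the point that strict monotonicity of $\varepsilon(t,\cdot)$ is what makes (2) and the upper bound in (3) work. For invariance, both of your routes are sound: the coset argument (level-$n$ cylinders correspond under $\Pi$ to cosets of the normal subgroup $\ker(\overleftarrow{G}\to G/\Gamma_n)$), and Lemma~\ref{lem: carry-over rule light version}(1) applied to $\zeta$ and $\zeta^{-1}$. They even give bi-invariance, whereas only left-invariance is used later in Lemma~\ref{lem: Lipschitz equiv}.
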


\begin{lem}\label{lem: box dim d_t}
	$\mathrm{Dim}_{\mathrm{Box}}(\overleftarrow{G_*},d_t) =  \kl 1- t\cdot \frac{\log(p-2)}{\log p}\kr^{-1} =\kl (1-t) + t\cdot\frac{\log p}{\log p - \log(p-2)}\kr$.
\end{lem}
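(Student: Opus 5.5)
The plan is to compute the box dimension directly from the separation numbers of Lemma~\ref{lem: properties d_t}. By (3), $\mathrm{Sep}(\overleftarrow{G_*},d_t,\varepsilon(t,n)) = sp^n$. By (2), separation numbers are constant for $\varepsilon$ strictly between consecutive values $\varepsilon(t,n+1)<\varepsilon\le\varepsilon(t,n)$. More precisely, $\mathrm{Sep}(\varepsilon)$ lies between $sp^{n}$ and $sp^{n+1}$ on that interval, depending on the convention at the endpoints. Set $a_n = \#\{k\le n : k\notin N(t)\}$ and $b_n = n-a_n$. Then
\[ -\log \varepsilon(t,n) = a_n\log p + b_n\bigl(\log p-\log(p-2)\bigr) = n\log p - b_n\log(p-2). \]

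First, along the sequence $\varepsilon(t,n)$ we get
\[ \frac{\log(sp^n)}{-\log\varepsilon(t,n)} = \frac{\log s + n\log p}{n\log p - b_n\log(p-2)}. \]
Since $b_n/n\to t$ by the choice of $N(t)$, this ratio converges to
\[ \frac{\log p}{\log p - t\log(p-2)} = \Bigl(1-t\cdot\frac{\log(p-2)}{\log p}\Bigr)^{-1}. \]
The denominator is positive because $p\ge 3$ gives $\log(p-2)<\log p$, even when $t=1$.

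Second, I would pass from the sequence to all $\varepsilon\to 0$ by the standard sandwich argument. For $\varepsilon(t,n+1)<\varepsilon\le\varepsilon(t,n)$, monotonicity gives
\[ \frac{\log(sp^{n})}{-\log\varepsilon(t,n+1)} \le \frac{\log \mathrm{Sep}(\varepsilon)}{-\log\varepsilon} \le \frac{\log(sp^{n+1})}{-\log\varepsilon(t,n)}. \]
Lemma~\ref{lem: properties d_t}(4) gives $\varepsilon(t,n)/\varepsilon(t,n+1)\le p$, so $-\log\varepsilon(t,n+1)$ and $-\log\varepsilon(t,n)$ differ by at most $\log p$, and both grow linearly in $n$. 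Hence both bounds have the same limit, so the upper and lower box dimensions coincide and equal the value above. The step needing the most care is exactly this passage to arbitrary $\varepsilon$, though it is routine given (4). One also needs $-\log\varepsilon(t,n)\gtrsim n(\log p-\log(p-2))>0$, which guarantees linear growth.

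Finally, the second expression in the statement should follow by pure algebra. I would start from $1-t+t\log p/(\log p-\log(p-2))$ and compare it with $\log p/(\log p - t\log(p-2))$. These are not obviously equal for intermediate $t$: they agree at $t=0$ and $t=1$, but one is linear in $t$ and the other is a Möbius function of $t$. So the identity as literally stated may be a typo or a convexity inequality. I would verify the first formula carefully and treat the second only as agreeing at the endpoints, or as the intended reformulation.
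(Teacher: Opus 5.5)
Your argument for the first equality is correct and follows the paper's route: the paper also evaluates $\log(sp^n)/(-\log\varepsilon(t,n))$ along the sequence and passes to arbitrary $\varepsilon\to 0$ via the bounded ratio $\varepsilon(t,n)/\varepsilon(t,n+1)\le p$ from Lemma~\ref{lem: properties d_t}(4), citing Falconer where you write out the sandwich by hand. Your doubt about the second expression is justified: with $L=\log p$ and $M=\log(p-2)$ one gets $(1-t)+t\frac{L}{L-M}-\frac{L}{L-tM}=\frac{t(1-t)M^2}{(L-M)(L-tM)}$, so the ``second equality'' fails for $p\ge 4$ and $0<t<1$ (it is only the convexity upper bound), which is harmless downstream because the realization theorem only uses that the first formula is continuous in $t$ with values $1$ and $L/(L-M)$ at $t=0$ and $t=1$.
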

\begin{proof}
	Lemma~\ref{lem: properties d_t} (4) implies $\mathrm{Dim}_{\mathrm{Box}}(\overleftarrow{G_*},d_t) = \lim_{n\to\infty} \frac{\log M_{\varepsilon(t,n)}}{\log \varepsilon(t,n)}$ (see \cite[p.\ 41]{Falconer1990Fractal}).
	We therefore compute
	\begin{align*}
		\mathrm{Dim}_{\mathrm{Box}}(\overleftarrow{G_*},d_t) &= \lim_{n\to\infty} \frac{n\log p + \log s}{n \log p -  \#\{1\leq k\leq n: k\in N(t)\} \log(p-2)}\\
		&= \kl \lim_{n\to\infty} \frac{n\log p - \#\{1\leq k\leq n: k\in N(t)\} \log(p-2)}{n\log p} \kr^{-1} \\
		&\hspace{1cm}+ \underbrace{\lim_{n\to\infty} \frac{\log s}{n\log p - \#\{\ldots\}\log(p-2)}}_{=0}\\
		&= \kl 1-t\cdot \frac{\log(p-2)}{\log p}\kr^{-1}.
	\end{align*}
	This shows the first equality.
	The second equality is elementary.
\end{proof}

We now wish to construct a window $W := W_t \in \mathcal{W}_{\mathrm{reg}}(\overleftarrow{G_*})$ such that $d_W$ (defined as in Section~\ref{sec: ac}) is Lipschitz-equivalent to $d_t$, in particular yields the same box dimension.
The approach is very similar to \cite[Section 7.2]{CortezDrewloGomezJager2025Model}.\\
Choose a partition $\{A_n,B_n,C_n\}$ of $D_n\cap\Gamma_{n-1}$ such that for all $n\in\N$ we have $\# A_n \geq 1$ and $B_n = \{1_G\}$.
Moreover, we want that
$$\# C_1 = \begin{cases} s(p-2) &,\text{ if } 1\in N(t) \\ s &, \text{ otherwise} \end{cases} \text{ as well as }\# C_n = \begin{cases} p-2 &,\text{ if } n \in N(t) \\ 1 &, \text{ otherwise} \end{cases} \text{ for } n>1.$$
(By \eqref{eq: size D_n cap Gamma_{n-1}} such a choice is always possible.)
We now define
\begin{align*}
    &X_n = \prod_{k<n} C_k \times A_n \times \prod_{k>n}(D_k\cap\Gamma_{k-1}),\\
    &Y_n = \prod_{k<n} C_k \times B_n \times \prod_{k>n}(D_k\cap\Gamma_{k-1}), \text{ and}\\
    &Z_n = \prod_{k\leq n}C_k \times \prod_{k>n}(D_k\cap\Gamma_{k-1}).
\end{align*}
Lastly, let $W = \ol{\bigcup_{n\in\N} X_n}$.
The following observations are essentially \cite[Lem.\ 7.6, Lem.\ 7.7 and Cor.\ 7.10]{CortezDrewloGomezJager2025Model}.
However, since the authors present their construction (and hence the proofs) for the inverse limit representation $\overleftarrow{G}$ of the odometer, we reiterate the proofs for $\overleftarrow{G_*}$.
\begin{lem}
    The window is proper, generic and regular with $\mathrm{int}(W) = \bigcup_{n\in\N} X_n =: U$ and $\partial W = \bigcap_{n\in\N} Z_n$.
\end{lem}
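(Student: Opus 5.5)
The plan is to describe $\overleftarrow{G_*}$ digit by digit and to show that the sets $X_n$, $Y_n$, $Z_n$ give a clean trichotomy. First I will check that for each $n$ the family $\{X_1,Y_1,\ldots,X_n,Y_n,Z_n\}$ is a partition of $\overleftarrow{G_*}$ into finite unions of cylinder sets of level $n$. This follows by induction on $n$: $Z_{n-1}$ splits according to the $n$-th coordinate lying in $A_n$, $B_n$ or $C_n$, and these three pieces are exactly $X_n$, $Y_n$ and $Z_n$. Cylinder sets are clopen, so all of these sets are clopen. The sets $Z_n$ decrease, and all $X_k,Y_k$ are pairwise disjoint. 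Hence every $\xi\notin Z:=\bigcap_n Z_n$ lies in exactly one $X_k$ or one $Y_k$, where $k$ is the first index with $\xi_k\notin C_k$.

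Next I identify interior and boundary. Put $U=\bigcup_n X_n$ and $V=\bigcup_n Y_n$; both are open. Then $\overleftarrow{G_*}=U\uplus V\uplus Z$. Since $V$ is open and disjoint from $U$, we get $W=\ol U\subseteq U\cup Z$. Conversely, let $\xi\in Z$ and fix $n$. The cylinder $[\xi]_n$ has its first $n$ coordinates in $C_1,\dots,C_n$. Choosing the $(n+1)$-th coordinate in $A_{n+1}\neq\emptyset$ produces a point of $X_{n+1}\cap[\xi]_n$. Thus $\xi\in\ol U$, and we obtain $W=U\cup Z$. The same argument with $B_{n+1}=\{1_G\}$ shows that every neighbourhood of $\xi\in Z$ meets $V$, so $\xi\notin\mathrm{int}(W)$. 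Consequently $\mathrm{int}(W)=U$ and $\partial W=W\setminus U=Z$. Properness is then immediate, because $W=\ol U=\ol{\mathrm{int}(W)}$ and $W$ is closed in the compact space $\overleftarrow{G_*}$.

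For regularity I compute $\nu(Z_n)=\prod_{k\le n}\#C_k/\#(D_k\cap\Gamma_{k-1})$. Every factor is at most $(p-2)/p$ when $k\in N(t)$ and at most $1/p$ otherwise (for $k=1$ the factors $s$ cancel). So $\nu(Z_n)\le((p-2)/p)^n\to 0$, and hence $\nu(\partial W)=0$.

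For genericity I use \eqref{eq: chara image of tau_*}: elements of $\tau_*(G)$ are eventually constant $1_G$. If such an element lay in $Z$, then for large $n$ we would have $1_G\in C_n$. But $1_G\in B_n$, and $B_n$ is disjoint from $C_n$, which is a contradiction. Therefore $\partial W\cap\tau_*(G)=\emptyset$.

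The only mildly delicate point is the partition bookkeeping in the first step, and in particular justifying $W=U\cup Z$ exactly. That equality is what forces the nonemptiness of both $A_{n+1}$ and $B_{n+1}$ to enter the argument.
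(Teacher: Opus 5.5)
Your proposal is correct and follows essentially the same route as the paper: the decomposition $\overleftarrow{G_*}=U\uplus V\uplus\bigcap_{n}Z_n$; the points $(\xi_1,\ldots,\xi_n,a_{n+1},\ldots)$ and $(\xi_1,\ldots,\xi_n,1_G,\ldots)$, which show that $\bigcap_n Z_n$ lies in the boundary; the product formula for $\nu(Z_n)$; and the eventually-$1_G$ characterization \eqref{eq: chara image of tau_*} for genericity. The only step left tacit is $1/p\le (p-2)/p$, which holds because $p\ge 3$ and is what justifies bounding every factor uniformly by $(p-2)/p$.
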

\begin{proof}
    First, observe that with $V := \bigcup_{n\in\N} Y_n$ we have $\overleftarrow{G_*} = U\uplus V \uplus \bigcap_{n\in\N} Z_n$.
    We clearly have $U\tm \mathrm{int}(W)$, $V\tm \overleftarrow{G_*}\setminus W$ and hence $\partial W\tm \bigcap_{n\in\N} Z_n$.
    The first inclusion implies properness.
    Now let $\xi \in\bigcap_{n\in\N} Z_n$. To show that $\xi\in \partial W$, it suffices to find for every $n\in\N$ some $\eta_1 \in [\xi]_n \cap W$ and some $\eta_2\in [\xi]_n \setminus W$.
    For this, just pick $a_{n+1}\in A_{n+1}$ and arbitrary $\eta_k \in D_k\cap \Gamma_{k-1}$ for $k>n+1$.
    It is easy to verify that $\eta_1 = (\xi_1,\ldots, \xi_n, a_{n+1}, \eta_{n+2},\ldots)$ and $\eta_2 = (\xi_1,\ldots,\xi_n, 1_G, \eta_{n+2},\ldots)$ do the job.
    This shows $\partial W = \bigcap_{n\in\N} Z_n$ and hence also $\mathrm{int}(W) = U$.\\
    We can now compute
    $$ \nu(\partial W) = \lim_{n\to\infty} \nu(Z_n) = \lim_{n\to\infty} \prod_{k=1}^n \frac{\# C_k}{\# (D_k\cap \Gamma_{k-1})} \leq \lim_{n\to\infty} \kl\frac{p-2}{p}\kr^n = 0,$$
    so that $W$ is regular.
    If we let $\tau_*$ denote the canonical embedding of $G$ into $\overleftarrow{G_*}$, then by \eqref{eq: chara image of tau_*} that the elements of $\tau_*(G)$ are exactly the sequences $\xi$ which are eventually constant $1_G$.
    Together with the fact that $1_G\notin Z_n$ for every $n\in\N$, we obtain genericity of $W$.
\end{proof}

It remains to show the desired Lipschitz-equivalence.
\begin{lem}\label{lem: Lipschitz equiv}
	The pseudometric $d_W$ defined via $d_W(\xi,\eta) = \nu(W\xi^{-1}\symdiff W\eta^{-1})$ is Lipschitz-equivalent to $d_t$.
	In particular, $W$ is irredundant and $d_W$ is a metric.
\end{lem}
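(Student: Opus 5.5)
By left-invariance of both $d_W$ and $d_t$ (Lemma~\ref{lem: properties d_t}(1) and the remark after the definition of $d_W$), it suffices to compare $d_W(1,\xi)=\nu(W\symdiff W\xi^{-1})$ with $d_t(1,\xi)=\varepsilon(t,n)$. Here $1=(1_G,1_G,\ldots)$ is the identity of $\overleftarrow{G_*}$, $\xi\neq 1$, and $n=\min\{k:\xi_k\neq 1_G\}$. The goal is constants $c,C>0$, depending only on $s,p$, such that $c\,\varepsilon(t,n)\le d_W(1,\xi)\le C\,\varepsilon(t,n)$.

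\emph{Preliminary observation.} First I record how right multiplication by $\xi$ acts on coordinates. Let $\zeta\in\overleftarrow{G_*}$ and $m<n$. By Lemma~\ref{lem: carry-over rule light version}(2), $(\zeta\xi)_m\in\zeta_m\Gamma_m$, since all earlier $\xi_k=1_G$. Both $(\zeta\xi)_m$ and $\zeta_m$ lie in the fundamental domain $D_m$, so $(\zeta\xi)_m=\zeta_m$. At level $n$, Lemma~\ref{lem: carry-over rule light version}(2) similarly gives $(\zeta\xi)_n=\sigma(\zeta_n)$, where $\sigma$ is the permutation of $D_n\cap\Gamma_{n-1}$ sending $c$ to the representative of $c\xi_n\Gamma_n$. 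This $\sigma$ is fixed-point free, because $\xi_n\in D_n\setminus\{1_G\}$ implies $\xi_n\notin\Gamma_n$.

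\emph{Upper bound.} Write $U=\bigcup X_k\subseteq W$ and $V=\bigcup Y_k\subseteq \overleftarrow{G_*}\setminus W$. For $k<n$, the sets $X_k$ and $Y_k$ are unions of cylinders of level $k\le n-1$. By the observation above, $\zeta\in X_k$ if and only if $\zeta\xi\in X_k$, and likewise for $Y_k$. Hence $W\symdiff W\xi^{-1}\subseteq Z_{n-1}$, so $d_W(1,\xi)\le \nu(Z_{n-1})=\prod_{k<n}\#C_k/\#(D_k\cap\Gamma_{k-1})$. By the choice of the sizes $\#C_k$, this product equals $\varepsilon(t,n-1)$, possibly up to the harmless factor coming from $k=1$. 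Lemma~\ref{lem: properties d_t}(4) then gives $\varepsilon(t,n-1)\le p\,\varepsilon(t,n)$.

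\emph{Lower bound (main step).} I consider only $\zeta\in Z_{n-1}$ and split into cases according to $\sigma$ and $B_n=\{1_G\}$.
\begin{enumerate}
\item If $\sigma(1_G)\in A_n$: every $\zeta\in Z_{n-1}$ with $\zeta_n=1_G$ lies in $Y_n\subseteq V$, while $\zeta\xi\in X_n\subseteq W$. This set has measure $\nu(Z_{n-1})/\#(D_n\cap\Gamma_{n-1})$.
\item If $a:=\sigma^{-1}(1_G)\in A_n$: the same argument with $\zeta_n=a$ applies, with the roles of $W$ and $V$ exchanged.
\item Otherwise $c:=\sigma^{-1}(1_G)\in C_n$. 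Take $\zeta\in Z_{n-1}$ with $\zeta_n=c$ and $\zeta_{n+1}\in A_{n+1}$. Then $\zeta\in X_{n+1}\subseteq W$, while $\zeta\xi$ has $n$-th coordinate $1_G$ and lies in $Z_{n-1}$, hence in $Y_n\subseteq V$. This set has measure at least $\nu(Z_{n-1})/(sp)^2$.
\end{enumerate}
In every case $d_W(1,\xi)\ge \nu(Z_{n-1})/(sp)^2\ge \varepsilon(t,n)/(sp)^2$. I expect this case analysis to be the main obstacle. The delicate point is ruling out the situation where $\sigma$ moves the $B_n$-point only inside $C_n$, which is exactly why case (3) passes to level $n+1$ using $\#A_{n+1}\ge1$.

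\emph{Conclusion.} The two bounds give the Lipschitz equivalence. Since $d_t$ is a metric, $d_W(1,\xi)>0$ for all $\xi\neq1$, so $d_W$ is a metric. Then $\nu(W\symdiff W\xi)>0$ for $\xi\neq 1$, so $W$ is Haar aperiodic, and Corollary~\ref{cor: chara irred for proper} together with properness of $W$ yields irredundancy.
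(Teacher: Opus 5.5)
Your proof is correct and follows the paper's argument. It uses the same reduction to $d_W(1,\xi)$ versus $\varepsilon(t,n)$, the same upper bound via $W\symdiff W\xi^{-1}\subseteq Z_{n-1}$, and a lower bound built from $Y_n$ that falls back to level $n+1$ when $A_n$ cannot be used directly. The one difference is in the hard case: the paper keeps $\zeta_n=1_G$ and chooses $\zeta_{n+1}=h$ so that $(\zeta\xi)_{n+1}\in A_{n+1}$, which needs the carry-over rule at level $n+1$; you take $\zeta_n=\sigma^{-1}(1_G)\in C_n$ and $\zeta_{n+1}\in A_{n+1}$ and send $\zeta\xi$ into $Y_n$, which only needs control of the first $n$ coordinates and makes your case (1) redundant.
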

\begin{proof}
	Since both pseudometrics $d_t$ and $d_W$ are left-invariant, it suffices to show
	$$ \frac{1}{sp^2} \cdot d_t(\xi,1_{\overleftarrow{G_*}}) \leq d_W(\xi,1_{\overleftarrow{G_*}}) \leq p \cdot d_t(\xi,1_{\overleftarrow{G_*}})$$
	for all $\xi\in \overleftarrow{G_*}$.
	If $\xi = 1_{\overleftarrow{G_*}}$, there is nothing to prove.
	Therefore let $n = \min\{k\in \N: \xi_k \neq (1_{\overleftarrow{G_*}})_k = 1_G\}$, so that $d_t(\xi,1_{\overleftarrow{G_*}}) = \varepsilon(t,n)$.
	As $\xi_k =  1_G$ for all $k<n$, Lemma~\ref{lem: carry-over rule light version} implies that (right) multiplication with $\xi$ leaves all cylinders of level $k<n$ fixed.
	In particular, $\bigcup_{k<n} X_k\xi = \bigcup_{k<n} X_k$ and $\bigcup_{k<n} Y_k \xi = \bigcup_{k<n} Y_k$.
	Because $\overleftarrow{G_*} = \bigcup_{k<n} X_k \uplus\bigcup_{k<n} Y_k \uplus Z_{n-1}$, we deduce $W\xi^{-1} \symdiff W \tm Z_{n-1}$.
	Hence,
	\begin{align*}
		d_W(\xi,1_{\overleftarrow{G_*}}) &= \nu(W\xi^{-1}\symdiff W) \leq \nu(Z_{n-1})
		= \prod_{k=1}^{n-1} \frac{\#C_k}{\# (D_k\cap \Gamma_{k-1})}\\
		&= p^{-\#\{1\leq k \leq n-1\: :\: k\notin N(t)\}} \cdot \kl\frac{p}{p-2}\kr^{-\#\{1\leq k\leq n-1\: :\: k\in N(t)\}}\\
		&= \varepsilon(t,n-1)
		\leq p\cdot \varepsilon(t,n)
		= p\cdot d_t(\xi,1_{\overleftarrow{G_*}}).
	\end{align*}
	On the other hand, by construction of $Y_n$ and $B_n$ we have
	$$Y_n = \prod_{k<n} C_k \times \{1_G\} \times \prod_{k>n} (D_k\cap\Gamma_{k-1}) \tm \overleftarrow{G_*}\setminus W.$$
	Since $\xi_n \neq 1_G$ we either have $\xi_n\in A_n$ or $\xi_n\in C_n$.
	The first case implies $Y_n \xi \tm X_n$, so that $Y_n \tm W\xi\symdiff W$ and
	\begin{align*}
		d_W(\xi,1_{\overleftarrow{G_*}}) \geq \nu(Y_n) \geq \frac{1}{sp}\cdot \varepsilon(t,n-1) \geq \frac{1}{sp} \cdot \varepsilon(t,n) = \frac{1}{sp}\cdot d_t(\xi,1_{\overleftarrow{G_*}}).
	\end{align*}
	In the second case we can pick $h\in D_{n+1}\cap \Gamma_n$ such that $h\xi_{n+1} \in A_{n+1} \Gamma_{m+1}$.
	Now it is not hard to see that
 	$$ \prod_{k<n} C_k \times \{1_G\}\times \{h\} \times \prod_{k>n+1}(D_k\cap\Gamma_{k-1}) \tm W\xi^{-1}\symdiff W.$$
	Hence,
	\begin{align*}
		d_W(\xi,1_{\overleftarrow{G_*}}) \geq \frac{1}{sp^2}\cdot \varepsilon(t,n-1) \geq \frac{1}{sp^2} \cdot d_t(\xi,1_{\overleftarrow{G_*}}),
	\end{align*}
	finishing the proof of the Lipschitz equivalence.
	Since $d_t$ is positive definite, Lipschitz equivalence implies that $d_W$ is positive definite as well.
	Therefore, whenever $0= \nu(W\symdiff W\xi) = d_W(1_{\overleftarrow{G_*}},\xi^{-1})$ we have $\xi^{-1} = 1_{\overleftarrow{G_*}} = \xi$, showing irredundancy of $W$.
\end{proof}
Now we arrive at the main conclusion of this subsection.
Recall that the sequence $(\Gamma_n)_{n\in\N}$ (and hence the odometer $\overleftarrow{G_*}$) are fixed.
\begin{thm}
	For any $a\in [1,\infty)$ there exists a regular Toeplitz array $x\in\{0,1\}^G$ which is almost automorphic over $\overleftarrow{G_*}$ and such that $\mathrm{ac}(\ol{O_G(x)},G) = a$.
\end{thm}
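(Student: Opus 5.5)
The plan is to put together the pieces built in this subsection. By Lemma~\ref{lem: box dim d_t}, the map
\[ t\mapsto \kl 1-t\cdot\frac{\log(p-2)}{\log p}\kr^{-1} \]
is continuous and strictly increasing on $[0,1]$. It takes the value $1$ at $t=0$ and the value $\frac{\log p}{\log p-\log(p-2)}$ at $t=1$.

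First we dispose of the range issue. The maximal value $\frac{\log p}{\log p-\log(p-2)}$ tends to $\infty$ as $p\to\infty$. Hence for a given $a\in[1,\infty)$ we may need $p$ larger than the one fixed by the hypothesis $[G:\Gamma_n]=sp^n$. To handle this, pass to the subsequence $\Gamma'_n:=\Gamma_{mn}$. This gives $[G:\Gamma'_n]=s(p^m)^n$, so the standing assumption holds with $p$ replaced by $p^m$. The associated odometer is unchanged up to isomorphism of topological groups, by \cite[Lem.\ 2]{CortezPetite2008Odometers} as already used after Lemma~\ref{lem: good fundamental domains}. Choose $m$ with $\frac{\log p^m}{\log p^m-\log(p^m-2)}\ge a$. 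By the intermediate value theorem there is then $t\in[0,1]$ with $\mathrm{Dim}_{\mathrm{Box}}(\overleftarrow{G_*},d_t)=a$. Next pick $N(t)\subseteq\N$ with density $t$, for instance $N(t)=\{k:\lfloor kt\rfloor>\lfloor (k-1)t\rfloor\}$.

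Next we build the window. Take the window $W=W_t$ constructed above. By the preceding lemma it is proper, generic and regular, and by Lemma~\ref{lem: Lipschitz equiv} it is irredundant. Hence $W\in\mathcal{W}_{\mathrm{reg}}(\overleftarrow{G_*})$. Set $x=x_W$. Theorem~\ref{thm: 1-1 regular windows regular Toeplitz}, together with Theorem~\ref{thm: torus parametrisation} (genericity giving almost automorphy), shows that $x$ is a regular Toeplitz array which is almost automorphic over $\overleftarrow{G_*}$.

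Finally we compute the amorphic complexity. By Corollary~\ref{cor: ac is box dim of d_W} and its lower analogue, the upper and lower amorphic complexities equal the upper and lower box dimensions of $(\overleftarrow{G_*},d_W)$. Box dimensions (upper and lower) are invariant under Lipschitz equivalence of metrics, since $\mathrm{Sep}(\cdot,\varepsilon)$ changes only by rescaling $\varepsilon$ by a constant, which does not affect the limit of $\log\mathrm{Sep}/(-\log\varepsilon)$. By Lemma~\ref{lem: Lipschitz equiv} both therefore coincide with $\mathrm{Dim}_{\mathrm{Box}}(\overleftarrow{G_*},d_t)=a$. Hence $\mathrm{ac}(\ol{O_G(x)},G)$ exists and equals $a$.

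The main obstacle is the first step. We must make sure that enlarging $p$ through a subsequence keeps every earlier ingredient valid: Lemma~\ref{lem: good fundamental domains} must be reapplied to the new sequence, the carry-over representation rebuilt, and the identification of the odometers must be compatible with the embedding $\tau$. Only then is "almost automorphic over $\overleftarrow{G_*}$" meant for the same compactification.
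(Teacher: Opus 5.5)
Your proposal is correct and follows essentially the same route as the paper: you realize every value in $\bigl[1,\frac{\log p}{\log p-\log(p-2)}\bigr]$ via $d_t$, the window $W_t$, the Lipschitz equivalence and Corollary~\ref{cor: ac is box dim of d_W}, and you reach larger values by passing to $\Gamma_{Kn}$ and invoking \cite[Lem.\ 2]{CortezPetite2008Odometers}. One minor slip: for $p=3$ the map $t\mapsto\bigl(1-t\frac{\log(p-2)}{\log p}\bigr)^{-1}$ is constant equal to $1$ rather than strictly increasing, but your intermediate value argument only uses continuity, so nothing breaks.
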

\begin{proof}
	Assume first that $a \leq \frac{\log p}{\log p - \log(p-2)}$.
	Then, by Lemma~\ref{lem: box dim d_t} and Lemma~\ref{lem: Lipschitz equiv} there exists a window $W\in \mathcal{W}_{\mathrm{reg}}(\overleftarrow{G_*})$ such that $\mathrm{Dim}_{\mathrm{Box}}(\overleftarrow{G_*},d_W) = a$.
	Corollary~\ref{cor: ac is box dim of d_W} now yields that $\mathrm{ac}(\ol{O_G(x_W)},G) = a$ and Theorem~\ref{thm: 1-1 regular windows regular Toeplitz} shows that $x_W$ is a regular Toeplitz array which is almost automorphic over $\overleftarrow{G_*}$.
	Thus, we can realize any amorphic complexity between $1$ and $L(p):= \frac{\log p}{\log p - \log(p-2)}$.
	Now observe that for fixed $K\in \N$, the subsequence $\Gamma_n^\prime = \Gamma_{Kn}$ satisfies $[G:\Gamma_n^\prime] = sp^{Kn}$.
	Therefore, we can realize any amorphic complexity between $1$ and $L(p^K)$ with a regular Toeplitz array $x$ which is almost automorpic over the odometer $\overleftarrow{G^\prime_*}$ associated to $(\Gamma_n^\prime)_{n\in\N}$.
	However, recall that $\overleftarrow{G^\prime_*}$ and $\overleftarrow{G_*}$ are isomorphic as topological groups by \cite[Lem.\ 2]{CortezPetite2008Odometers}.
	Letting $K\to \infty$ finishes the proof.
\end{proof}

In order to prove Theorem~\ref{thm: realize ac Toeplitz intro}, it remains to show that all finitely generated, torsion-free, nilpotent groups $G$ are amenable and there exist $s\geq 1$, $p\geq 3$ and a decreasing sequence $(\Gamma_n)_{n\in\N}$ of finite index normal subgroups with $\bigcap_{n\in\N} \Gamma_n = \{1_G\}$ and $[G:\Gamma_n] = sp^n$.
Note that amenability already follows from nilpotency.
Regarding the second condition, an even stronger result holds which can be proved using known results from the theory of (analytic) pro-$p$ groups.

\begin{prop}
	Let $G$ be a finitely generated, torsion-free, nilpotent group.
	Then, for every odd prime $q$ there exist $s,h\geq 1$ and a sequence $(\Gamma_n)_{n\in\N}$ of finite index normal subgroups such that $\bigcap_{n\in\N}\Gamma_n = \{1_G\}$ and $[G:\Gamma_n] = sq^{hn}$ for all $n\in\N$.
\end{prop}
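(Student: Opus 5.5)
Fix an odd prime $q$. I will obtain the $\Gamma_n$ from the pro-$q$ completion of $G$. The argument rests on three standard facts about a finitely generated torsion-free nilpotent group $G$ of Hirsch length $h$.

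\begin{enumerate}
\item $G$ is residually a finite $q$-group (Gruenberg).
\item Its pro-$q$ completion $\widehat{G}_q$ is a finitely generated torsion-free nilpotent pro-$q$ group.
\item $G$ embeds in $\widehat{G}_q$.
\end{enumerate}

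Torsion-free nilpotent pro-$q$ groups are $q$-adic analytic. By Lazard (see Dixon--du Sautoy--Mann--Segal), $\widehat{G}_q$ has an open normal uniform subgroup $U$ of dimension $d$. Here $d$ equals the Hirsch length $h$, since the Mal'cev completion over $\mathbb{Q}_q$ has dimension $h$.

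For a uniform group $U$, the lower $q$-series $P_1(U)=U$, $P_{i+1}(U)=P_i(U)^q[P_i(U),U]$ has two key properties. Each $P_i(U)$ is characteristic in $U$, hence normal in $\widehat{G}_q$, because $U$ is normal. Moreover $[P_i(U):P_{i+1}(U)]=q^{d}$, and $\bigcap_i P_i(U)=\{1\}$. Set $s=[\widehat{G}_q:U]\cdot q^{-d}\cdot q^{d}$; concretely I put $s:=[\widehat{G}_q:P_1(U)]\geq 1$, which is a power of $q$. Then $[\widehat{G}_q:P_{n+1}(U)]=sq^{dn}$.

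Next define $\Gamma_n:=G\cap P_{n+1}(U)$. Each $P_{n+1}(U)$ is open, and $G$ is dense in $\widehat{G}_q$. Hence $G/\Gamma_n\cong \widehat{G}_q/P_{n+1}(U)$, so $[G:\Gamma_n]=sq^{hn}$. Normality of $\Gamma_n$ in $G$ follows from normality of $P_{n+1}(U)$ in $\widehat{G}_q$. Finally $\bigcap_n\Gamma_n\subseteq\bigcap_n P_n(U)=\{1\}$, using that $G\to\widehat{G}_q$ is injective by residual $q$-finiteness.

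The main obstacle is the middle step: producing an open uniform subgroup of $\widehat{G}_q$ that is \emph{normal} in $\widehat{G}_q$, and pinning down its dimension. There is a fallback that avoids uniform groups if the analytic-group citation is unclear. Take a Mal'cev basis with central series $G=G_1>\dots>G_{h+1}=1$ and $G_i/G_{i+1}\cong\Z$, and consider the subgroups $G^{[q^n]}$ generated by ``$q^{n c_i}$-th powers'' adapted to the series, with suitable weights $c_i$. One would then verify that these are normal and have index $q^{n\sum c_i}$, up to a fixed constant. This approach requires more commutator bookkeeping, so I would present the pro-$q$ argument and cite the standard theorem that every finitely generated pro-$q$ group of finite rank, in particular $\widehat{G}_q$, contains an open characteristic uniform subgroup. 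The oddness of $q$ is exactly what makes $P_i(U)$ behave uniformly: for $q$ odd, powerful means $[U,U]\le U^q$, and the index formula holds.
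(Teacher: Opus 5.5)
Your proposal is correct and follows essentially the same route as the paper. You pass to the pro-$q$ completion $\widehat{G}_q$, which has dimension $h$, and take an open normal uniform subgroup $U$. Your filtration $P_{n+1}(U)=U^{q^n}$ is exactly the paper's $U_n=\langle u^{q^n}:u\in U\rangle=q^nU$, with $[U:P_{n+1}(U)]=q^{hn}$. You then set $\Gamma_n=G\cap P_{n+1}(U)$ and use density of $G$ plus openness to get $[G:\Gamma_n]=[\widehat{G}_q:P_{n+1}(U)]=sq^{hn}$.

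One correction to your closing remark: oddness of $q$ plays no role in this step, since uniform subgroups and the constant-index property exist for $q=2$ as well (with ``powerful'' meaning $[U,U]\le U^4$). The paper restricts to odd $q$ only so that $p=q^h\geq 3$ in the later construction.
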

\begin{proof}
	By \cite[Prop.\ 2(iv) in Chapter "Window 6; Soluble Groups"]{LubotzkySegal2003SubgroupGrowth}, the group $G$ has a central series of finite length $h$ such that every factor is isomorphic to $\Z$.
	(The integer $h$ is an invariant of $G$ called the \textbf{Hirsch length}.)
	Let $q$ be an odd prime.
	By (iii) of the same proposition, $G$ embeds densely into its pro-$q$ completion $\widehat{G}_q$ and by (v) $\widehat{G}_q$ is a pro-$q$ group of (finite) rank and dimension $h$.
	Now, \cite[Cor.\ 8.34]{DixonDusautoyMannSegal2003Analytic} yields that $\widehat{G}_q$ contains an open, normal, uniform pro-$q$ subgroup $U$ of finite index.
	It follows for each $n$ that the subgroup $U_n := \langle u^{q^n} : u\in U\rangle$ is open and normal in $\widehat{G}_q$.
	Moreover, by definition of $\mathrm{dim}(\widehat{G}_q)$ (see \cite[Def.\ 4.7]{DixonDusautoyMannSegal2003Analytic}), there exists a subset $\{u_1,\ldots,u_h\}$ of $U$ which topologically generates $U$.
	In \cite[Chapter 4.3]{DixonDusautoyMannSegal2003Analytic} it is shown that there exists an additive structure on $U$ which turns $U$ into a free $\Z_q$-module with basis $\{u_1,\ldots,u_h\}$.
	(Here, $\Z_q$ denotes the $q$-adic integers.)
	Furthermore, we have $U_n = q^n U$ and the multiplicative cosets of $U_n$ coincide with the additive cosets.
	From this it follows that $[U:U_n] = q^{hn}$ for all $n\in\N$.
	If we now let $\Gamma_n = G\cap U_n$ (where we identify $G$ as a subgroup of $\widehat{G}_q$), then by the second isomorphism theorem $[G: \Gamma_n] = [GU_n :U_n]$.
	Since $U_n$ is open and $G$ is dense, we obtain $GU_n = \widehat{G}_q$, so that with $s = [\widehat{G}_q : U]$ one indeed has
	$$ [G: \Gamma_n] = [\widehat{G}_q: U_n] = s [U:U_n] = sq^{hn}.$$
	Lastly, clearly $\bigcap_{n\in\N} U_n$ contains only the identity of $\widehat{G}_q$, so that $\bigcap_{n\in\N} \Gamma_n = \{1_G\}$, concluding the proof.
\end{proof}

\section{A strengthening of Krieger's Theorem}\label{sec: krieger}
Using techiques similar to \cite{LackaStraszak2018QuasiUniform} we will show in this section Theorem~\ref{thm: realize entropy intro}, which is a strengthening of Krieger's Theorem \cite[Thm.\ 1.1]{Krieger2007Toeplitz} in the case of 0-1-sequences.
Before we can give the construction, we shall first generalize results from \cite{LackaStraszak2018QuasiUniform} regarding path connectedness.

\subsection{Path connectedness}\label{sec: path connectedness}
Fix a decreasing sequence $\mathbf{\Gamma} = (\Gamma_n)_{n\in\N}$ of finite index normal subgroups of $G$ such that $\bigcap_{n\in\N} \Gamma_n = \{1_G\}$.
In \cite[Thm.\ 56]{LackaStraszak2018QuasiUniform} it is shown that $\mathrm{Rel}(\mathbf{\Gamma})$ is path-connected w.r.t.\ $D_{\mathrm{Weyl}}$.
In order to prove this, the authors also contruct a path in $\mathrm{Rel}(\mathbf{\Gamma})$ between the constant 0-array and the constant 1-array, so that every point on that path is a regular Toeplitz array relative to $\mathbf{\Gamma}$ (see \cite[Lem.\ 54]{LackaStraszak2018QuasiUniform}).
We follow the same idea to show the following two statements:
Let $(H,\tau)$ be a metrizable group compactification of $G$.

\begin{prop}\label{prop: path between empty and full}
	There exists a continuous (w.r.t.\ $\ol{D}$) path in $\mathfrak{B}_{\mathrm{prop}}\cap\mathfrak{B}_{\mathrm{gen}}(H)\cap \mathfrak{B}_{\mathrm{reg}}(H)$ between $\emptyset$ and $H$.
\end{prop}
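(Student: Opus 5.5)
The plan is to build the path from closed balls around a fixed point, parametrised by radius, and then repair the radii at which genericity fails. Fix a bi-invariant metric $d_H$ on $H$ and a point $\zeta\in H\setminus\tau(G)$. For $r\geq 0$ put $W_r=\ol{B_r(\zeta)}$, the closure of the \emph{open} ball, so that $W_0=\emptyset$ and $W_r=H$ for $r>\mathrm{diam}(H)$.

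\textbf{Basic properties of $W_r$.} Each $W_r$ is the closure of an open set and hence proper, and $\partial W_r\tm S_r:=\{\xi: d_H(\xi,\zeta)=r\}$. The spheres $S_r$ are pairwise disjoint. So only countably many $r$ have $\nu(S_r)>0$, and only countably many $r$ have $S_r\cap\tau(G)\neq\emptyset$, because $\tau(G)$ is countable. Call the remaining radii \emph{good}; they form a co-countable set $R$. For $r\in R$ the window $W_r$ is proper, generic and regular. For $r<s$ we have $\ol{W_r\symdiff W_s}\tm\{r\leq d_H(\cdot,\zeta)\leq s\}$. Hence $\ol{D}(W_r,W_s)\leq F(s)-F(r^-)$, where $F(r)=\nu(\{d_H(\cdot,\zeta)\leq r\})$, and this tends to $0$ along good radii. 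Thus $r\mapsto W_r$ is $\ol{D}$-continuous on $R$.

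\textbf{Main obstacle.} $R$ is not an interval, and one cannot simply reparametrise. If a continuous family $\{f<c(t)\}$ has level sets $\{f=c\}$ as boundaries, then by density $f(\tau(G))$ is dense in the range of $f$, so some level always meets $\tau(G)$. The path therefore has to leave the pure ``level set'' scheme at countably many places. I would first try a devil's staircase scheme. Choose a Cantor set $K\tm R\cap[0,\mathrm{diam}(H)+1]$ containing $0$ and some radius beyond $\mathrm{diam}(H)$; this is possible since $R$ is co-countable. Let $c:[0,1]\to K$ be a non-decreasing surjection that is constant exactly on the closures of the countably many complementary gaps $(a_j,b_j)$ of $K$, and assign to each gap a parameter interval $I_j$. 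Outside $\bigcup_j I_j$, set $W_t:=W_{c(t)}$; this is well behaved by the previous paragraph. On each $I_j$ we must interpolate between $W_{a_j}$ and $W_{b_j}$ inside the annulus $A_j=\{a_j<d_H(\cdot,\zeta)<b_j\}$. I would do this recursively. Cover $A_j$, up to a null set, by finitely many pieces that are ``good balls'' around centres outside $\tau(G)$ with good radii, intersected with $A_j$. Grow these pieces one after another by the same Cantor-radius scheme. Inside the new, smaller gaps, recurse. At each stage the windows remain finite unions and intersections of good balls and good annuli. Such a window is proper, its boundary lies in finitely many good spheres, and hence it is generic and regular.

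\textbf{Continuity of the limit path.} To obtain continuity, I would arrange that the annuli treated at depth $k$ all have closures of $\nu$-measure at most $2^{-k}$; this is possible since $\nu$ has no atoms ($G$ is infinite) and $F$ has only countably many jumps, none at good radii. Then two parameters that are close differ only inside a single small annulus of some deep level. Consequently $\ol{D}(W_t,W_{t'})\to 0$ as $t'\to t$, uniformly. The delicate points are checking properness at the junction times between recursion levels, and checking that the sets of good radii can be chosen simultaneously at every stage. Both rely only on the countability of $\tau(G)$ and of the set of atoms of the radial distributions.
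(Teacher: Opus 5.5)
Your radial scheme is a different route from the paper's, and it has a genuine gap at the parameters where your recursion never terminates.

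\textbf{Where it fails.} Every region you recurse into is open, so it contains points of $\tau(G)$. Take such a point $\tau(g)$, and let $\zeta_i$ be the centre of the piece in which it gets filled. The distance $d_H(\tau(g),\zeta_i)$ is a bad radius for $\zeta_i$, since that sphere contains $\tau(g)$. So it is not in $K_i$, and $\tau(g)$ lies in a gap region of the next depth. Inductively, $\tau(g)$ lies in a gap region $A^{(k)}$ at every depth $k$. The corresponding nested parameter intervals have a common point $t_g$, at which the construction stays stuck around $\tau(g)$ forever.

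The proposal never defines $W_{t}$ at such parameters. The description ``finite unions and intersections of good balls and annuli'' only covers the finite stages. For any reasonable limit, e.g.\ the closure of everything filled at finite depth, the boundary also contains the residual set $\bigcap_k \ol{A^{(k)}(t_g)}$, which contains $\tau(g)$. Nothing keeps $\tau(g)$ out of $\partial W_{t_g}$.

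Suppose the regions shrink to $\tau(g)$. Then every neighbourhood of $\tau(g)$ contains a whole region $A^{(k)}(t_g)$. Inside it, earlier pieces or inner balls are already filled, while later pieces or outer annuli are still empty. So in the generic run of your construction $\tau(g)\in\partial W_{t_g}$, and genericity fails. This is exactly the density obstruction you identified for level sets, reappearing at the limit points. Countability of $\tau(G)$ does not help, because you produce one bad parameter for every $g$. Your choice of centres outside $\tau(G)$ is precisely what guarantees that every $\tau(g)$ gets trapped in this way.

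\textbf{What is missing, and how the paper handles it.} You need control over which points can arise as limits of a non-terminating recursion.
\begin{itemize}
\item The paper uses refining partitions into open cells with null, $\tau(G)$-free boundaries and diameters tending to $0$.
\item These are enumerated so that every $\tau(g)$ lies in the \emph{first} child cell from some level on (Lemma~\ref{lem: decoding of element from G}).
\item The filling $A(t)$ is greedy and lexicographic.
\end{itemize}
If $t$ is stuck only in first cells from level $N$ on, the measure count forces $\nu(A_N(t))=t$. Then $A(t)=\ol{A_N(t)}$, so the process has in fact terminated at a finite stage. Otherwise $k_n\geq 2$ infinitely often, and the single limit point is not in $\tau(G)$.

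To repair your scheme you would need analogous bookkeeping. For instance: put centres \emph{in} $\tau(G)$, make $0$ an accumulation point of each radius set $K_i$, and ensure every $\tau(g)$ becomes a centre at some finite depth. Then $\tau(g)$ is filled ``first'' in its piece and never trapped in a gap.

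A smaller point: intersections of proper sets need not be proper; two closed balls can meet in a single point. You should work with closures of intersections of open sets instead.
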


\begin{thm}\label{thm: generic windows path connected}
	The spaces $\mathfrak{B}(H)$, $\mathfrak{B}_{\mathrm{gen}}(H)$, $\mathfrak{B}_{\mathrm{reg}}(H)$, $\mathfrak{B}_{\mathrm{closed}}(H)$ and all possible intersections thereof are path-connected w.r.t.\ $\ol{D}$.
\end{thm}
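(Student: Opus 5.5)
The plan is to use Proposition~\ref{prop: path between empty and full} as the basic tool: let $(P_s)_{s\in[0,1]}$ be the $\ol{D}$-continuous path of proper, generic, regular windows with $P_0=\emptyset$ and $P_1=H$. Given two windows $A,B$ in the relevant class, I would first move from $A$ to $\emptyset$ and then from $\emptyset$ to $B$. It therefore suffices to show that any window $A$ in a given class (or intersection of classes) can be joined to $\emptyset$ inside that class; concatenating two such paths then gives path-connectedness.

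For a window $A$ I would set $\gamma_A(s)=A\cap (H\setminus P_s)$ for $s\in[0,1]$, or rather a version adapted to closedness: $\gamma_A(s)=A\setminus \mathrm{int}(P_s)$. Then $\gamma_A(0)=A$ and $\gamma_A(1)=\emptyset$. For continuity, note that $\gamma_A(s)\symdiff\gamma_A(s')\tm \mathrm{int}(P_s)\symdiff\mathrm{int}(P_{s'})\tm P_s\symdiff P_{s'}$ up to boundary points. More precisely, $\ol{\gamma_A(s)\symdiff\gamma_A(s')}\tm \ol{P_s\symdiff P_{s'}}\cup\partial P_s\cup\partial P_{s'}$. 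Since the $P_s$ are regular, this gives $\ol{D}(\gamma_A(s),\gamma_A(s'))\le \ol{D}(P_s,P_{s'})$, which tends to $0$.

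Preservation of the classes is checked as follows:
\begin{itemize}
\item $\mathfrak{B}(H)$ is trivial.
\item Closedness holds because $A\setminus\mathrm{int}(P_s)=A\cap(H\setminus \mathrm{int}P_s)$ is an intersection of closed sets.
\item Regularity holds because $\partial(A\setminus \mathrm{int}P_s)\tm\partial A\cup\partial P_s$, which is null.
\item Genericity holds because the same inclusion avoids $\tau(G)$.
\end{itemize}
Since each property is verified independently by these boundary inclusions, every intersection of the classes is also preserved.

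The main obstacle is making sure the path from Proposition~\ref{prop: path between empty and full} is continuous in the strong sense needed, namely $\nu(\ol{P_s\symdiff P_{s'}})\to0$, and that the boundary inclusion $\partial(X\setminus Y)\tm\partial X\cup\partial Y$ is used correctly. If the interior version causes trouble for closedness at the endpoint $s=0$, I would instead take $A\cap(H\setminus P_s)$ for the non-closed classes and $A\setminus\mathrm{int}(P_s)$ only for the closed classes; both choices give the same $\ol{D}$-estimate.
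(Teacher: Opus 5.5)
Your proof is correct and is essentially the paper's argument. The paper uses the same path $A(t)$ from Proposition~\ref{prop: path between empty and full}, the same cut $W\setminus\mathrm{int}(A(t))$ and the same boundary inclusions. The only difference is that it joins $W$ to $W'$ in one step via $W(t)=(W'\cap A(t))\cup(W\setminus\mathrm{int}(A(t)))$ instead of passing through $\emptyset$. The direct interpolation also keeps $W\tm W(t)\tm W'$ whenever $W\tm W'$, which the paper uses later in Lemmas~\ref{lem: properfication} and \ref{lem: window on path irredundant}. For the statement itself, however, your route through $\emptyset$ is fine.
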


Observe that Theorem~\ref{thm: properties Phi} and Proposition~\ref{prop: relative Toeplitz generic windows} show that the aforementioned \cite[Lem.\ 54, Thm.\ 56]{LackaStraszak2018QuasiUniform} are indeed a special case of Proposition~\ref{prop: path between empty and full} and Theorem~\ref{thm: generic windows path connected} (taking $H= \overleftarrow{G}$).\\
In order to prove these two claims, we need a specific refining sequence of measurable partitions.
The construction is fairly standard, but since we aim for a specific collection of properties of these partitions, we still give the details.
In what follows, we freely identify $g\in G$ with its embedding $\tau(g)\in H$, whenever there is no ambiguity between the two.
We call a subset of the positive real line \textbf{co-countable} if its complement is countable.
\begin{lem}\label{lem: good boundary of ball}
	Let $\xi\in H$.
	\begin{enumerate}
		\item The set $\{\delta>0 : \nu(\partial B_\delta(\xi))=0\}$ is co-countable.
		\item The set $\{\delta>0 : \partial B_\delta(\xi)\cap \tau(G) = \emptyset\}$ is co-countable.
	\end{enumerate}
\end{lem}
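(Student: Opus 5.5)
The plan is to show that the spheres $S_\delta(\xi) := \{\eta\in H : d_H(\xi,\eta)=\delta\}$, $\delta>0$, are pairwise disjoint, and that $\partial B_\delta(\xi)\tm S_\delta(\xi)$. Both parts then reduce to elementary counting arguments.

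For the inclusion, $B_\delta(\xi)=\{\eta : d_H(\xi,\eta)<\delta\}$ is open, so every boundary point lies outside it, i.e.\ has distance at least $\delta$ from $\xi$. It also lies in the closure, which is contained in the closed ball $\{\eta : d_H(\xi,\eta)\le\delta\}$. Hence its distance is exactly $\delta$. Disjointness of the $S_\delta(\xi)$ for distinct $\delta$ is immediate, since each point $\eta$ has exactly one distance $d_H(\xi,\eta)$.

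For (1), observe that for each $m\in\N$ the set $\{\delta>0 : \nu(S_\delta(\xi))>1/m\}$ has fewer than $m$ elements. Otherwise we would obtain $m$ disjoint measurable sets, each of measure greater than $1/m$, contradicting $\nu(H)=1$. Taking the union over $m$, the set $\{\delta>0 : \nu(S_\delta(\xi))>0\}$ is countable. On its complement we have $\nu(\partial B_\delta(\xi))\le\nu(S_\delta(\xi))=0$.

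For (2), note that $\tau(G)$ is countable because $G$ is. Each $g\in G$ lies in exactly one sphere, namely $S_{\delta_g}(\xi)$ with $\delta_g=d_H(\xi,\tau(g))$. Hence $\partial B_\delta(\xi)\cap\tau(G)\neq\emptyset$ forces $\delta\in\{\delta_g : g\in G\}$, which is a countable set. There is no real obstacle in either part. The only point needing care is the inclusion $\partial B_\delta(\xi)\tm S_\delta(\xi)$: the boundary may be strictly smaller than the sphere, but only this inclusion is used.
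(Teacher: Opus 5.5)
Your proof is correct and follows essentially the same route as the paper. Part (2) is the same disjointness-plus-countability argument. For (1), the paper says the bad radii are discontinuity points of the monotone function $r\mapsto\nu(B_r(\xi))$; this is the same fact as your count of disjoint spheres of positive measure, since the jump at $\delta$ is exactly $\nu(S_\delta(\xi))$. Your explicit check that $\partial B_\delta(\xi)\subseteq S_\delta(\xi)$ also usefully justifies the disjointness of the boundaries, which the paper only asserts.
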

\begin{proof}
	(1) Observe that $(0,\infty) \setminus \{\delta>0: \nu(\partial B_\delta(\xi))=0\} \tm \{\delta>0: r\mapsto \nu(B_r(\xi)) \text{ is not continuous at } \delta\}$.
	However, the latter set is countable, since the map $r\mapsto \nu(B_r(\xi))$ is increasing.\\
	(2) Let $\Delta := \{\delta>0 : \partial B_\delta(\xi)\cap \tau(G) \neq \emptyset\}$.
	For each $\delta \in \Delta$ choose $g_\delta \in G$ such that $\tau(g_\delta) \in \partial B_\delta(\xi)$.
	Now note that, whenever $\delta \neq \delta^\prime$, then $\partial B_\delta(\xi) \cap \partial B_{\delta^\prime}(\xi) = \emptyset$.
	Thus, by injectivity of $\tau$, we see that the mapping $\Delta \to G,\: \delta\mapsto g_\delta$ is injective.
	As we assumed $G$ to be countable, this concludes the proof.
\end{proof}

\begin{lem}\label{lem: good partition}
	There exists a sequence $(\mathcal{P}_n)_{n\in\N}$ of finite families $\mathcal{P}_n = \{P_{n,i} : i\in \{1,\ldots,m_n\}\}$ of measurable subsets of $H$, such that
	\begin{enumerate}
		\item $P_{n,i}\cap P_{n,j} = \emptyset$ whenever $i\neq j$ and $\nu\kl \bigcup_{i=1}^{m_n} P_{n,i} \kr =1$.
		In particular, each $\mathcal{P}_n$ is a measurable partition of $H$.
		\item Each set $P_{n,i}$ is open.
		\item $\nu(\partial P_{n,i}) = 0$ for all $n\in \N$, $i\in \{1,\ldots,m_n\}$.
		\item $\partial P_{n,i} \cap \tau(G) = \emptyset$ for all $n$ and $i$.
		\item $\tau(G) \tm \bigcup_{i=1}^{m_n} P_{n,i}$ for all $n$.
		\item $\mathcal{P}_{n+1}$ is a refinement of $\mathcal{P}_n$.
		\item $\max\{\nu(P): P\in \mathcal{P}_n\} \xrightarrow{n\to\infty} 0$.
		\item $\max\{ \mathrm{diam}(P) : P \in \mathcal{P}_n\} \xrightarrow{n\to\infty} 0$.
	\end{enumerate}
\end{lem}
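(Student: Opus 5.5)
The plan is to build the partitions inductively from open balls with good radii, using Lemma~\ref{lem: good boundary of ball}. Since $H$ is compact metrizable with the bi-invariant metric $d_H$, for each $n$ fix a finite set $\{\xi_{n,1},\ldots,\xi_{n,k_n}\}$ that is $2^{-n}$-dense. By Lemma~\ref{lem: good boundary of ball}(1),(2), the set of radii $\delta$ with $\nu(\partial B_\delta(\xi_{n,j}))=0$ and $\partial B_\delta(\xi_{n,j})\cap\tau(G)=\emptyset$ is co-countable, hence nonempty in $(2^{-n},2^{-n+1})$. Choose such radii $\delta_{n,j}$ and set $B_{n,j}=B_{\delta_{n,j}}(\xi_{n,j})$. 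These balls are open and cover $H$.

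Next I disjointify. Let $\mathcal{Q}_n$ consist of the nonempty sets of the form $\bigcap_{j} E_j$, where each $E_j$ is either $B_{n,j}$ or $H\setminus\ol{B_{n,j}}$. Each such set is open, being a finite intersection of open sets. Its boundary lies in $\bigcup_j\partial B_{n,j}$, which is a $\nu$-null set missing $\tau(G)$. The sets are pairwise disjoint. Their union is $H\setminus\bigcup_j\partial B_{n,j}$, so it has full measure and contains $\tau(G)$. This gives (1)--(5) at level $n$. Every nonempty piece $Q$ must choose $E_j=B_{n,j}$ for some $j$, because the balls cover $H$ and $Q$ avoids all the boundaries. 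Hence $\mathrm{diam}(Q)\le 2\delta_{n,j}\le 2^{-n+2}$.

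To get refinement (6), define $\mathcal{P}_n$ as the nonempty sets $P_1\cap\cdots\cap P_n$ with $P_k\in\mathcal{Q}_k$, i.e.\ the common refinement $\mathcal{Q}_1\vee\cdots\vee\mathcal{Q}_n$. Properties (1)--(5) are preserved under finite common refinements. Openness and disjointness are clear. The boundary of an intersection is contained in the union of the boundaries, and the complement of the union is the intersection of the full-measure sets containing $\tau(G)$. Refinement holds by construction, and (8) is inherited from $\mathcal{Q}_n$.

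For (7), I expect the main subtlety, though it is mild. Since $d_H$ is invariant, every ball of radius $r$ has the same measure $f(r)=\nu(B_r(1_H))$. Each $P\in\mathcal{P}_n$ has diameter at most $r_n:=2^{-n+2}$, so it is contained in a ball of radius $r_n$, giving $\nu(P)\le f(r_n)$. Moreover $f(r)\to\nu(\{1_H\})$ as $r\to0$. This limit is $0$ because $H$ is infinite (as $\tau$ is injective and $G$ is infinite) and Haar measure gives singletons measure $0$ in an infinite compact group, since there are infinitely many disjoint translates of equal measure. Hence $\max\{\nu(P):P\in\mathcal{P}_n\}\to0$.
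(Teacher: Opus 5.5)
Your proof is correct and follows essentially the paper's approach: cover $H$ by finitely many open balls whose radii come from Lemma~\ref{lem: good boundary of ball}, split them into disjoint open pieces whose boundaries lie in the union of the ball boundaries, and take successive common refinements to get (6). The differences are cosmetic. The paper uses centers in $\tau(G)$ with a single good radius $\varepsilon_n$, transferring the boundary properties by invariance, and disjointifies sequentially via $B_{\varepsilon_n}(h_{n,i})\setminus\bigcup_{j<i}\ol{P'_{n,j}}$; you instead choose a radius for each center, use the atoms of the generated Boolean algebra, and spell out the argument for (7) that the paper leaves as easily verified.
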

\begin{proof}
	By Lemma~\ref{lem: good boundary of ball}, we can find a decreasing sequence $(\varepsilon_n)_{n\in\N}$ of positive real numbers with $\varepsilon_n \to 0$ and such that for all $n\in\N$ we have $\nu(\partial B_{\varepsilon_n}(1_H)) = 0$ and $\partial B_{\varepsilon_n}(1_H) \cap \tau(G) = \emptyset$.
	Since the measure $\nu$ and our metric on $H$ are both invartiant and $\tau$ is a group homomorphism, this implies $\nu(\partial B_{\varepsilon_n}(\xi)) = 0$ for all $\xi \in H$ as well as $\partial B_{\varepsilon_n}(\tau(h)) \cap \tau(G) = \emptyset$ for all $h\in G$.\\
	Let $n\in \N$ be arbitrary.
	By denseness of $\tau(G)$ in $H$, we can find $h_{n,1},\ldots,h_{n,m_n^\prime} \in G$ such that $\{ B_{\varepsilon_n}(h_{1,i}) : i = 1,\ldots,m_n^\prime\}$ covers $H$.
	We inductively define
	$$ P^\prime_{n,1} = B_{\varepsilon_n}(h_{n,1}) \text{ and } P^\prime_{n,i} = B_{\varepsilon_n}(h_{n,i}) \setminus \bigcup_{j=1}^{i-1} \ol{P^\prime_{n,j}} \text{ for } i>1.$$
	It follows by induction that $\partial P^\prime_{n,i} \tm \bigcup_{j=1}^i \partial B_{\varepsilon_n}(h_{n,j})$.
	Hence,
	$$H\setminus \bigcup_{i=1}^{m_n^\prime} P^\prime_{n,i} \tm \bigcup_{i=1}^{m_n^\prime} \partial P_{n,i}^\prime \tm \bigcup_{i=1}^{m_n^\prime} \partial B_{\varepsilon_n}(h_{n,i}).$$
	This readily yields properties (1), (3), (4) and (5) for the partition $\mathcal{P}^\prime_n:= \{P^\prime_{n,i}: i=1,\ldots,m_n^\prime\}$, while (2) is obvious.
	Now it remains to define $\mathcal{P}_1 = \mathcal{P}^\prime_1$ and $\mathcal{P}_{n+1} = \mathcal{P}_n \vee \mathcal{P}_{n+1}^\prime$.
	The remaining properties (6) - (8) are easily verified.
\end{proof}
\begin{rem}
	If $G$ is residually finite, $H =\overleftarrow{G_*} = \prod_{n\in\N} (D_n\cap \Gamma_{n-1})$ is a $G$-odometer and $\tau_*: G\to H$ is the natural embedding (c.f.\ Section~\ref{sec: odometers}), then
	\begin{align*}
		\mathcal{P}_n &= \{P\tm H: P\text{ is a cylinder set of level } n\} \\
			&= \ml[\xi_1,\ldots,\xi_n] : (\xi_1,\ldots,\xi_n)\in \prod_{k=1}^n (D_n\cap \Gamma_{n-1})\mr
	\end{align*}
	gives us a sequence of partitions that satisfies the properties of Lemma~\ref{lem: good partition}.
	Recall that by definition of the embedding $\tau_*$ we have
	\begin{equation}\label{eq: decoding of element from G odometer}
	\xi\in \tau_*(G) \text{ if and only if } \xi = (\xi_n)_{n\in\N} \text{ is eventually constant } 1_G.
	\end{equation}
	For the situation of a general group compactification $(H,\tau)$ it is now our aim to re-enumerate the partitions given by Lemma~\ref{lem: good partition} so that we have a characterization of the elements in $\tau(G)$ analogous to \eqref{eq: decoding of element from G odometer}.
\end{rem}

Of course, by discarding empty sets if necessary we can assume that all $P\in \mathcal{P}_n$ are non-empty.
Take some arbitrary increasing sequence $(G_n^\prime)_{n\in\N}$ of finite subsets of $G$ such that $1_G\in G_1^\prime$ and  $G = \bigcup_{n\in\N} G_n^\prime$.
By making $\varepsilon_n$ in above proof smaller, we can assume that $\mathcal{P}_n$ separates the points in $\tau(G_n^\prime)$.
We re-enumerate the elements of $\mathcal{P}_n$ as follows: \\
Write $\mathcal{P}_1 = \{P(1),\ldots,P(m_1)\}$, where $P(1)$ is the unique $P\in \mathcal{P}_1$ such that $1_H \in P$.
This means that we can write $\mathcal{P}_1 = \{P(l_1) : l_1 \in Z_1\}$ with $Z_1 = \{1,\ldots,m_1\} \tm \N^1$.
Also choose a set $G_1 = \{g(l_1): l_1\in Z_1\}\tm G$ such that $\tau(g(l_1))\in P(l_1)$ and $G_1^\prime \tm G_1$.
(This can be done since we assumed that $\mathcal{P}_1$ separates the points in $\tau(G_1^\prime)$.)\\
Assume for an induction that we have enumerated $\mathcal{P}_n = \{P(l_1,\ldots,l_n) : (l_1,\ldots,l_n) \in Z_n\}$ with some (finite) $Z_n \tm \N^n$ and chosen $G_n := \{g(l_1,\ldots,l_n) : (l_1,\ldots,l_n)\in Z_n\}\tm G$ such that $\tau(g(l_1,\ldots,l_n))\in P(l_1,\ldots,l_n)$ and $G_n^\prime \tm G_n$.
We see by Lemma~\ref{lem: good partition} (6), that for every $P\in \mathcal{P}_{n+1}$ there exists a unique $(l_1,\ldots,l_n)\in Z_n$ such that $P\tm P(l_1,\ldots,l_n)$.
We enumerate
$$ \{ P\in \mathcal{P}_{n+1} : P\tm P(l_1,\ldots,l_n)\} = \{ P(l_1,\ldots,l_n,l_{n+1}) : l_{n+1} = 1,\ldots, m(l_1,\ldots,l_n)\}$$
so that $P(l_1,\ldots,l_n,1)$ is the unique $P$ with $\tau(g(l_1,\ldots,l_n)) \in P$.
This gives a enumeration/ encoding $\mathcal{P}_{n+1} = \{P(l_1,\ldots,l_{n+1}) : (l_1,\ldots,l_{n+1})\in Z_{n+1}\}$,
where
$$Z_{n+1} = \{(l_1,\ldots,l_n,l_{n+1}) : (l_1,\ldots,l_n)\in Z_n \text{ and } l_{n+1} \in \{1,\ldots,m(l_1,\ldots,l_n)\}\}.$$
Lastly, choose $G_{n+1} := \{g(l_1,\ldots,l_{n+1}): (l_1,\ldots,l_{n+1}) \in Z_{n+1} \}\tm G$ such that
\begin{align}\label{eq: def G_{n+1}}
	&\tau(g(l_1,\ldots,l_{n+1}))\in P(l_1,\ldots,l_{n+1}), \nonumber \\
	&g(l_1,\ldots,l_n,1) = g(l_1,\ldots,l_n) \text{ and} \\
	&G_{n+1}^\prime \tm G_{n+1}. \nonumber
\end{align}
(The last property can be ensured since $\mathcal{P}_{n+1}$ separates the points in $\tau(G_{n+1}^\prime)$.)
This finishes the inductive construction.\\
We see that every $\xi \in H^\prime :=\bigcap_{n\in\N} \bigcup_{P\in \mathcal{P}_n} P$ has a unique encoding $(l_n)_{n\in\N}$, so that for every $n\in\N$ the vector $(l_1,\ldots,l_n)$ is the unique element of $Z_n$ such that $\xi \in P(l_1,\ldots,l_n)$.
(Uniqueness of this encoding follows from Lemma~\ref{lem: good partition} (8)).
Also note that $\tau(G) \tm H^\prime$ by (5).
With the sets $G_n$ defined as above, it follows from \eqref{eq: def G_{n+1}} that $\xi\in\tau(G_n)$ if and only if the encoding $(l_m)_{m\in\N}$ of $\xi$ satisfies $l_m = 1$ for all $m>n$.
Moreover, $G_n\tm G_{n+1}$ and $\bigcup_{n\in\N} G_n = G$.
This yields the desired analogue to \eqref{eq: decoding of element from G odometer}.
\begin{lem}\label{lem: decoding of element from G}
	Let $\xi \in H^\prime$.
	We have $\xi \in \tau(G)$ if and only if the encoding $(l_n)_{n\in\N}$ of $\xi$ is eventually constant 1.
\end{lem}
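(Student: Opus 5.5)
The plan is to prove both directions directly from the inductive construction, namely from the properties recorded in \eqref{eq: def G_{n+1}} and Lemma~\ref{lem: good partition}. The central auxiliary claim, to be established by induction on $n$, is the following: for $\xi\in H^\prime$ with encoding $(l_m)_{m\in\N}$, we have $\xi\in\tau(G_n)$ if and only if $l_m=1$ for all $m>n$. Once this claim is in hand, the lemma follows immediately from $G=\bigcup_n G_n$ together with the fact that $G_n\tm G_{n+1}$.

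For the ``only if'' direction, let $\xi=\tau(g(l_1,\ldots,l_n))$. By construction $\tau(g(l_1,\ldots,l_n))\in P(l_1,\ldots,l_n)$, so the first $n$ entries of the encoding are $l_1,\ldots,l_n$. Next, $P(l_1,\ldots,l_n,1)$ is defined as the unique cell of $\mathcal{P}_{n+1}$ inside $P(l_1,\ldots,l_n)$ that contains $\tau(g(l_1,\ldots,l_n))$, so the $(n+1)$-st entry is $1$. Since $g(l_1,\ldots,l_n,1)=g(l_1,\ldots,l_n)$, iterating this argument shows that every later entry is also $1$. Uniqueness of the encoding is used here to identify the encoding of $\xi$ with this sequence.

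For the ``if'' direction, suppose $l_m=1$ for all $m>n$, and set $\eta=\tau(g(l_1,\ldots,l_n))$. By the previous paragraph, $\eta$ has the same encoding as $\xi$, so $\xi,\eta\in P(l_1,\ldots,l_m)$ for every $m$. By Lemma~\ref{lem: good partition}(8) the diameters of these cells tend to $0$, hence $d(\xi,\eta)=0$ and $\xi=\eta\in\tau(G_n)$.

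It remains to check that $G_n\tm G_{n+1}$ and $\bigcup_n G_n=G$. The inclusion holds because $g(l_1,\ldots,l_n)=g(l_1,\ldots,l_n,1)\in G_{n+1}$. The union is all of $G$ because $G_n^\prime\tm G_n$ and the sets $G_n^\prime$ exhaust $G$. Combining these: any $g\in G$ lies in some $G_n$, so $\tau(g)$ has encoding eventually equal to $1$; conversely, an encoding that is eventually $1$ from some index $n$ on gives $\xi\in\tau(G_n)\tm\tau(G)$.

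The only delicate point is ensuring that the encoding of $\tau(g)$ is well defined at all, which requires $\tau(G)\tm H^\prime$. This follows from Lemma~\ref{lem: good partition}(5).
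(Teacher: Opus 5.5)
Your proposal is correct and follows the paper's own argument. The paper likewise reads off from \eqref{eq: def G_{n+1}} that $\xi\in\tau(G_n)$ if and only if $l_m=1$ for all $m>n$, and concludes using $G_n\subseteq G_{n+1}$ and $\bigcup_n G_n=G$; you just supply the details it leaves implicit, namely disjointness together with Lemma~\ref{lem: good partition}(5) to identify the encoding of $\tau(g(l_1,\ldots,l_n))$, and Lemma~\ref{lem: good partition}(8) for the converse direction.
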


Now we can turn to the proofs of Proposition~\ref{prop: path between empty and full} and Theorem~\ref{thm: generic windows path connected}, which follow along the same lines as \cite{LackaStraszak2018QuasiUniform}.
\begin{proof}[Proof of Proposition~\ref{prop: path between empty and full}]
	Let $t\in [0,1]$.
 	Define
 	\[k_1 :=k_1(t) = \max \ml i : \nu \kl \bigcup_{j=1}^{i-1} P_j \kr \leq t \mr \text{ and } A_1(t) =\bigcup_{j=1}^{k_1-1} P_j.\]
	Now assume inductively we have defined $k_1,\ldots,k_n$ and $A_n(t)$.
	Let $k_{n+1} := k_{n+1}(t) = \max\ml i: \nu \kl A_n(t) \cup \bigcup_{j=1}^{i-1} P(k_1,\ldots,k_n,j)\kr\leq t\mr$ as well as
	$$A_{n+1}(t) = A_n(t) \cup\bigcup_{j=1}^{k_{n+1}-1} P(k_1,\ldots,k_n,j).$$
	We let $A(t) = \ol{\bigcup_{n\in\N} A_n(t)}$ and claim that $t\mapsto A(t)$ is the desired path.\\
	First, it is clear that $A_n(0) =\emptyset$ for all $n\in\N$, so that $A(0) = \emptyset$.
	On the other hand, we have $A_1(1) = \bigcup_{P\in \mathcal{P}_1} P$ and $A_n(1) = \emptyset$ for $n>1$.
	From Lemma~\ref{lem: good partition} (5), one obtains $A(1) = \ol{A_1(1)} = H$.
	In order to see that $A(t)$ is proper, generic and regular, note that each $A_n(t)$ is an open set, so that $\bigcup_{n\in\N} A_n(t) \tm \mathrm{int}(A(t))$.
	This yields properness.
	Furthermore,
	\begin{equation}\label{eq: boundary of A(t)}
	 \partial A(t) \tm \bigcup_{n\in\N} \bigcup_{P\in \mathcal{P}_n} \partial P \cup \bigcap_{n\in\N} P(k_1,\ldots,k_n).
	\end{equation}
	Regularity thus follows from Lemma~\ref{lem: good partition} (3) and (7).
	For genericity, first assume that $\nu(A_n(t)) = t$ for some $n\in\N$.
	In this case, we have $A(t) = \ol{A_n(t)}$ and $A_n(t)$ is a (finite) union of elements from $\mathcal{P}_m$ with $m\leq n$.
	Thus, genericity follows from Lemma~\ref{lem: good partition} (4).\\
	In the other case, where $\nu(A_n(t)) < t$ for all $n\in\N$, then by (7) we must have $k_n\geq 2$ for infinitely many $n$.
	Therefore, by Lemma~\ref{lem: decoding of element from G}, the (singleton) $\bigcap_{n\in\N} P(k_1,\ldots,k_n)$ does not intersect $\tau(G)$.
	Together with \eqref{eq: boundary of A(t)} and (4), this implies genericity in this case.\\
	It remains to show that $t\mapsto A(t)$ is continuous.
	Let $\varepsilon>0$ and choose $N\in\N$ such that $\max\{\nu(P) : P\in \mathcal{P}_N\} < \varepsilon$.
	Let $\delta = \min\{\nu(P) : P\in \mathcal{P}_N\} >0$.
	Note that this also yields $\min\{ \nu(P) : P\in \mathcal{P}_n\} \geq \delta$ for all $n\leq N$, since $\mathcal{P}_N$ refines $\mathcal{P}_n$.
	This implies that whenever $\abs{t-s} < \delta$, then $k_n := k_n(t) = k_n(s)$ for all $n\leq N$.
	From that, we readily obtain $A(t)\symdiff A(s) \tm \ol{P(k_1,\ldots,k_N)}$, hence also $\ol{A(t)\symdiff A(s)}\tm \ol{P(k_1,\ldots,k_N)}$.
	We therefore conclude, whenever $\abs{t-s} < \delta$, then
	$$ \ol{D}(A(t),A(s)) \leq \nu(\ol{P(k_1,\ldots,k_N)}) = \nu(P(k_1,\ldots,k_N)) < \varepsilon. \qedhere$$
\end{proof}

\begin{proof}[Proof of Theorem~\ref{thm: generic windows path connected}]
	Let $W, W^\prime \in \mathfrak{B}(H)$.
	We let
	$$W(t) = (W^\prime\cap A(t)) \cup (W \setminus \mathrm{int}(A(t))) = (W^\prime\cap A(t))\cup(W \setminus A(t)) \cup (W\cap \partial A(t)).$$
	Observe that
	$$ W(t) \symdiff W(s) \tm (A(t) \symdiff A(s)) \cup (\partial A(t) \symdiff \partial A(s)) \tm (A(t) \symdiff A(s)) \cup \partial A(t) \cup \partial A(s).$$
	Hence, $\ol{D}(W(t),W(s)) \leq \ol{D}(A(t),A(s)) + \nu(\ol{\partial A(t)\cup \partial A(s)}) = \ol{D}(A(t),A(s))$, since $A(t),A(s)$ are regular.
	Together with $W(0) = W$ and $W(1) = W^\prime$, we obtain that $t\mapsto W(t)$ is indeed a continuous path from $W$ to $W^\prime$.
	From the definition, it is also clear that each $W(t)$ is a closed set, whenever both $W$ and $W^\prime$ are closed.
	Furthermore,
	$$ \partial W(t) \tm \partial W^\prime \cup \partial A(t) \cup \partial W,$$
	so that we have $W(t) \in \mathfrak{B}_{\alpha}(H)$ whenever $W,W^\prime \in \mathfrak{B}_{\alpha}(H)$ for $\alpha\in\{\mathrm{gen},\mathrm{closed},\mathrm{reg}\}$.
	This concludes the proof.
\end{proof}

\subsection{The construction}
%For this section, we let $G$ be any countably infinite, residually finite, amenable group and let $(H,\tau)$ be a compact, metrizable group compactification of $G$ with Haar measure $\nu$.
Note that with Theorem~\ref{thm: properties Phi} and Theorem~\ref{thm: entropy cts wrt Weyl} we obtain
\begin{cor}\label{cor: entropy continuous}
	The mapping $\mathfrak{B}(H) \to [0,\infty),\: W \mapsto \htop(\ol{O_G(x_W)},G)$ is continuous w.r.t.\ $\ol{D}$.
\end{cor}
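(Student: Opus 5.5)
The corollary is the composition of two continuous maps, so the plan is to check that each factor is continuous in the right sense and then compose. Write the map as $W \mapsto x_W = \Phi(W) \mapsto \htop(\ol{O_G(x_W)},G)$. For the first factor we use Theorem~\ref{thm: properties Phi}(1) in its Weyl version, stated at the end of that theorem: $\Phi:\mathfrak{B}(H)\to\{0,1\}^G$ is continuous from $\ol{D}$ to $D_{\mathrm{Weyl}}$. For the second factor we use Theorem~\ref{thm: entropy cts wrt Weyl}: the map $x\mapsto \htop(\ol{O_G(x)},G)$ is continuous from $D_{\mathrm{Weyl}}$ to the usual topology on $[0,\infty)$.

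Continuity for pseudometrics is not automatically preserved under composition with quotients, so we argue directly with the $\varepsilon$-$\delta$ definition. Fix $W\in\mathfrak{B}(H)$ and $\varepsilon>0$. By Theorem~\ref{thm: entropy cts wrt Weyl}, applied at the point $x_W$, there is $\eta>0$ such that $D_{\mathrm{Weyl}}(x_W,y)<\eta$ implies $\abs{\htop(\ol{O_G(x_W)},G)-\htop(\ol{O_G(y)},G)}<\varepsilon$. By the proof of Theorem~\ref{thm: properties Phi}(1), $\ol{D}(W,W')<\eta/2$ implies $D_{\mathcal{F}}(x_W,x_{W'})<\eta$. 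The bound obtained there is uniform in the F\o lner sequence $\mathcal{F}$, since the uniform ergodic theorem limit equals $\int\varphi\,d\nu$ for every $\mathcal{F}$. Taking the supremum over $\mathcal{F}$ therefore gives $D_{\mathrm{Weyl}}(x_W,x_{W'})\le\eta$ rather than $<\eta$. To absorb this, start from $\ol{D}(W,W')<\eta/4$, which yields $D_{\mathrm{Weyl}}(x_W,x_{W'})\le\eta/2<\eta$. Combining the two steps proves continuity at $W$.

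The only point that needs care is this strict-versus-weak inequality when passing from $D_{\mathcal{F}}$ to the supremum defining $D_{\mathrm{Weyl}}$, and shrinking $\delta$ handles it. No other obstacle is expected.
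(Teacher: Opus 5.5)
Your proposal is correct and follows the paper's own argument: the corollary is the composition of the Weyl version of Theorem~\ref{thm: properties Phi}(1) with the continuity of entropy with respect to $D_{\mathrm{Weyl}}$ from Theorem~\ref{thm: entropy cts wrt Weyl}. Your strict-versus-weak adjustment is harmless but unnecessary: the proof of Theorem~\ref{thm: properties Phi}(1) already gives $D_{\mathcal{F}}(x_W,x_{W'})\le\int\varphi\,d\nu$, where $\int\varphi\,d\nu<\eta$ does not depend on $\mathcal{F}$, so the supremum over $\mathcal{F}$ is itself strictly below $\eta$.
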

In \cite{LackaStraszak2018QuasiUniform}, the authors gave an alternative proof of Krieger's Theorem using the aforementioned continuity of entropy together with certain results on path connectedness.
We will employ the same strategy.
In particular, similar to the proof of \cite[Thm.\ 57]{LackaStraszak2018QuasiUniform}, we only aim to construct windows $W\in \mathcal{W}(H)$ such that $x_W$ has entropy arbitrarily close to $\log 2$.
The realisation of any intermediate value will follow (with some extra care to ensure properness and irredundancy) from Corollary~\ref{cor: entropy continuous} and path-connectedness result Theorem~\ref{thm: generic windows path connected}.\\
Let $\gamma\in [0,1)$ be arbitrary.
We shall now give the construction for a window $W_\gamma \in \mathcal{W}(H)$ such that $\htop(\ol{O_G(x_{W_\gamma})},G) \geq \gamma \cdot\log 2$.
For this we need to have a large set of "good radii"
\[ \mathcal{E} = \{\varepsilon >0: \nu(\partial B_{\varepsilon}(\tau(g))) = 0 \text{ and } \partial B_{\varepsilon}(\tau(g)) \cap \tau(G) = \emptyset \text{ for all } g\in G\},\]
which is a co-countable subset of the positive real line by Lemma~\ref{lem: good boundary of ball} and invariance of the metric and measure on $H$.
In particular, this yields $\inf \mathcal{E} = 0$.
The following is an elementary -- yet important -- observation.
\begin{lem}\label{lem: strictly decreasing balls}
	For any $\varepsilon \in \mathcal{E}$ there exists $\delta \in \mathcal{E}$ with $\delta < \varepsilon$ such that $B_{\delta}(\xi)$ is a proper subset of $B_{\varepsilon}(\xi)$ for any $\xi\in H$.
\end{lem}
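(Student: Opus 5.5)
By invariance of the metric $d_H$, we have $B_r(\xi) = B_r(1_H)\,\xi$ for all $\xi\in H$ and $r>0$, since $d_H(\eta,\xi)<r$ iff $d_H(\eta\xi^{-1},1_H)<r$. Hence it suffices to find $\delta\in\mathcal{E}$, $\delta<\varepsilon$, with $B_\delta(1_H)\subsetneq B_\varepsilon(1_H)$; right translation by $\xi$ then transports both the inclusion and its strictness to every $\xi\in H$. The inclusion $B_\delta(1_H)\subseteq B_\varepsilon(1_H)$ is automatic for $\delta<\varepsilon$, so the whole task is to produce strictness.

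I will argue via the measure. The function $r\mapsto \nu(B_r(1_H))$ is nondecreasing on $(0,\infty)$. It is also strictly positive for every $r>0$, because nonempty open sets have positive Haar measure. Finally, it tends to $\nu(\{1_H\})$ as $r\to 0$, and $\nu(\{1_H\})=0$: $H$ is infinite (it contains the injective image $\tau(G)$ of an infinite group) and compact, so Haar measure has no atoms, as all points have equal mass and a normalized measure cannot have infinitely many atoms of equal positive mass. Therefore there is $r_0\in(0,\varepsilon)$ with $\nu(B_{r_0}(1_H)) < \nu(B_\varepsilon(1_H))$. Since $\mathcal{E}$ is co-countable (Lemma~\ref{lem: good boundary of ball} together with invariance), $\mathcal{E}\cap(0,r_0]$ is nonempty. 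Pick $\delta$ in it; monotonicity gives $\nu(B_\delta(1_H))\le\nu(B_{r_0}(1_H))<\nu(B_\varepsilon(1_H))$, so $B_\delta(1_H)\neq B_\varepsilon(1_H)$.

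There is no real obstacle here. The only point needing care is the non-atomicity of $\nu$, which is what forces the balls to shrink strictly; the rest is invariance of the metric plus the co-countability of $\mathcal{E}$. An alternative that avoids measure would be to take any $\eta\in B_\varepsilon(1_H)\setminus\{1_H\}$, which exists because $H$ has no isolated points, being an infinite compact group, and then choose $\delta\in\mathcal{E}$ with $\delta\le d_H(\eta,1_H)$. That is equally short.
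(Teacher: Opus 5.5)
Your proof is correct, but your main argument takes a different route from the paper's. The paper argues by contradiction and purely topologically. If $B_\delta(\xi)=B_\varepsilon(\xi)$ held for all $\delta\in\mathcal{E}$ with $\delta<\varepsilon$, then, since $\inf\mathcal{E}=0$, intersecting these balls would give $B_\varepsilon(\xi)=\{\xi\}$. So $\xi$ would be isolated, hence $H$ would be discrete by homogeneity, contradicting that $H$ is compact and infinite. Your closing alternative (pick $\eta\in B_\varepsilon(1_H)\setminus\{1_H\}$ and a good radius $\delta\le d_H(\eta,1_H)$) is this same argument run directly rather than by contradiction. Your main argument instead replaces ``$H$ has no isolated points'' by ``Haar measure has no atoms'' together with continuity of $\nu$ from above. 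Both facts come from the same source, namely that $H$ is infinite and homogeneous. The topological route is more elementary and needs no measure theory. Yours gives the stronger conclusion $\nu(B_\delta(\xi))<\nu(B_\varepsilon(\xi))$, i.e.\ the decrease is strict in measure and not just as sets. Either version only needs $\inf\mathcal{E}=0$, not the full co-countability of $\mathcal{E}$ that you invoke.
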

\begin{proof}
	Firstly observe that by invariance of the metric it suffices to prove the assertion with some fixed $\xi\in H$.
	Suppose for a contradiction that there exists $\varepsilon \in \mathcal{E}$ such that for all $\delta< \varepsilon$ we have $B_{\delta}(\xi)= B_{\varepsilon}(\xi)$.
	Since $\inf \mathcal{E} = 0$, this implies $\{\xi\} = \bigcap_{\delta< \varepsilon} B_{\delta}(\xi) = B_{\varepsilon}(\xi)$.
	Hence, $\xi$ is an isolated point of $H$.
	However, as $H$ is a topological group, every point in $H$ is isolated so that $H$ is discrete.
	This contradicts the fact that $H$ is compact and infinite.
\end{proof}
Now we can begin with the inductive construction of the window $W_\gamma$.
In what follows we will alternatively write $e$ for the neutral element of $G$ in order to alliviate some of the overloading on the symbol "$1$".
Since $G$ is residually finite, we fix some decreasing sequence $(\Gamma_n)_{n\in\N}$ of finite index, normal subgroups of $G$ such that $\bigcap_{n\in\N} \Gamma_n = \{e\}$ and let $(F_n)_{n\in\N}$ be a sequence of fundamental domains given by Lemma~\ref{lem: good fundamental domains}.
\begin{construction}
	For step 0, we let $F_{k_0} = \{e\}$, $g_0^* = e$ and $\varepsilon_0$ such that $B_{\varepsilon_0}(g_0^*) = H$.\\
	Now for step 1, choose $k_1 \in \N$ large enough such that
	\begin{itemize}
		\item $r_1 := \floor{(1-\gamma)\# F_{k_1}} \geq 4.$
		\item We can find $G_1 \tm F_{k_1}$ such that $e\in G_1$ and $\# G_1 = r_1-1$.
	\end{itemize}
	Define a $G_1$-word $\mathfrak{W}_1$ to be $1$ on $e$ and $0$ everywhere else.
	Now choose $\varepsilon_1 \in \mathcal{E}$ such that the sets $B_{\varepsilon_1}(f)$ ($f\in F_{k_1}$) are pairwise disjoint and there exists furthermore some $g_1^* \in G$ such that $B_{\varepsilon_1}(g_1^*)$ is disjoint from $B_{\varepsilon_1}(F_{k_1})$.
	We now let
	\begin{align*}
		U_1 &= \bigcup \{B_{\varepsilon_1}(\tau(f)) : f\in G_1 \text{ and } \mathfrak{W}_1(f) = 1\}\\
		V_1 &= \bigcup \{B_{\varepsilon_1}(\tau(f)) : f\in G_1 \text{ and } \mathfrak{W}_1(f) = 0\}.
	\end{align*}
	We need to define some more subsets of $H$ and $G$:
	Let $H_1 = U_1 \cup V_1$ and $H_1^* = H_1 \cup B_{\varepsilon_1}(\tau(g_1^*))$.
	We also define $T_1 = \tau^{-1}(H_1)$, $T_1^* = \tau^{-1}(H_1^*)$, $\widehat{T}_1 = \tau^{-1}(B_{\varepsilon_1}(1_H))$ and $S_1 = F_{k_1} \setminus T_1$.
	Observe that by $\varepsilon_1\in \mathcal{E}$, we have $\nu(\partial H_1) = 0$ and $\partial H_1 \cap \tau(G) =\emptyset$ and analogously for $H_1^*$.
%	\begin{rem}
%		Comparing this construction to the one in the preceding Subsection~\ref{sec: krieger/ac}, the sets $X_n$ and $Y_n$ serve the same purpose in both.
%		The set $X_n$ (resp.\ $Y_n$) denotes the points that are put into $\mathrm{int}(W)$ (resp.\ $H\setminus W$) after the $n$-th step.
%		Thus, $T_n$ describes the points in $G$, on which the sequence $x_W$ is defined during the $n$-th step.
%	\end{rem}
	\begin{namedtheorem}[Claim]
		%\hspace{1em}
		\begin{enumerate}
			\item $\#(F_{k_1} h\cap T_1) \leq \#(F_{k_1} h \cap T_1^*)\leq r_1$ for all $h\in G$.
			\item $\# S_1 \geq \gamma \cdot\#F_{k_1}$.
			\item $B_{\varepsilon_0}(g_0^*) \setminus H_1 = B_{\varepsilon_0}(g_0^*) \setminus (B_{\varepsilon_1}(g_0^*) \cup V_1)$.
			\item $F_{k_0} \tm T_1$.
			\item $B_{\varepsilon_1}(g_1^*)\cap H_1 = \emptyset$.
			\item $S_1 \cdot \widehat{T}_1 \tm G \setminus T_1^*$.
		\end{enumerate}
	\end{namedtheorem}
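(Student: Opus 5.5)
All six assertions follow directly from how $G_1$, $\varepsilon_1$ and $g_1^*$ were chosen. The only tools needed are the bi-invariance of the metric $d_H$ and the injectivity of $\tau$. I will take them in the order (4), (5), (3), (2), (6), (1), since (1) is the only one that needs care.

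For (4): $F_{k_0}=\{e\}$ and $e\in G_1$ with $\mathfrak{W}_1(e)=1$, so $\tau(e)=1_H\in B_{\varepsilon_1}(1_H)\tm U_1\tm H_1$. Hence $e\in T_1$.

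For (5): $H_1$ is a union of balls $B_{\varepsilon_1}(\tau(f))$ with $f\in G_1\tm F_{k_1}$, so $H_1\tm B_{\varepsilon_1}(F_{k_1})$. This set is disjoint from $B_{\varepsilon_1}(g_1^*)$ by the choice of $g_1^*$.

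For (3): $B_{\varepsilon_0}(g_0^*)=H$ and $g_0^*=e$. Since $\mathfrak{W}_1$ is $1$ only at $e$, we have $U_1=B_{\varepsilon_1}(1_H)=B_{\varepsilon_1}(g_0^*)$, so $H_1=B_{\varepsilon_1}(g_0^*)\cup V_1$ and (3) is immediate.

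For (2): I first show $F_{k_1}\cap T_1=G_1$. If $f\in F_{k_1}$, then $\tau(f)$ lies in its own ball $B_{\varepsilon_1}(\tau(f))$. That ball is disjoint from $B_{\varepsilon_1}(\tau(f'))$ for every $f'\neq f$ in $F_{k_1}$. Hence $\tau(f)\in H_1$ if and only if $f\in G_1$. Therefore
\[\#S_1=\#F_{k_1}-(r_1-1)\geq \#F_{k_1}-(1-\gamma)\#F_{k_1}=\gamma\,\#F_{k_1},\]
using $r_1-1<r_1\leq(1-\gamma)\#F_{k_1}$.

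For (6): let $s\in S_1$ and $t\in\widehat{T}_1$. By left invariance, $d_H(\tau(st),\tau(s))=d_H(\tau(t),1_H)<\varepsilon_1$, so $\tau(st)\in B_{\varepsilon_1}(\tau(s))$. Since $s\in F_{k_1}\setminus G_1$, this ball is disjoint from every $B_{\varepsilon_1}(\tau(f))$ with $f\in G_1$, by the pairwise disjointness of the balls around $F_{k_1}$. It is also disjoint from $B_{\varepsilon_1}(g_1^*)$, by the choice of $g_1^*$. Thus $\tau(st)\notin H_1^*$, i.e.\ $st\in G\setminus T_1^*$.

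For (1): the first inequality is trivial because $T_1\tm T_1^*$. For the second, $H_1^*$ is a union of $r_1$ balls of radius $\varepsilon_1$: $r_1-1$ centred at $G_1$ and one at $g_1^*$. So it suffices to show that each $\varepsilon_1$-ball contains at most one point of $\tau(F_{k_1}h)$. By right invariance, $d_H(\tau(fh),\tau(f'h))=d_H(\tau(f),\tau(f'))$. Hence it is enough that distinct points of $\tau(F_{k_1})$ are at distance at least $2\varepsilon_1$. Then two such points cannot lie in a common $\varepsilon_1$-ball, by the triangle inequality.

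This is the main obstacle. Disjointness of the balls $B_{\varepsilon_1}(\tau(f))$ only gives pairwise distances $\geq\varepsilon_1$, which is not enough. I would therefore read the choice of $\varepsilon_1$ as also requiring this $2\varepsilon_1$-separation. This costs nothing: $F_{k_1}$ is finite, $\tau$ is injective and $\inf\mathcal{E}=0$, so $\varepsilon_1\in\mathcal{E}$ can be shrunk to satisfy it while keeping all other requirements. In the ultrametric odometer case disjointness alone already suffices.
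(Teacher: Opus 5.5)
Your proposal is correct and follows the paper's proof for (3)--(6). For (2) you compute $F_{k_1}\cap T_1=G_1$ directly rather than taking $h=e$ in (1), which is equally valid.

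The step you single out as the main obstacle in (1) is not an obstacle, and you need not strengthen the choice of $\varepsilon_1$. Your assertion that disjointness of the balls only yields pairwise distances $\geq\varepsilon_1$ extracts too little from it. By symmetry of $d_H$, two points $a,b$ lie in a common ball $B_{\varepsilon_1}(c)$ if and only if $c\in B_{\varepsilon_1}(a)\cap B_{\varepsilon_1}(b)$. So disjointness of $B_{\varepsilon_1}(a)$ and $B_{\varepsilon_1}(b)$ says precisely that no $\varepsilon_1$-ball contains both points.

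This is the paper's argument. Suppose $\tau(fh),\tau(f'h)\in B_{\varepsilon_1}(c)$, where $c=g_1^*$ or $c=\tau(g)$ with $g\in G_1$. Right invariance then gives $c\,\tau(h)^{-1}\in B_{\varepsilon_1}(\tau(f))\cap B_{\varepsilon_1}(\tau(f'))$, and pairwise disjointness forces $f=f'$. You already used disjointness in exactly this form in (6), where a non-centre point of $B_{\varepsilon_1}(\tau(s))$ is excluded from the other balls.

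Your extra $2\varepsilon_1$-separation can be arranged, but it restricts the construction. It would also have to be re-imposed on every $\varepsilon_{n+1}$ to obtain (1-(n+1)). With the observation above, the claim holds for any $\varepsilon_1$ that is admissible in the paper.
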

	%need manual proof because otherwise the \qed is a triangle
	\textit{Proof.}
		(1) The first inequality is trivial.
		For the second inequality, let $f\in F_{k_1}$ such that $fh \in T_1^*$.
		Then either $\tau(fh)\in H_1$ or $\tau(fh)\in B_{\varepsilon_1}(g_1^*)$.
		In the latter case we see that for any $f^\prime\in F_{k_1}$ with $\tau(f^\prime h)\in B_{\varepsilon_1}(g_1^*)$ we have $g_1^* \in B_{\varepsilon_1}(\tau(fh))\cap B_{\varepsilon_1}(\tau(f^\prime h))$, so that $g_1^*\tau(h)^{-1} \in B_{\varepsilon_1}(f)\cap B_{\varepsilon_1}(f^\prime)$.
		The choice of $\varepsilon_1$ yields $f^\prime = f$.
		Similarly, in the first case we have $\tau(fh) \in B_{\varepsilon_1}(g)$ for some $g\in G_1$ and if $\tau(f^\prime h) \in B_{\varepsilon_1}(g)$, then $f^\prime = f$.
		This yields $\#(F_{k_1}h \cap T_1^*) \leq \# G_1 + 1 = r_1$ as desired.\\
		(2) Taking $h = e$ in (1) gives us $\# S_1 \geq \#F_{k_1}- r_1 \geq \#F_{k_1}-(1-\gamma)\# F_{k_1} = \gamma\cdot\#F_{k_1}$.\\
		(3) is clear from the convention on $\varepsilon_0$ and $g_0^*$ as well as the fact that $U_1 = B_{\varepsilon_1}(\tau(e)) = B_{\varepsilon_1}(\tau(g_0^*))$.\\
		(4) is trivial and (5) is a consequence of the choice of $\varepsilon_1$.\\
		(6) If $s \in S_1$ and $t\in \widehat{T}_1$, then due to the choice of $\varepsilon_1$
		$$\tau(st)\in B_{\varepsilon_1}(s) \tm H\setminus \bigcup_{g\in (F_{k_1}\setminus\{s\}) \cup \{g_1^*\}} B_{\varepsilon_1}(g) \tm H\setminus H_1^*,$$
		so that $st \in G\setminus T_1^*$, as desired.
		\hfill
		$\square$\\
	Assume inductvely that we have constructed $k_n,r_n\in\N$, $\varepsilon_n\in \mathcal{E}$, $g_n^* \in G\setminus F_{k_n}$, as well as subsets $U_n$, $V_n$, $H_n$, $H_n^*$ of $H$ and subsets $T_n$, $T_n^*$, $\widehat{T}_n$ and $S_n$ of $G$ such that the following properties hold:
	\begin{enumerate}[leftmargin=5em]
		\item[(1-(n))] $\#(F_{k_n} h \cap T_n)\leq \#(F_{k_n}h \cap T_n^*) \leq r_n$ for all $h\in G$.
		\item[(2-(n))] $\# S_n \geq \gamma\cdot \#F_{k_n}$.
		\item[(3-(n))] $B_{\varepsilon_{n-1}}(g_{n-1}^*) \setminus \bigcup_{l=1}^n H_l = B_{\varepsilon_{n-1}}(g_{n-1}^*) \setminus (B_{\varepsilon_n}(g_{n-1}^*)\cup V_n)$.
		\item[(4-(n))] $F_{k_{n-1}}\tm \bigcup_{l=1}^n T_n$.
		\item[(5-(n))] $B_{\varepsilon_n}(g_n^*)\cap \bigcup_{l=1}^n H_l = \emptyset$.
		\item[(6-(n))] $S_n \cdot \widehat{T}_n \tm G \setminus \kl \bigcup_{l=1}^{n-1} T_l \cup T_n^*\kr$.
	\end{enumerate}
	Pick pairwise disjoint subsets $S_{n,j}\tm G$ ($j=1,\ldots,2^{\# S_n}$) such that each $S_{n,j}$ is of the form $S_{n,j}= S_n \cdot t_{n,j}$ with $t_{n,j}\in \widehat{T}_n$ and $S_{n,1} = S_n$.
	Observe that by (5-(n)) and (6-(n)) the union $\bigcup_{j=1}^{2^{\# S_n}} S_{n,j} \uplus \{g_n^*\}\uplus \bigcup_{l=1}^n T_l$ is indeed disjoint.
	We claim that one can find $g_{n,1},\ldots,g_{n,m_n} \in G \setminus \kl \bigcup_{j=1}^{2^{\# S_n}} S_{n,j} \uplus \{g_n^*\}\uplus \bigcup_{l=1}^n T_l\kr$ such that
	\begin{equation}\label{eq: covering property}
		B_{\varepsilon_{n-1}}(g_{n-1}^*) \setminus \bigcup_{l=1}^n H_l \tm \bigcup_{i=1}^{m_n} B_{\varepsilon_n}(g_{n,i}).
	\end{equation}
	To see that this claim is true, note that $B_{\varepsilon_{n-1}}(g_{n-1}^*) \setminus \bigcup_{l=1}^n H_l$ is totally bounded (as a subset of the totally bounded space $H$).
	Hence can cover it with balls of the form $B_{\varepsilon_n/2}(\xi_{n,i})$ (with $\xi_{n,i} \in B_{\varepsilon_{n-1}}(g_{n-1}^*) \setminus \bigcup_{l=1}^n H_l$ and $i = 1,\ldots,m_n$).
	Moreover, any set of the form
	\begin{equation}\label{eq: intersec difference and ball}
	\kl B_{\varepsilon_{n-1}}(g_{n-1}^*) \setminus \ol{\bigcup_{l=1}^n H_l}\kr \cap B_{\varepsilon_n/2}(\xi_{n,i}) \quad (i=1,\ldots,m_n)
	\end{equation}
	is non-empty and open, hence contains infinitely many $\tau(g)$ with $g\in G$.
	In particular, we can choose $g_{n,i} \notin \bigcup_{j=1}^{2^{\# S_n}} S_{n,j} \uplus \{g_n^*\}$ such that $\tau(g_{n,i})$ lies in the set described in \eqref{eq: intersec difference and ball}.
	Note that then also $g_{n,i}\notin \bigcup_{l=1}^n T_l$ because $\tau(g_{n,i})\notin \ol{\bigcup_{l=1}^n H_l}$.
	Those $g_{n,i}$ ($i=1,\ldots,m_n$) satisfy \eqref{eq: covering property}.\\
	Now pick $F_{k_{n+1}}$ such that
	\begin{itemize}
		\item $r_{n+1} := \lfloor (1-\gamma) \# F_{k_{n+1}} / 2^{n+1}\rfloor \geq \# S_n \cdot 2^{\# S_n} + m_n + 2$
		\item $G_{n+1} := \bigcup_{j=1}^{2^{\# S_n}} S_{n,j} \cup \{g_{n,i}: i=1,\ldots,m_n\} \cup \{g_n^*\} \tm F_{k_{n+1}}$
	\end{itemize}
	Note that we have $G_{n+1} \tm F_{k_{n+1}}\setminus \bigcup_{l=1}^n T_l$.
	Pick $\varepsilon_{n+1}\in \mathcal{E}$ with $\varepsilon_{n+1}<\varepsilon_n/2$ such that
	\begin{align}\label{eq: properties varepsilon_{n+1}}
	&\text{All balls } B_{\varepsilon_{n+1}}(f) \:(f\in F_{k_{n+1}}) \text{ are pairwise disjoint,} \nonumber\\
	&B_{\varepsilon_{n+1}}(g)\cap \bigcup_{l=1}^n H_l =\emptyset \text{ for all } g\in F_{k_{n+1}}\setminus \bigcup_{l=1}^n T_l \text{ and}\\
	&B_{\varepsilon_{n+1}}(s) \cap B_{\varepsilon_n}(g_n^*) = \emptyset \text{ for all } s\in \bigcup_{j=1}^{2^{\# S_n}} S_{n,j} \nonumber
	\end{align}
	Note that the second property can be guaranteed since $\partial \kl \bigcup_{l=1}^n H_l \kr \cap \tau(G) = \emptyset$ (as $\bigcup_{l=1}^n H_l$ is a finite union of $\varepsilon_l$-balls ($l=1,\ldots,n$)).
	Similarly, the third property can be realized since $\partial B_{\varepsilon_n}(g_n^*) \cap \tau(G) = \emptyset$.\\
	Lemma~\ref{lem: strictly decreasing balls} yields that by making $\varepsilon_{n+1}$ smaller, if necessary, we can also assume that there exists $g_{n+1}^*\in B_{\varepsilon_n}(g_n^*)\cap \tau(G)$ such that
	\begin{equation}\label{eq: properties g_{n+1}^* (1)}
		B_{\varepsilon_{n+1}}(g_{n+1}^*) \tm B_{\varepsilon_n}(g_n^*) \setminus  B_{\varepsilon_{n+1}}(G_{n+1}) = B_{\varepsilon_n}(g_n^*) \setminus \kl B_{\varepsilon_{n+1}}(g_n^*) \cup \bigcup_i B_{\varepsilon_{n+1}}(g_{n,i})\kr
	\end{equation}
	as well as
	\begin{equation}\label{eq: properties g_{n+1}^* (2)}
		B_{\varepsilon_{n+1}}(g_{n+1}^*) \cap B_{\varepsilon_{n+1}}(F_{k_{n+1}}) = \emptyset,
	\end{equation}
	so that in particular $g_{n+1}^*\notin F_{k_{n+1}}$.\\
	Fix a $G_{n+1}$-word $\mathfrak{W}_{n+1}$ which is 0 on all the $g_{n,i}$, which is 1 on $g_n^*$ and such that every $S_n$-word appears on the $S_{n,j}$.
	Let
	\begin{align*}
		U_{n+1} &= \bigcup \{B_{\varepsilon_{n+1}}(f) : f\in G_{n+1} \text{ and } \mathfrak{W}_{n+1}(f)=1\}\\
		V_{n+1} &= \bigcup \{B_{\varepsilon_{n+1}}(f) : f\in G_{n+1} \text{ and } \mathfrak{W}_{n+1}(f)=0\}.
	\end{align*}
	We define $H_{n+1} = U_{n+1}\cup V_{n+1}$, $H_{n+1}^* = H_{n+1}\cup B_{\varepsilon_{n+1}}(g_{n+1}^*)$, $T_{n+1}=\tau^{-1}(H_{n+1})$, $T_{n+1}^* = \tau^{-1}(H_{n+1}^*)$ and $\widehat{T}_{n+1}=\tau^{-1}(B_{\varepsilon_{n+1}}(1_H))$ analogously to the base case.
	Furthermore, let $S_{n+1} = F_{k_{n+1}} \setminus \bigcup_{k=1}^{n+1} T_k$.
	For the inductive construction it remains to prove (1-(n+1)) to (6-(n+1)):\\
	(1-(n+1)) Note that $\# (G_{n+1}\cup \{g_{n+1}^*\}) = r_{n+1}$.
	The claim now follows analogously to the base case, using \eqref{eq: properties varepsilon_{n+1}} (i.e.\ disjointness of the $\varepsilon_{n+1}$-balls).\\
	(2-(n+1)) Observe that
	\begin{align*}
		\# S_{n+1} &= \# F_{k_{n+1}} - \sum_{l=1}^{n+1} \# (F_{k_{n+1}} \cap T_l)
		= \# F_{k_{n+1}} - \sum_{l=1}^{n+1} \sum_{h\in F_{k_{n+1}}\cap \Gamma_{k_l}} \# (F_{k_l}h\cap T_l)\\
		\overset{\text{(1-(l))}}&{\geq} \#F_{k_{n+1}} - \sum_{l=1}^{n+1} \frac{\# F_{k_{n+1}}}{\# F_{k_l}}\cdot r_l
		\geq \# F_{k_{n+1}} - \sum_{l=1}^{n+1} (1-\gamma) \frac{\# F_{k_{n+1}}}{2^l}\\
		&\geq \gamma\cdot \# F_{k_{n+1}}.
	\end{align*}
	(3-(n+1)) From (5-(n)) it follows that
	$$B_{\varepsilon_n}(g_n^*) \setminus \bigcup_{l=1}^{n+1}H_l = B_{\varepsilon_n}(g_n^*) \setminus H_{n+1}.$$
	Moreover, by \eqref{eq: properties varepsilon_{n+1}} we have $B_{\varepsilon_{n+1}}(f) \cap B_{\varepsilon_n}(g_n^*) = \emptyset$ for all $f\in \bigcup_{j}S_{n,j}$.
	Therefore,
	\begin{align*}
	B_{\varepsilon_n}(g_n^*) \setminus H_{n+1} &=B_{\varepsilon_n}(g_n^*) \setminus \kl B_{\varepsilon_{n+1}}(g_n^*) \cup \bigcup_{i=1}^{m_n} B_{\varepsilon_{n+1}}(g_{n,i}) \kr\\
	&=B_{\varepsilon_n}(g_n^*) \setminus \kl B_{\varepsilon_{n+1}}(g_n^*) \cup V_{n+1} \kr.
	\end{align*}
	since we defined $\mathfrak{W}_{n+1}(g_{n,i})=0$ for all $i$.
	(4-(n+1)) This is easy because
	$$ F_{k_n} = S_n \uplus (F_{k_n}\setminus S_n) \tm S_{n,1} \cup \bigcup_{l=1}^n T_l \tm \bigcup_{l=1}^{n+1} T_l.$$
	(5-(n+1)) Note that by (5-(n)) and $B_{\varepsilon_{n+1}}(g_{n+1}^*) \tm B_{\varepsilon_n}(g_n^*)$ we have $B_{\varepsilon_{n+1}}(g_{n+1}^*)\tm G \setminus \bigcup_{l=1}^n H_l$.
	Furthermore, $B_{\varepsilon_{n+1}}(g_{n+1}^*) \cap H_{n+1}\tm B_{\varepsilon_{n+1}}(g_{n+1}^*) \cap B_{\varepsilon_{n+1}}(F_{k_{n+1}}) =\emptyset$ now shows (5-(n+1)).\\
	(6-(n+1))
	Let $s \in S_{n+1}$ and $t \in \widehat{T}_{n+1}= \tau^{-1}(B_{\varepsilon_{n+1}}(1_H))$.
	We see that $\tau(st) = \tau(s) \tau(t) \in B_{\varepsilon_{n+1}}(\tau(s))$.
	Now the first part of \eqref{eq: properties varepsilon_{n+1}} together with \eqref{eq: properties g_{n+1}^* (2)} yield $\tau(st)\in H\setminus H_{n+1}^*$ whereas the second property of \eqref{eq: properties varepsilon_{n+1}} gives us $\tau(st)\in H\setminus \bigcup_{l=1}^n H_l$, showing (6-(n+1)).\\
	This settles the inductive construction.
\end{construction}
We thus obtain sets $U := \bigcup_{n\in\N} U_n$ and $V := \bigcup_{n\in\N} V_n$.
As usual, we let $W_\gamma = \ol{U}$ which defines a proper window.
Moreover, we have for every $n\in\N$ by (4-(n+1)) that $F_{k_n}\tm \bigcup_{l=1}^{n+1} T_l$, so that $\tau(F_{k_n}) \tm \bigcup_{l=1}^{n+1} (U_n\cup V_n) \tm U\cup V$.
The fact that $G = \bigcup_{n\in\N} F_{k_n}$ yields $\tau(G) \tm U\cup V \tm H\setminus \partial W_\gamma$, hence genericity of $W_\gamma$.\\
Before taking care of irredundancy, let us show that indeed $\htop(\ol{O_G(x_{W_\gamma})},G) \geq \gamma\cdot \log 2$.
For this, observe that by construction (more precisely definition of the sets $U_{n+1}$ and $V_{n+1}$) the word $\mathfrak{W}_{n+1}$ occurs in the array $x_{W_{\gamma}}$.
This implies that all $S_n$-words appear in $x_{W_{\gamma}}$, i.e.\ $\# \mathcal{B}_{S_n}(\ol{O_G(x_{W_{\gamma}})}) = 2^{\# S_n}$.
Hence, since $S_n\tm F_{k_n}$
	\[\# \mathcal{B}_{F_{k_n}} := \# \mathcal{B}_{F_{k_n}}(\ol{O_G(x_{W_{\gamma}})}) \geq 2^{\# S_n}.\]
Therefore,
\begin{equation}\label{eq: entropy estimate}
	\htop(\ol{O_G(x_{W_\gamma})},G) = \lim_{n\to\infty} \frac{\log \# \mathcal{B}_{F_{k_n}}}{\# F_{k_n}} \geq \lim_{n\to\infty}  \frac{\log 2^{\# S_n}}{\# F_{k_n}} \overset{(2-(n))}{\geq} \gamma\cdot \log 2.
\end{equation}
Recall that our goal is to find a path $[0,1]\to \mathcal{W}(H),\: t\mapsto W(t)$ such that any possible entropy between $0$ and $\gamma\cdot\log 2$ is attained along this path.
We have already constructed the endpoint of this path, namely $W_\gamma$.
It thus remains to find an appropriate starting point $W_0$.
It turns out that letting $W_0 = \ol{\bigcup_{n\in\N} B_{\varepsilon_n}(g_{n-1}^*)}$ does the job.
\begin{rem}
	Note that in order to prove Theorem~\ref{thm: realize entropy intro}, it does not suffice to pick an arbitrary window $W_0$ that admits zero entropy, because Theorem~\ref{thm: generic windows path connected} does not guarantee that the elements on the path remain proper and irredundant.
	One can compare this with \cite{LackaStraszak2018QuasiUniform}, where the authors do exactly that, by (implicitly) choosing the starting point as some arbitrary Toeplitz array which admits zero entropy.
	This is why their result (just like the original result by Krieger) only gives Toeplitz arrays \textbf{relative} to a specific sequence $\mathbf{\Gamma}$ and not with $\mathbf{\Gamma}$ as its period structure.
\end{rem}
\begin{lem}
	$W_0$ is proper, generic and regular.
	In particular, $\htop(\ol{O_G(x_{W_0})},G) = 0$.
\end{lem}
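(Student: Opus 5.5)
The plan is to verify the three window properties directly from the construction, using mainly (5-(n)), \eqref{eq: properties g_{n+1}^* (1)}, $\varepsilon_n\in\mathcal{E}$ and $\varepsilon_{n+1}<\varepsilon_n/2$. Zero entropy will then follow from a general "regular model sets have zero entropy" argument.

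\emph{Properness and the shape of $\partial W_0$.} Put $O=\bigcup_{n\in\N}B_{\varepsilon_n}(g_{n-1}^*)$, so $W_0=\ol{O}$ with $O$ open; hence $O\tm\mathrm{int}(W_0)$ and $W_0=\ol{\mathrm{int}(W_0)}$, i.e.\ $W_0$ is proper. For the boundary, I would show that
\[
\partial W_0\tm\bigcup_{n\in\N}\partial B_{\varepsilon_n}(g_{n-1}^*)\cup\{\xi^*\},
\]
where $\xi^*$ is the unique point of $\bigcap_{n}\ol{B_{\varepsilon_n}(g_n^*)}$. By \eqref{eq: properties g_{n+1}^* (1)} the balls $B_{\varepsilon_{n+1}}(g_n^*)\tm B_{\varepsilon_n}(g_n^*)$ and $B_{\varepsilon_{n+1}}(g_{n+1}^*)\tm B_{\varepsilon_n}(g_n^*)$ are nested, and their radii tend to $0$ because $\varepsilon_{n+1}<\varepsilon_n/2$. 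Now take a point of $\ol O\setminus O$ that does not lie on any sphere $\partial B_{\varepsilon_n}(g_{n-1}^*)$. It is a limit of points in balls of indices tending to infinity, and all such balls with index $\ge N$ sit inside $B_{\varepsilon_{N-1}}(g_{N-1}^*)$. Therefore the point must be $\xi^*$.

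\emph{Regularity and genericity.} Regularity holds because $\nu(\partial B_{\varepsilon_n}(\cdot))=0$ for $\varepsilon_n\in\mathcal{E}$, and because $\nu(\{\xi^*\})=0$, Haar measure on an infinite compact group being non-atomic. Genericity: the spheres avoid $\tau(G)$ again since $\varepsilon_n\in\mathcal{E}$. For $\xi^*$, recall from the discussion after the construction that $\tau(G)\tm U\cup V=\bigcup_l H_l$. If $\xi^*=\tau(g)$, then $\tau(g)\in H_l$ for some $l$. On the other hand $\xi^*\in\ol{B_{\varepsilon_{n+1}}(g_{n+1}^*)}\tm B_{\varepsilon_n}(g_n^*)$, after shrinking one index using the strict nesting. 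For $n\ge l$ this contradicts (5-(n)). This is the step that needs the most care, namely closures of balls versus open balls, but $\varepsilon_{n+1}<\varepsilon_n/2$ together with \eqref{eq: properties g_{n+1}^* (1)} should give $\ol{B_{\varepsilon_{n+2}}(g_{n+2}^*)}\tm B_{\varepsilon_n}(g_n^*)$.

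\emph{Zero entropy.} This is the main obstacle, since $W_0$ need not be irredundant and Theorem~\ref{thm: torus parametrisation} requires irredundancy. I would pass to the quotient $H'=H/K$ with $K=\{\xi\in H: W_0\xi=W_0\}$, a closed subgroup. Here $W_0$ descends to a proper, regular and irredundant window $W_0'$ in the compactification $(H',\tau')$, with $x_{W_0'}=x_{W_0}$. By Theorem~\ref{thm: torus parametrisation} we get a factor map $\beta:\ol{O_G(x_{W_0})}\to H'$. Regularity forces $\beta^{-1}\{\xi\}$ to be a singleton for $\nu'$-a.e.\ $\xi$, namely for all $\xi$ whose orbit $\tau(G)\xi$ misses $\partial W_0'$, a set of full measure by countability of $G$. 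Consequently every invariant measure on the subshift is the unique lift of Haar measure on $H'$, and is measure-theoretically isomorphic to the rotation on $H'$, which has zero entropy. The variational principle then gives $\htop=0$.

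As a fallback, one could instead approximate $W_0$ in $\ol{D}$ by the finite unions $\ol{\bigcup_{n\le N}B_{\varepsilon_n}(g_{n-1}^*)}$ and use Corollary~\ref{cor: entropy continuous}. However, this still requires the zero-entropy statement for those finite unions, so the quotient argument above is the one I would carry out.
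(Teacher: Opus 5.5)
Your treatment of properness, the inclusion $\partial W_0\tm\bigcup_n\partial B_{\varepsilon_n}(g_{n-1}^*)\cup\{\xi^*\}$, regularity and genericity is the paper's argument. One step needs repair. The inclusion $\ol{B_{\varepsilon_{n+2}}(g_{n+2}^*)}\tm B_{\varepsilon_n}(g_n^*)$ does not follow from nesting and $\varepsilon_{n+1}<\varepsilon_n/2$. Nested open balls may share boundary points, for instance subintervals of a circle with a common endpoint, so $\xi^*$ need not lie in the open balls. You do not need that inclusion. You have $\xi^*\in\ol{B_{\varepsilon_n}(g_n^*)}$, and if $\xi^*\in\tau(G)$ then $\xi^*\notin\partial B_{\varepsilon_n}(g_n^*)$ because $\varepsilon_n\in\mathcal{E}$. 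Hence $\xi^*\in B_{\varepsilon_n}(g_n^*)$, which contradicts (5-(n)) once $n$ exceeds the index $l$ with $\xi^*\in H_l$.

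The genuine gap is the quotient in your zero-entropy step; the paper gives no separate argument there. In this section $(H,\tau)$ is an arbitrary metric compactification of a residually finite amenable group and may be non-abelian. So the right stabilizer $K$ of $W_0$ need not be normal, and then $H/K$ is only a homogeneous space. Moreover $\pi\circ\tau$ is not injective as soon as $K\cap\tau(G)\neq\{1_H\}$. In either case $(H/K,\pi\circ\tau)$ is not a metric group compactification, so Theorem~\ref{thm: torus parametrisation} cannot be invoked on it.

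The detour is unnecessary, because $W_0$ is irredundant. Since $\mathfrak{W}_n(g_{n-1}^*)=1$, you have $B_{\varepsilon_n}(g_{n-1}^*)\tm U_n$, so $W_0\tm W_\gamma$. Then Lemma~\ref{lem: window on path irredundant} applies to $W=W_0$. Its proof uses only the construction, namely (3-(n+1)), \eqref{eq: covering property} and $\mathfrak{W}_{n+2}(g_{n+1,i})=0$, and not the present lemma, so there is no circularity.

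So apply Theorem~\ref{thm: torus parametrisation} to $W_0$ in $H$ itself. The rest of your argument then goes through: singleton fibres over the $\nu$-full set of $\xi$ with $\tau(G)\xi\cap\partial W_0=\emptyset$, every invariant measure isomorphic to the rotation on $(H,\nu)$, and the variational principle.

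Alternatively, you can avoid irredundancy altogether by working with the closed $G$-invariant set $R=\{(y,\xi): x_{\mathrm{int}(W_0)\xi^{-1}}\le y\le x_{W_0\xi^{-1}}\}\tm\ol{O_G(x_{W_0})}\times H$. It maps onto the subshift and has singleton fibres over $\nu$-a.e.\ $\xi$, so invariant measures lift to copies of $(H,\nu)$.
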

\begin{proof}
	First, note that by $B_{\varepsilon_{n+1}}(g_{n+1}^*) \tm B_{\varepsilon_n}(g_n^*)$ (c.f. \eqref{eq: properties g_{n+1}^* (1)}) and $\varepsilon_n \xrightarrow{n\to\infty} 0$, the sequence $(g_n^*)_{n\in\N}$ is a Cauchy sequence in $H$, hence has a limit $\xi^*$.
	We claim the following:
	\begin{equation}\label{eq: boundary of W_0}
		\partial W_0 \tm \bigcup_{n\in \N}\partial B_{\varepsilon_n}(g_{n-1}^*) \cup \{\xi^*\}.
	\end{equation}
	To prove the claim let $\eta \in \partial W_0$ but assume $\eta\notin \bigcup_{n\in\N} \partial B_{\varepsilon_n}(g_{n-1}^*)$.
	If $\eta \in B_{\varepsilon_n}(g_{n-1}^*)$ for some $n$, then $\eta \in \mathrm{int}(W_0)$, a contradiction.
	This shows that $\delta_n := \inf\{ d(\eta,\zeta): \zeta\in B_{\varepsilon_n}(g_{n-1}^*)\} > 0$ for all $n\in\N$.
	Since $\eta \in W_0 = \ol{\bigcup_{n\in\N} B_{\varepsilon_n}(g_{n-1}^*)}$, we clearly see that
	\[\liminf_{n\to\infty} \delta_n = \inf\{\delta_n: n\in\N\} = 0.\]
	Therefore, $\liminf_{n\to\infty} d(\eta,g_{n-1}^*) \leq \liminf_{n\to\infty} (\varepsilon_n+\delta_n) = 0$, so that $\eta = \xi^*$ as desired.
	Now, \eqref{eq: boundary of W_0} immediately implies regularity since $\varepsilon_n\in \mathcal{E}$ for all $n\in\N$.
	For genericity, we observe that it suffices to show that $\xi^*\notin \tau(G)$.
	Clearly, $\xi^*\in \bigcap_{n\in\N} B_{\varepsilon_n}(g_n^*)$.
	If we assume that $\xi^* = \tau(g^*)$ for some $g^* \in G$, then the observations right after the construction imply $\xi^* \in U_n \cup V_n = H_n$ for some $n\in\N$.
	This however contradicts (5-(n)), showing genericity.
	Since properness is clear, this concludes the proof.
\end{proof}

Furthermore, observe that since we chose the word $\mathfrak{W}_n$ to be 1 on $g_{n-1}^*$, we have $B_{\varepsilon_n}(g_{n-1}^*)\tm U_n$ for every $n\in\N$.
Therefore, $W_0 \tm W_\gamma$.
Now let $t\mapsto W(t)$ be the path from $W_0$ to $W_\gamma$ defined as in the proof of Theorem~\ref{thm: generic windows path connected}, that is
$$ W(t) = (W_\gamma \cap A(t)) \cup (W_0\setminus \mathrm{int}(A(t))$$
where $A(t)$ is defined in the proof of Proposition~\ref{prop: path between empty and full}.
By Theorem~\ref{thm: generic windows path connected}, we have $W(t) \in \mathfrak{B}_{\mathrm{gen}}(H)\cap \mathfrak{B}_{\mathrm{closed}}(H)$.
Moreover, clearly $W_0\tm W(t)\tm W_\gamma$ for all $t\in [0,1]$.
We now show that we can produce the same array $x_{W(t)}$ with a proper, generic window $W^*(t)$ which still satisfies $W_0\tm W^*(t)\tm W_\gamma$.

\begin{lem}\label{lem: properfication}
	For any $t\in[0,1]$ there exists $W^*(t)\tm H$ which is proper, generic and which satisfies $W_0\tm W^*(t)\tm W_\gamma$ as well as $x_{W^*(t)} = x_{W(t)}$.
\end{lem}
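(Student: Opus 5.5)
The natural candidate is the regularisation $W^*(t) := \ol{\mathrm{int}(W(t))}$. Since $W(t)$ is closed (Theorem~\ref{thm: generic windows path connected}), $W^*(t)\tm W(t)$ is closed. Moreover, $\mathrm{int}(W^*(t))\mt \mathrm{int}(W(t))$, hence $\ol{\mathrm{int}(W^*(t))}\mt W^*(t)$, which makes $W^*(t)$ proper. The inclusion $W^*(t)\tm W(t)\tm W_\gamma$ is immediate. For the lower bound, $W_0\tm W(t)$ gives $\mathrm{int}(W_0)\tm\mathrm{int}(W(t))$. Since $W_0$ is proper, $W_0=\ol{\mathrm{int}(W_0)}\tm \ol{\mathrm{int}(W(t))} = W^*(t)$.

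Next I show that $x_{W^*(t)} = x_{W(t)}$. Let $g\in G$. If $\tau(g)\notin W(t)$, then $\tau(g)\notin W^*(t)$. If $\tau(g)\in W(t)$, then genericity of $W(t)$ gives $\tau(g)\notin\partial W(t)$, so $\tau(g)\in\mathrm{int}(W(t))\tm W^*(t)$. Hence both arrays agree at every $g$.

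It remains to show that $W^*(t)$ is generic, which I expect to be the only real point. Here $\partial W^*(t) = W^*(t)\setminus \mathrm{int}(W^*(t))$. Suppose $\tau(g)\in\partial W^*(t)$. Then $\tau(g)\in W^*(t)\tm W(t)$, and by genericity of $W(t)$ we get $\tau(g)\in\mathrm{int}(W(t))$. But $\mathrm{int}(W(t))$ is an open subset of $W^*(t)$, so $\tau(g)\in\mathrm{int}(W^*(t))$, which contradicts $\tau(g)\in\partial W^*(t)$. Equivalently, one can argue via the general inclusion $\partial \ol{\mathrm{int}(A)}\tm\partial A$ for any $A$, together with $\partial W(t)\cap\tau(G)=\emptyset$.

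None of these steps looks hard. The only care needed is to use closedness of $W(t)$ (so that $W^*(t)\tm W(t)$) and properness of $W_0$ (for $W_0\tm W^*(t)$). Both are guaranteed by Theorem~\ref{thm: generic windows path connected}, since $W_0$ and $W_\gamma$ are closed and generic, and by the preceding lemma on $W_0$.
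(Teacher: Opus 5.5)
Your proposal is correct and follows essentially the same route as the paper. Both take the regularisation $W^*(t)=\ol{\mathrm{int}(W(t))}$, use closedness of $W(t)$ for $W^*(t)\tm W(t)\tm W_\gamma$, use properness of $W_0$ for $W_0\tm W^*(t)$, and obtain genericity via $\partial W^*(t)\tm\partial W(t)$. Your pointwise check of $x_{W^*(t)}=x_{W(t)}$, which needs only genericity of $W(t)$, is a slightly more explicit version of the paper's closing remark.
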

\begin{proof}
	We define $W^*(t) = \ol{\mathrm{int}(W(t))}$.
	Since $W(t)$ is closed, we have $W^*(t)\tm W(t) \tm W_\gamma$.
	This also implies $\mathrm{int}(W^*(t))\tm \mathrm{int}(W(t)))$.
	Conversely, clearly $\mathrm{int}(W(t))\tm \mathrm{int}(W^*(t))$ by definition, so that $\mathrm{int}(W^*(t))= \mathrm{int}(W(t))$.
	In particular, $W^*(t)$ is proper.
	Moreover
	$$\partial W^*(t) = W^*(t) \setminus \mathrm{int}(W^*(t)) \tm W(t)\setminus \mathrm{int}(W(t)) = \partial W(t),$$
	so that $W^*(t)$ is generic.
	Note that $W_0 \tm W(t)$ which implies $W_0 = \ol{\mathrm{int}(W_0)}\tm W^*(t)$, so that in particular $W_0\tm W^*(t) \tm W_\gamma$.
	The fact that $x_{W^*(t)} = x_{W(t)}$ follows readily from genericty of both windows $W^*(t)$ and $W(t)$ together with the fact that $\mathrm{int}(W^*(t)) =\mathrm{int}(W(t))$.
\end{proof}

\begin{lem}\label{lem: window on path irredundant}
	Let $W\tm H$ be a window with $W_0\tm W\tm W_\gamma$.
	Then, $W$ is irredundant.
\end{lem}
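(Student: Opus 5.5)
The plan is to argue by contradiction. Suppose $\xi\in H$ with $W\xi=W$ and $\xi\neq 1_H$, and put $\delta:=d_H(\xi,1_H)>0$. Since the metric is bi-invariant, right multiplication by $\xi$ moves every point by exactly $\delta$. We will play the sandwich $W_0\tm W\tm W_\gamma$ off against the nested structure near the point $\xi^*=\lim g_n^*$.

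First I would record two facts about the sandwich.
\begin{enumerate}
\item[(a)] $V\cap W_\gamma=\emptyset$. Every $V_n$ is open, and by the disjointness built into \eqref{eq: properties varepsilon_{n+1}} and (5-(n)) it is disjoint from $U$, hence from $\ol{U}=W_\gamma$. Consequently $W\cap V=\emptyset$.
\item[(b)] $B_{\varepsilon_n}(g_{n-1}^*)\tm U_n\tm W_0\tm W$ for all $n$.
\end{enumerate}
Since $W\xi=W$, the same holds with $W$ replaced by $W\xi^{-1}$. Hence $B_{\varepsilon_n}(g_{n-1}^*)\xi\tm W$, and in particular $B_{\varepsilon_n}(g_{n-1}^*)\xi\cap V=\emptyset$ for every $n$.

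Second, I would prove a structural lemma describing $H$ inside $O_n:=B_{\varepsilon_{n-1}}(g_{n-1}^*)$. Combining (3-(n)), the covering property \eqref{eq: covering property} (whose balls $B_{\varepsilon_{n+1}}(g_{n,i})$ are declared $0$ by $\mathfrak{W}_{n+1}$, so they lie in $V_{n+1}$) and \eqref{eq: properties g_{n+1}^* (1)}, every point of $O_n$ falls into one of three pieces:
\begin{enumerate}
\item[(i)] the "$1$-ball" $B_{\varepsilon_n}(g_{n-1}^*)$;
\item[(ii)] $V_n\cup V_{n+1}$;
\item[(iii)] the smaller region $O_{n+1}=B_{\varepsilon_n}(g_n^*)$, up to the null, $\tau(G)$-free boundaries of the finitely many balls involved.
\end{enumerate}
Iterating, $O_n\setminus V$ is contained in $\bigcup_{m\geq n}\ol{B_{\varepsilon_m}(g_{m-1}^*)}\cup\{\xi^*\}$. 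So $W\cap O_n$ is, up to boundaries, just the chain of shrinking $1$-balls converging to $\xi^*$.

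Third, I would derive the contradiction. Choose $n$ so large that $\varepsilon_{n-1}<\delta/4$; this uses $\varepsilon_{n+1}<\varepsilon_n/2$. Look at the translated balls $B_{\varepsilon_m}(g_{m-1}^*)\xi$ for $m\ge n$. They lie in $W$, they avoid $V$, and they accumulate at $\xi^*\xi$, which satisfies $d(\xi^*\xi,\xi^*)=\delta$. Now apply the structural lemma at the point $\xi^*\xi$ rather than at $\xi^*$, using the symmetric relation $W=W\xi^{-1}$ to swap the roles of $\xi^*$ and $\xi^*\xi$. This should force either $\xi^*\xi$ to be another "accumulation point of $1$-balls surrounded by $V$", or the ball $B_{\varepsilon_n}(g_{n-1}^*)\xi$, of radius $\varepsilon_n$, to meet some $V_l$.

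For the first alternative, note that the covering balls $B_{\varepsilon_{n+1}}(g_{n,i})$ fill all of $O_n$ outside the $1$-ball and $O_{n+1}$. So every open ball of radius comparable to $\varepsilon_n$ that lies inside $O_{n-1}$, but is not inside $B_{\varepsilon_n}(g_{n-1}^*)\cup O_{n+1}$, meets $V$. Choosing $n$ with $\delta$ between suitable scales should place the translated ball in that situation. A measure count is the fallback: compare $\nu(W\cap O_n)\le 2\nu(B_{\varepsilon_n}(1_H))$ with the measure forced by the translated balls.

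The main obstacle is this third step. The structural lemma controls $W$ only near $\xi^*$, while the translate $\xi^*\xi$ may land in a region such as $\mathrm{int}(W_0)$ or some $U_l$ where $W$ is not constrained. I would first try to exploit the symmetry $\xi\leftrightarrow\xi^{-1}$ together with the fact that $\xi^*$ is the unique point of $H$ at which $W$ has density tending to $0$ while being approached by interior $1$-balls. Because $\eta\mapsto\eta\xi$ is a measure-preserving isometry mapping $W$ to itself, it must preserve this density profile. I would therefore show that no other point has that profile, using the covering property at every scale, and conclude $\xi^*\xi=\xi^*$, i.e.\ $\xi=1_H$.
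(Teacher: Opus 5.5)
There is a genuine gap. Your ingredients (a), (b) and the observation $B_{\varepsilon_n}(g_{n-1}^*)\xi\subseteq W\subseteq H\setminus V$ are the right ones, but the argument built on them does not work.

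\textbf{The structural lemma conflates two radii.} The covering property \eqref{eq: covering property} covers $B_{\varepsilon_{n-1}}(g_{n-1}^*)\setminus\bigcup_{l\le n}H_l$ by the balls $B_{\varepsilon_n}(g_{n,i})$. However, $\mathfrak{W}_{n+1}(g_{n,i})=0$ only puts the much smaller balls $B_{\varepsilon_{n+1}}(g_{n,i})$ into $V_{n+1}$, where $\varepsilon_{n+1}<\varepsilon_n/2$. So piece (ii) does not cover what it must.

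\textbf{The iterated conclusion is false in general.} The inclusion $O_n\setminus V\subseteq\bigcup_{m\ge n}\overline{B_{\varepsilon_m}(g_{m-1}^*)}\cup\{\xi^*\}$ can fail. Nothing in the construction prevents $\tau(s)$, for some $s\in S_n$, from lying deep inside $O_n$. In that case the all-ones word on a translate $S_{n,j}=S_nt_{n,j}$ gives a ball of $U_{n+1}\subseteq W_\gamma$ centred in $O_n$. By (6-($n$)) and the choice of $\varepsilon_{n+1}$, this ball is disjoint from $H_n\cup B_{\varepsilon_n}(g_n^*)$, and hence from your whole chain. This freely assigned material is exactly where the entropy of $x_{W_\gamma}$ comes from.

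\textbf{Your third step is not carried out.} It rests on this incorrect picture. The claim that $\xi^*$ is the only point with a certain density profile, for an arbitrary $W$ with $W_0\subseteq W\subseteq W_\gamma$, is asserted but not proved. Also, taking $n$ with $\varepsilon_{n-1}<\delta/4$ is the wrong scale: it pushes the translate out of the region the construction controls, which is exactly the obstacle you hit.

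\textbf{The missing idea is to work at the scale of $\delta=d_H(\xi,1_H)$ itself, with matching radii.}
\begin{itemize}
\item Since $B_{\varepsilon_0}(g_0^*)=H$ and $\varepsilon_n\downarrow 0$, there is $n\ge 0$ with $\varepsilon_{n+1}\le\delta<\varepsilon_n$. The paper phrases this as an induction proving $d_H(\xi,1_H)<\varepsilon_n$ for all $n$.
\item Your (b) gives $B_{\varepsilon_{n+1}}(\tau(g_n^*)\xi)=B_{\varepsilon_{n+1}}(g_n^*)\xi\subseteq W\subseteq H\setminus V$.
\item By left invariance, the centre $\tau(g_n^*)\xi$ lies in $B_{\varepsilon_n}(g_n^*)\setminus B_{\varepsilon_{n+1}}(g_n^*)$. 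It is not in $V_{n+1}$, so by (3-($n+1$)) it lies in $B_{\varepsilon_n}(g_n^*)\setminus\bigcup_{l\le n+1}H_l$.
\item The covering property at the next step covers this set by the balls $B_{\varepsilon_{n+1}}(g_{n+1,i})$. Hence $\tau(g_{n+1,i})\in B_{\varepsilon_{n+1}}(\tau(g_n^*)\xi)$ for some $i$.
\item But $\mathfrak{W}_{n+2}(g_{n+1,i})=0$ gives $\tau(g_{n+1,i})\in V_{n+2}\subseteq V$, a contradiction.
\end{itemize}
The translated $1$-ball has exactly the radius of the covering balls, so it must contain one of their $0$-centres. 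No global description of $W$ near $\xi^*$, no measure count and no use of $\xi^*$ is needed. Your ``first alternative'' points in this direction, but it only works with these radii and this choice of scale.
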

\begin{proof}
	Let $\xi\in H$ such that $W\xi = W$.
	We show by induction that $d_H(\xi,1_H) < \varepsilon_n$ for all $n\in\N_0$.\\
	The base case ($n=0$) is clear since $B_{\varepsilon_0}(1_H) = B_{\varepsilon_0}(g_0^*) = H$.\\
	\underline{$n\to n+1$: }
	Observe that $B_{\varepsilon_{n+1}}(g_n^*) \tm W_0\tm W$.
	Hence, the assumption that $W\xi=W$ implies
	\begin{equation}\label{eq: shifted ball not in V}
	B_{\varepsilon_{n+1}}(g_n^*\xi) \tm W \tm W_\gamma \tm H\setminus V\tm H\setminus V_{n+1}.
	\end{equation}
	Suppose for a contradiction that $d_H(\xi,1_H)\geq \varepsilon_{n+1}$.
	By the induction hypothesis we thus have $\varepsilon_{n+1}\leq d_H(\xi,1_H) < \varepsilon_n$, so that $\tau(g_n^*)\xi \in B_{\varepsilon_n}(g_n^*)\setminus B_{\varepsilon_{n+1}}(g_n^*)$
	Together with \eqref{eq: shifted ball not in V} we even have
	$$ \tau(g_n^*) \xi \in B_{\varepsilon_n}(g_n^*) \setminus (B_{\varepsilon_{n+1}}(g_n^*)\cup V_{n+1}) \overset{(3-(n+1))}{=}B_{\varepsilon_n}(g_n^*)\setminus \bigcup_{l=1}^{n+1} H_l.$$
	By choice of the $g_{n+1,i}$ (see \eqref{eq: covering property}) we have $\tau(g_n^*)\xi \in B_{\varepsilon_{n+1}}(g_{n+1,i})$ for some $i$, i.e.\ $\tau(g_{n+1,i}) \in B_{\varepsilon_{n+1}}(\tau(g_n^*)\xi)$.
	However, recall that in the inductive construction of $W_\gamma$ we chose the word $\mathfrak{W}_{n+2}$ to be 0 on all the $g_{n+1,i}$.
	Hence, $\tau(g_{n+1,i})\in V_{n+2}\tm V$, which contradicts $B_{\varepsilon_{n+1}}(\tau(g_n^*)\xi)\tm H\setminus V$.
	As $\varepsilon_n \to 0$, we have shown that $\xi = 1_H$ as desired.
\end{proof}

We can now collect all results of this section and wrap up the proof of Theorem~\ref{thm: realize entropy intro}.
\begin{proof}[Proof of Theorem~\ref{thm: realize entropy intro}]
	Let $h \in [0,\log 2)$ be arbitrary and let $(H,\tau)$ be an arbitrary metric group compactification of $G$.
	We let $\gamma \in [0,1)$ be such that $h = \gamma \cdot \log 2$.
	By \eqref{eq: entropy estimate} we have $\htop(\ol{O_G(x_{W_\gamma})},G)\geq h$.
	Let $t\mapsto W(t)$ denote the path between $W_0$ and $W_\gamma$ from above.
	Corollary~\ref{cor: entropy continuous} and Theorem~\ref{thm: generic windows path connected} imply that the mapping $[0,1] \to [0,\infty), \: t \mapsto \htop(\ol{O_G(x_{W(t)})},G)$ is continuous.
	Hence, by the intermediate value theorem there exists $t\in [0,1]$ such that $\htop(\ol{O_G(x_{W(t)})},G) = h$.
	Due to Lemma~\ref{lem: properfication} and Lemma~\ref{lem: window on path irredundant}, there exists a proper, generic and irredundant window $W^*(t)$ such that $x_{W^*(t)} = x_{W(t)}$.
	Therefore, $x_{W(t)}$ is almost automorphic over $H$ by Theorem~\ref{thm: 1-1 windows almost automorphic}.
	This concludes the proof.
\end{proof}

\bibliographystyle{alpha}

\begin{thebibliography}{DDSMS99}

\bibitem[BFK97]{BlFoKu1997Cellular}
F.~Blanchard, E.~Formenti, and P.~K{\r u}rka.
\newblock Cellular automata in the {C}antor, {B}esicovitch, and {W}eyl
  topological spaces.
\newblock {\em Complex Systems}, 11(2):107--123, 1997.

\bibitem[BHP18]{BjHaPo2018AperiodicOrder}
M.~Bj\"orklund, T.~Hartnick, and F.~Pogorzelski.
\newblock Aperiodic order and spherical diffraction, {I}: auto-correlation of
  regular model sets.
\newblock {\em Proc.\ Lond.\ Math.\ Soc.}, 116(4):957--996, 2018.

\bibitem[BJL16]{BaakeJagerLenz2016ToeplitzAndModel}
M.~Baake, T.~J\"ager, and D.~Lenz.
\newblock {T}oeplitz flows and model sets.
\newblock {\em Bull. Lond. Math. Soc.}, 48:691--698, 2016.

\bibitem[BLM07]{BaLeMo2007Model}
M.~Baake, D.~Lenz, and R.~Moody.
\newblock Characterization of model sets by dynamical systems.
\newblock {\em Ergodic Theory and Dynamical Systems}, 27(2):341--382, 2007.

\bibitem[CBCG24]{CecchiCortezGomez2024Invariant}
P.~Cecchi~Bernales, M.~I. Cortez, and J.~Gómez.
\newblock Invariant measures of {T}oeplitz subshifts on non-amenable groups.
\newblock {\em Ergodic Theory and Dynamical Systems}, 44:3186--3215, 2024.

\bibitem[CDGJ25]{CortezDrewloGomezJager2025Model}
M.~I. Cortez, J.~Drewlo, J.~Gómez, and T.~J{\"a}ger.
\newblock Cut and project schemes and {T}oeplitz subshifts.
\newblock {\em In preparation}, 2025.

\bibitem[CFMM97]{CaFoMaMa1997Besicovitch}
G.~Cattaneo, E.~Formenti, L.~Margara, and J.~Mazoyer.
\newblock A shift-invariant metric on {${S}^{\mathbb{Z}}$} inducing a
  nontrivial topology.
\newblock In {\em MFCS 1997}, volume 1295 of {\em LNCS}, pages 179--188, 1997.

\bibitem[CP08]{CortezPetite2008Odometers}
M.~I. Cortez and P.~Petite.
\newblock {$G$}-odometers and their almost one-to-one extensions.
\newblock {\em J. Lond. Math. Society}, 78:1--20, 2008.

\bibitem[CP14]{CortezPetite2014Invariant}
M.~I. Cortez and P.~Petite.
\newblock Invariant measures and orbit equivalence for generalized {T}oeplitz
  subshifts.
\newblock {\em Groups, Geom., Dyn.}, 8:1007--1045, 2014.

\bibitem[DDSMS99]{DixonDusautoyMannSegal2003Analytic}
J.~D. Dixon, M.~P.~F. Du~Sautoy, A.~Mann, and D.~Segal.
\newblock {\em Analytic pro-$p$ groups (2nd ed.)}.
\newblock Cambridge University Press, 1999.

\bibitem[DI88]{DoIw1988QuasiUniform}
T.~Downarowicz and A.~Iwanik.
\newblock Quasi-uniform convergence in compact dynamical systems.
\newblock {\em Studia Mathematica}, 89:11--25, 1988.

\bibitem[DJL26]{DrJaLe2025Model}
J.~Drewlo, T.~J\"ager, and D.~Lenz.
\newblock A survey on model sets.
\newblock {\em In preparation}, 2026.

\bibitem[Dow11]{Do2011Entropy}
T.~Downarowicz.
\newblock {\em Entropy in Dynamical Systems}.
\newblock Cambridge University Press, 2011.

\bibitem[EG60]{ElGo1960Homomorphisms}
R.~Ellis and W.~H. Gottschalk.
\newblock Homomorphisms of transformation groups.
\newblock {\em Trans.\ Amer.\ Math.\ Soc.}, 94:258--271, 1960.

\bibitem[Fal90]{Falconer1990Fractal}
K.~J. Falconer.
\newblock {\em Fractal geometry : {M}athematical foundations and applications}.
\newblock Wiley, 1990.

\bibitem[FG20]{FuGr2020Constant}
G.~Fuhrmann and M.~Gr{\"o}ger.
\newblock Constant length substitutions, iterated function systems and amorphic
  complexity.
\newblock {\em Math.\ Z.}, 295(4):1385--1404, 2020.

\bibitem[FGJ16]{FuGrJa2016Amorphic}
G.~Fuhrmann, M.~Gr{\"o}ger, and T.~J{\"a}ger.
\newblock Amorphic complexity.
\newblock {\em Nonlinearity}, 29(2):528--565, 2016.

\bibitem[FGJK23]{FuGrJaKw2023Amorphic}
G.~Fuhrmann, M.~Gr{\"o}ger, T.~J{\"a}ger, and D.~Kwietniak.
\newblock Amorphic complexity of group actions with applications to
  quasicrystals.
\newblock {\em Trans.\ Amer.\ Math.\ Soc.}, 376(4):2395--2418, 2023.

\bibitem[FGJO21]{FuGlJaOe2021IrregularModel}
G.~Fuhrmann, E.~Glasner, T.~J\"ager, and C.~Oertel.
\newblock Irregular model sets and tame dynamics.
\newblock {\em Trans.\ Amer.\ Math.\ Soc.}, 374(5):3703--3734, 2021.

\bibitem[HM20]{HoMo2020CompactGroups}
K.~H. Hofmann and S.~A. Morris.
\newblock {\em The structure of compact groups -- a primer for the student -- a
  handbook for the expert}.
\newblock De Gruyter, 2020.

\bibitem[JLO19]{JaLeOe2019ModelPositive}
T.~J\"ager, D.~Lenz, and C.~Oertel.
\newblock Model sets with positive entropy in euclidean cut and project
  schemes.
\newblock {\em Ann.\ Sci.\ ENS}, 52(5):1073--1106, 2019.

\bibitem[KK25]{KasjanKeller2025Besicovitch}
S.~Kasjan and G.~Keller.
\newblock Besicovitch covering numbers for $\mathcal{B}$-free and other shifts.
\newblock {\em arXiv preprint
  \href{https://doi.org/10.48550/arXiv.2505.09253}{arXiv:2012.01396}}, 2025.

\bibitem[KR19]{KeRi2019WeakModel}
G.~Keller and C.~Richard.
\newblock Periods and factors of weak model sets.
\newblock {\em Israel J.\ Math.}, 229:85--132, 2019.

\bibitem[Kri07]{Krieger2007Toeplitz}
F.~Krieger.
\newblock Sous-décalages de {T}oeplitz sur les groupes moyennables
  résiduellement finis.
\newblock {\em J. Lond. Math. Society}, 75:447--462, 2007.

\bibitem[Kul24]{Ku2024Amorphic}
M.~Kulczycki.
\newblock Amorphic complexity can take any nonnegative value.
\newblock {\em J.\ Dynam.\ Differential Equations}, 36:1115--1121, 2024.

\bibitem[LP03]{LaPl2003Repetitive}
J.~C. Lagarias and P.~A.~B. Pleasants.
\newblock Repetitive {D}elone sets and quasicrystals.
\newblock {\em Ergodic Theory and Dynamical Systems}, 23:831--867, 2003.

\bibitem[LS03]{LubotzkySegal2003SubgroupGrowth}
A.~Lubotzky and D.~Segal.
\newblock {\em Subgroup Growth}.
\newblock Birkh{\" a}user Basel, 2003.

\bibitem[LS18]{LackaStraszak2018QuasiUniform}
M.~\L{}ącka and M.~Straszak.
\newblock Quasi-uniform convergence in dynamical systems generated by an
  amenable group action.
\newblock {\em J.\ Lond.\ Math.\ Soc.}, 98:687--707, 2018.

\bibitem[Mey72]{Me1972Algebraic}
Y.~Meyer.
\newblock {\em Algebraic Numbers and Harmonic Analysis}.
\newblock North-Holland Publishing Co., 1972.

\bibitem[Pie84]{Pier1984Amenable}
J.-P. Pier.
\newblock {\em Amenable Locally Compact Groups}.
\newblock Wiley, 1984.

\bibitem[Sch00]{Sc2000GeneralizedModel}
M.~Schlottmann.
\newblock Generalized model sets and dynamical systems.
\newblock In {\em Directions in Mathematical Quasicrystals}, volume~13 of {\em
  CRM Monograph Series}, pages 143--159. American Mathematical Society, 2000.

\end{thebibliography}

\end{document}